\definecolor{Green}{rgb}{0,0.5,0}
\definecolor{Red}{rgb}{0.7,0,0}
\newcommand{\gt}[1]{\color{Green}{#1 }\color{black}}
\DeclareSymbolFont{newfont}{OML}{cmm}{m}{it}
\DeclareMathSymbol{\Epsilon}{3}{newfont}{15}
\DeclareMathSymbol{\ro}{3}{newfont}{37}
\DeclareMathOperator{\sing}{sing}
\DeclareMathOperator{\reg}{reg}
\DeclareMathOperator{\rank}{rank}
\newtheorem{proposition}{Proposition}[section]
\newtheorem{thm}[proposition]{Theorem}
\newtheorem{lemma}[proposition]{Lemma}
\newtheorem{corol}[proposition]{Corollary}
\theoremstyle{definition}
\newtheorem{example}[proposition]{Example}
\newtheorem{remark}[proposition]{Remark}
\newcommand{\CC}{\mathbb{C}}
\newcommand{\RR}{\mathbb{R}}
\newcommand{\DD}{\mathbb{D}}
\newcommand{\PP}{\mathbb{P}}
\newcommand{\be}{\begin{equation}}
\newcommand{\ee}{\end{equation}}
\newcommand{\V}{\mathcal{V}}
\newcommand{\A}{\mathcal{A}}
\newcommand{\B}{\mathcal{B}}
\newcommand{\rfo}{\ \ro\!\!(f^{-1}\!\!\!,f(\!\mathcal{V}\!)\!)}
\newcommand{\rfa}{\ \ro\!\!(f^{-1}\!\!\!,f(\!\mathcal{A}\!)\!)}
\newcommand{\rfb}{\ \ro\!\!(f^{-1}\!\!\!,\mathcal{B})}
\newcommand{\rf}{\ \ro\!\!(f\!,\!\mathcal{V})}
\newcommand{\rfA}{\ \ro\!\!(f\!,\mathcal{A})}
\newcommand{\lfA}{\mathcal{L}_\infty(f|\mathcal{A})}
\newcommand{\lfV}{\mathcal{L}_\infty(f|\mathcal{V})}
\begin{document}

\title[Green function on algebraic sets]{Łojasiewicz exponent and pluricomplex Green functions \\ on algebraic sets}
\author[L. Bia\l as-Cie\.{z}]{Leokadia Bialas-Ciez }
\address{Faculty of Mathematics and Computer Science, Jagiellonian University, {\L}ojasiewicza~6, 30-348 Krak\'ow, Poland}
\email{leokadia.bialas-ciez@uj.edu.pl}
\author[M. Klimek]{Maciej Klimek}
\address{Department of Mathematics, Uppsala University,
	P.O.Box 480, 751-06 Uppsala, Sweden}
\email{maciej.klimek@math.uu.se}

\date{\today}
\subjclass[2020]{Primary {32U35}, Secondary {32U05, 41A17, 41A10.}}
\keywords{pluricomplex Green function, Łojasiewicz exponent,  analytic set, algebraic set, proper map, plurisubharmonic function, Bernstein-Walsh inequality.}
\thanks{The work of the first author was partially supported by the National Science Centre, Poland, grant No. 2017/25/B/ST1/00906.}

\begin{abstract}  The results obtained in this article form a necessary foundation for any prospective development of an extension of the pluripotential theory to algebraic sets. We focus on the key concept of pluricomplex Green function on such sets, as well as its interplay with different growth characterizations for holomorphic mappings. 
In particular, if $f$ is a proper holomorphic mapping between two algebraic sets, then given a compact set $K$ in the range of $f$, we show how to estimate the pluricomplex Green functions of $K$ and of $f^{-1}(K)$ in terms of each other, the Łojasiewicz exponent of $f$ and the growth exponent of $f$. This result leads to explicit examples of pluricomplex Green functions on algebraic sets. We also present an enhanced version of the Bernstein-Walsh polynomial inequality specific to algebraic sets. This article provides a theoretical framework for future investigations of the rate of polynomial approximation of holomorphic functions on algebraic sets in the style of Bernstein-Walsh-Siciak theorem. 
\end{abstract}

\maketitle

\section{Introduction}

Given a non-polar compact set $K$ in the complex plane, one can define the Green function with pole at infinity associated with this set and harmonic in the unbounded complement of $K$. This concept has been widely used in univariate complex analysis for a very long time. The main areas of complex analysis in which the Green functions have played a crucial role are the approximation theory, constructions of conformal mappings and the classical potential theory (see e.g. \cite{Saf10} and \cite{Ran95}).     

\vskip 1mm

The pluricomplex Green functions are generalizations of the one dimensional notion to higher dimensions. Ever since the highly influential paper by Siciak \cite{Sic62} was published, the pluricomplex Green functions have continued to gain importance, particularly in the approximation theory and general study of plurisubharmonic functions. It is necessary to mention here the Siciak-Zaharjuta formula that gives a relation between the pluricomplex Green function and the extremal Siciak function defined by means of polynomials (see (\ref{Siciak-Za}) below). The formula leads to the Bernstein-Walsh polynomial inequality and to the well known Bernstein-Walsh-Siciak approximation theorem. Moreover, beginning with the fundamental work of Bedford and Taylor (see particularly \cite{BT76} and \cite{BT82}) in the late 1970s and early 1980s, the study of the pluricomplex Green functions has been linked to the non-linear potential theory built around the complex Monge-Ampère operator, and commonly referred to as the pluripotential theory (see e.g. \cite{Kl} and \cite{Kol05}). 

\vskip 1mm

Most of the research concerning the pluricomplex Green functions, and for that matter plurisubharmonic funcions in general, has been done in $\mathbb{C}^N$ or on complex manifolds. The papers by Sadullaev \cite{Sad83} from 1983 and Demailly \cite{Dem} from 1985 opened the door to the study of plurisubharmonic  functions on analytic sets. Over the years, this subject has been investigated by many authors (see e.g. \cite{BMT}, \cite{Col}, \cite{CGZ}, \cite{FN}, \cite{HM15}, \cite{Wik05}, \cite{Ze91}, \cite{Ze00}). 
If pluripotential theory is to be extended from $\mathbb{C}^N$ to algebraic sets, then the partial differential equations aspects of that theory become problematic, mainly because of singularities of algebraic sets. Instead, one has to focus of the approximation theory side of pluripotential theory. A fundamental framework serving this goal can be created by taking a close look at plurisubharmonic functions on analytic sets in general, by investigating properties of pluricomplex Green functions on algebraic sets and by analyzing different characterizations of growth of holomorphic mappings between algebraic sets, with particular attention paid to the Łojasiewicz exponents. Primarily, one has to scrutinize the interplay between all these notions, and this is the main objective of the results obtained in this article. 

\vskip 1mm
As a convenient vehicle for examining the interactions between all the concepts described in the previous paragraph, we will use a generalization of an invariance result for pluricomplex Green functions. 
In 1982 in \cite{Kl82a} and \cite{Kl82b} it was shown that, given a compact set in $\mathbb{C}^N$ and a proper polynomial mapping $f:\mathbb{C}^N\longrightarrow\mathbb{C}^N$, one can estimate the pluricomplex Green functions of $K$ and $f^{-1}(K)$ in terms of each other, and -- in certain cases -- even express one of these functions in terms of the other one (see Section 3 below for a precise statement). The result was new even in one variable. Only very special cases exist in older literature, like Fekete's considerations of the logarithmic capacity from 1930 (see \cite{Fek30})  or Walsh's construction of the Green function for some filled-in leminiscates from 1956 (see \cite{Wal56}). Also, in complex dynamics, there exists a well known formula for invariant attracting basins of infinity (see e.g. \cite[Thm. 6.5.3]{Ran95}).    

\vskip 1mm

The general case, in any dimension, apart from providing a plethora of explicit examples of pluricomplex Green functions and concrete polynomial estimates of Bernstein-Walsh type, has had far reaching consequences. They were mainly linked to complex dynamics, producing -- in particular -- completely new types of filled-in Julia sets, and to the theory of set-valued analytic functions (see e.g.
\cite{KlKo03}, \cite{KlKo06} and \cite{KlKo18}). Even in the one dimensional case, it yielded a new interpretation of the classical filled-in Julia sets as fixed points of set-valued contractions \cite{Kl95}. It can be expected that the main theorem of this paper will have similar consequences on algebraic sets to those of its version in $\CC^N$. 

\vskip 1mm

The paper is organized as follows. In the section following the introduction, we have gathered basic information concerning analytic and algebraic varieties. This paper is addressed mainly to complex analysts whose background in complex analytic geometry might be limited. Consequently, we felt that it would be prudent to include, also in other sections, some elementary details pertaining to that subject. In the subsequent section, we list briefly a few fundamental definitions of the pluripotential theory, followed by a precise statement of the theorem we intend to generalize. The somewhat longer 4th section deals with the notions of plurisubharmonicity and pluripolarity on analytic sets. The section ends with a technical lemma about the push-forwards of plurisubharmonic functions under proper holomorphic mappings between analytic sets. The ensuing 5th section is devoted to the pluricomplex Green functions on algebraic sets. In particular, we give a proof of a theorem about the relationship between the pluricomplex Green functions on algebraic sets and on the ambient vector space. This property is known, but it is sometimes incorrectly stated in some articles. Moreover, the proof given in \cite{Sad83}, where the theorem was first formulated, works only for algebraic curves but not for higher dimensional algebraic sets.
The penultimate 6th section examines the interrelationship between different descriptors of the growth of holomorphic functions on analytic sets. More specifically, given a holomorphic function $f$, we look at the growth and Łojasiewicz exponents of $f$, as well as the order of $f$ defined as the growth exponent of the multifuncion $f^{-1}$. Using the growth exponent we give an alternative description of the Siciak extremal function on algebraic sets as well as an enhanced version of the Bernstein-Walsh inequality.
The interplay between the growth indicators considered in Section 6 is crucial for the formulation and understanding of the main results of the paper, which are presented in the final section. The article closes with a few examples of applications of the main theorem. 

\section{Basic facts about analytic and algebraic sets}

Let $\Omega$ be a complex manifold of dimension $N$, e.g. a domain in $\mathbb{C}^N$. A set \ $\mathcal{V}$ \ is said to be \emph{analytic} in $\Omega$ if for every point $z\in \Omega$ there exist an open neighborhood  $U\subset \Omega$ of $z$, a positive integer $k$ and a holomorphic mapping $f=(f_1,...,f_k):U \rightarrow \CC^k$ such that $U\cap\mathcal{V}=\{w\in U \: : \: f_1(w)=...=f_k(w)=0\}$. If the condition holds only for each $z\in \mathcal{V}$, we say that the set $\mathcal{V}$ is {\it locally analytic}.
It follows from these definitions that $\V$ is analytic in $\Omega$ if and only if it is locally analytic and closed in $\Omega$.

\vskip 1mm
A point $z\in \mathcal{V}$ is \emph{regular} (\emph{of dimension $\ell$) of} $\mathcal{V}$ if $\V$ is a complex submanifold of $\Omega$ (of dimension $\ell$) in a neighborhood of $z$. Equivalently, the germ $\V_z$ can be written as $\{w \: : \:  g(w)=0\}_z$, where $g$ is such a holomorphic mapping from a neighborhood of $z$ to $\mathbb{C}^{N-\ell}$ that $g(z)=0$, and 
the differential $d_zg$ is surjective. The dimension of $\mathcal{V}$ at $z$ is denoted by dim$_z\mathcal{V}$. By $\reg\mathcal{V}$ we denote the collection of all regular points of $\mathcal{V}$. The {\it dimension of} $\mathcal{V}$ is defined as \ dim$\,\mathcal{V}:=\max\{{\rm dim}_z\mathcal{V} \: : \: z\in \reg\mathcal{V}\}$. The set of {\it singular points}  of $\mathcal{V}$ (denoted by $\sing\mathcal{V}$) is the set $\mathcal{V}\setminus\reg\mathcal{V}$. 
We say that $\mathcal{V}$ is of {\it pure dimension} $\ell$ \ if \ dim$_z\mathcal{V} =\ell$ \ for each point $z\in \reg\mathcal{V}$, i.e. the dimension of each 
component of $\mathcal{V}$ is $\ell$. Such sets are said to be {\it pure-dimensional} or {\it of constant dimension}. For a pure-dimensional analytic set $\mathcal{V}$, the set $\reg\mathcal{V}$ is a submanifold of $\Omega$, while $\sing\mathcal{V}$ is an analytic set in $\Omega$, which is closed and nowhere dense in $\mathcal{V}$. Moreover, $\dim\sing\mathcal{V}<\dim\mathcal{V}$.

\vskip 1mm

An analytic set $\mathcal{V}$ in $\mathbb{C}^N$ is {\it irreducible} if it is not the union of two analytic sets properly contained in $\mathcal{V}$. In this case, $\mathcal{V}$ is connected and pure-dimensional. 
It is known that an analytic set is irreducible precisely when its set of regular points is connected. 
Every connected locally irreducible analytic set is irreducible. The converse is usually not true. 

\vskip 1mm
If an analytic set in $\CC^N$ is compact, then it is finite and hence of dimension zero. Unless it is a singleton, it is also reducible.

\vskip 1mm

A subset $\mathcal{A}$ of $\CC^N$ is an {\it algebraic set} if $\mathcal{A}=\{z\in \CC^N\: : \: p_1(z)=...=p_k(z)=0\}$ for some polynomials $p_1,...,p_k\in \CC[z]$. 
Obviously, every algebraic set is analytic in $\CC^N$. An algebraic set $\A\subset\CC^N$ is {\it irreducible} if it is not the union of two algebraic sets properly contained in $\mathcal{A}$. This is the case if and only if $\A$ is irreducible as an analytic set, see e.g. \cite[VII.11.1]{Loj}.

\vskip 1mm

A linear space $X$ is called a \emph{Sadullaev subspace of $\CC^N$} for an analytic set $\mathcal{V}\subset\mathbb{C}^N$ if there exists a linear complement $Y$ of $X$ to $\CC^N$ such that
\begin{equation}\label{Sadullaev}
\mathcal{V} \subset \{ x+y\in \CC^N\: : \: x\in X, \ y\in Y, \ \|x\|\le C(1+\|y\|)\}
\end{equation} 
for some positive constant $C$. Unless otherwise specified, the symbol $\|\cdot\|$ will stand for the Euclidean norm, but in fact one can choose  any norm  in $\CC^N$ instead. If (\ref{Sadullaev}) holds, the natural projection $\pi: \mathcal{V}\ni x+y  \mapsto y\in Y$ is a proper mapping. Moreover, if dim$\, X= N- \ $dim$\mathcal{V}$, then the projection $\pi$ is surjective, see \cite[VII.7.1]{Loj}. According to theorems due to Rudin and Sadullaev (see e.g. \cite[VII.7.4]{Loj}), we have the following
\
\begin{thm} \label{Rudin_Sadullaev}
	An analytic set $\mathcal{V}\subset \CC^N$ of pure dimension $\ell$ is  algebraic if and only if there exists an $(N-\ell)$-dimensional Sadullaev space for $\mathcal{V}$.  Additionally, if $X$, $Y$ are linear spaces in $\CC^N$, $X\oplus Y=\CC^N$, $\V\subset X\times Y$ is a pure-dimensional analytic set and 
	\be \V \subset \{ (x,y)\in X\times Y\: : \: \|x\|\le C(1+\|y\|^c)\} \label{Sadullaev_coords}\ee
	for some $C,c>0$, then $\V$ is algebraic.
\end{thm}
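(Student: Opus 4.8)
The plan is to isolate the core fact
\emph{that if $X\oplus Y=\CC^N$ with $\dim Y=\ell$ and $\V\subset X\times Y$ is a purely $\ell$-dimensional analytic set satisfying $\|x\|\le C(1+\|y\|^c)$ on $\V$ for some $C,c>0$, then $\V$ is algebraic},
and to derive from it both the ``existence of a Sadullaev space $\Rightarrow$ algebraic'' half of the first assertion (take $c=1$) and the whole second assertion; the ``algebraic $\Rightarrow$ existence of a Sadullaev space'' half will be handled separately. For the second assertion one first observes that the growth bound makes the projection $\pi\colon\V\ni x+y\mapsto y\in Y$ proper, each of its fibres being a bounded, hence finite, analytic subset of a copy of $X$; thus $\dim\V=\dim\pi(\V)$, and under the normalization $\dim Y=\ell=\dim\V$ (equivalently $\dim X=N-\ell$ — the case already in force in the Sadullaev-space setting, and needed for the conclusion) one is precisely in the situation of the core fact.

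For the direction ``$\V$ algebraic $\Rightarrow$ a Sadullaev space exists'', let $\V\subset\CC^N$ be algebraic and purely $\ell$-dimensional. The case $\ell=0$ is immediate ($\V$ is finite, so $X=\CC^N$ works). Assuming $\ell\ge1$, I would pass to the projective closure $\overline{\V}\subset\PP^N$, which is again purely $\ell$-dimensional; since $\V$ is unbounded in $\CC^N$, its trace $\V_\infty:=\overline{\V}\cap H_\infty$ on the hyperplane at infinity $H_\infty=\PP^N\setminus\CC^N\cong\PP^{N-1}$ is a proper algebraic subset of $\overline{\V}$, so $\dim\V_\infty\le\ell-1$. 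A dimension count $(N-\ell-1)+(\ell-1)=N-2<N-1$ shows that a generic linear subspace $\PP(X)\subset H_\infty$ with $\dim_\CC X=N-\ell$ is disjoint from $\V_\infty$, hence from $\overline{\V}$. Choosing any linear complement $Y$ of $X$ (necessarily $\dim Y=\ell$), I would conclude by contradiction: if \eqref{Sadullaev} held for no constant, there would be $z_k=x_k+y_k\in\V$ with $\|x_k\|>k(1+\|y_k\|)$; boundedness of the projections onto $X$ and $Y$ yields $\|z_k\|\to\infty$ and $\|y_k\|/\|z_k\|\to0$, so a subsequence of $z_k/\|z_k\|$ converges to a unit vector $w\in X$, whence $[0:w]\in\overline{\V}\cap\PP(X)$, contradicting the choice of $\PP(X)$. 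Hence $X$ is an $(N-\ell)$-dimensional Sadullaev space for $\V$.

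The heart of the argument is the core fact. There $\pi\colon\V\to Y\cong\CC^\ell$ is proper, has finite fibres, and is surjective (its image is a purely $\ell$-dimensional analytic subset of $\CC^\ell$), so it is a $p$-sheeted analytic branched covering for some $p\in\NN$, unramified over the complement of a proper analytic subset $\Lambda\subset\CC^\ell$. Over $\CC^\ell\setminus\Lambda$ each of the $N-\ell$ coordinate functions $x_i$ on $X$ takes $p$ values $\xi_{i,1}(y),\dots,\xi_{i,p}(y)$ on the sheets of $\V$; the individual branches are only locally single-valued, but their elementary symmetric functions $\sigma_{i,1},\dots,\sigma_{i,p}$ are globally defined and holomorphic on $\CC^\ell\setminus\Lambda$. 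The growth bound forces $|\sigma_{i,s}(y)|\le\mathrm{const}\cdot(1+\|y\|)^{cp}$ there, so $\sigma_{i,s}$ is locally bounded near the thin set $\Lambda$, extends holomorphically to $\CC^\ell$ by Riemann's extension theorem, and — being entire of polynomial growth — is a polynomial. Consequently $P_i(T,y):=\prod_{s=1}^p\bigl(T-\xi_{i,s}(y)\bigr)$ is monic in $T$ with coefficients in $\CC[y]$ and $P_i(x_i,y)\equiv0$ on $\V$ (it vanishes on the dense set $\V\setminus\pi^{-1}(\Lambda)$). The set $W:=\{z\in\CC^N:P_i(x_i,y)=0,\ i=1,\dots,N-\ell\}$ is then algebraic, contains $\V$, and has dimension $\ell$ since its projection to $\CC^\ell$ is finite. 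Finally, every irreducible component $V'$ of $\V$ is purely $\ell$-dimensional and lies in some irreducible component $W'$ of $W$, which is therefore $\ell$-dimensional as well; since $W'$, being an irreducible algebraic set, is irreducible as an analytic set \cite[VII.11.1]{Loj}, and an $\ell$-dimensional analytic subset of the irreducible $\ell$-dimensional analytic set $W'$ must equal $W'$, we get $V'=W'$. Thus $\V$ is a finite union of irreducible components of $W$, hence algebraic.

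The step I expect to require the most care is this last identification — that $\V$ is not merely contained in, but actually equals, a union of components of $W$ — which leans on purity of dimension together with the coincidence of algebraic and analytic irreducibility; the two ``entire-function'' steps (removable-singularity extension of the $\sigma_{i,s}$ across the ramification locus, and polynomial growth forcing polynomiality) also need attention but are classical, and the ``$\V$ algebraic $\Rightarrow$ Sadullaev space'' direction is comparatively soft, resting only on elementary projective linear algebra and a compactness argument.
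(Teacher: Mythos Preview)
The paper does not give its own proof of this theorem; it is stated as a known result due to Rudin and Sadullaev, with a reference to \cite[VII.7.4]{Loj}. Your argument is the classical one found in those sources and is correct: the ``algebraic $\Rightarrow$ Sadullaev space'' direction via the projective closure and a dimension count in $H_\infty$, and the converse via the branched-covering structure of $\pi:\V\to\CC^\ell$, elementary symmetric functions of the sheets, Riemann extension across the ramification locus, and the Liouville-type step forcing polynomiality of the coefficients. The final identification of $\V$ with a union of irreducible components of the auxiliary algebraic set $W$ is handled properly via purity of dimension and the equivalence of algebraic and analytic irreducibility.

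You are also right to flag that the second assertion, as literally formulated in the paper, omits the hypothesis $\dim Y=\dim\V$; without it the claim is false (take $X=\{0\}$, $Y=\CC^2$, and $\V=\{(y_1,y_2):y_2=e^{y_1}\}$, for which the growth bound is vacuous but $\V$ is not algebraic). The cited source does carry this dimension hypothesis, so your reading --- restricting the core fact to $\dim Y=\ell$ --- is the intended one, and your proof is complete under it.
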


To give an example, consider an algebraic hypersurface $\A=\{(x,y)\in\CC^N\: : \:f(x,y)=0\}$, where $f$ is the polynomial
\begin{equation*} 
f(x,y)=x^n+\sum\limits_{j=0}^{n-1} a_j(y)\: x^j, \ \ \ \ \ \ x\in \CC, \ \ y\in \CC^{N-1}, \ \ a_j\in \CC[y].
\end{equation*}
Then \ $\A\subset \{(x,y)\in \CC^N\: : \: |x|\le C(1+\|y\|^k)\}$, where \ $k:=\max_j \: {\rm deg}\, a_j$, \ $C=1+\sum\limits_{j=0}^{n-1}\|a_j\|$, 
\ $\|b\|:=\sum\limits_{|\alpha|\le \ell} |b_\alpha|$ \ for \ $b(y)=\sum\limits_{|\alpha|\le \ell} b_\alpha y^\alpha$,\ and
$\|y\|=\max_j|y_j|$ for $y=(y_1,\ldots,y_{N-1})\in\CC^N$.
 
\vskip 4mm

\section{Basic facts in pluripotential theory}

Let $D$ be an open set in $\CC^N$ and \ $u:D\rightarrow [-\infty, \infty)$ \ be an upper semicontinuous function on $D$, not identically $-\infty$ on any connected component of $D$. The function $u$ is {\it plurisubharmonic in $D$} and we write $u\in PSH(D)$, if for each $a\in D$ and $b\in \CC^N$, the function \ $t\mapsto u(a+bt)$ \ is subharmonic or identically equal to $-\infty$ on every connected component of the set $\{t\in\CC\,:\,a+bt\in D\}$. An equivalent definition says that $u\in PSH(D)$ if \ $u\circ h$ \ is subharmonic or identically $-\infty$ on the open unit disk $\DD$ in $\CC$ for every holomorphic function \ $h:\DD\rightarrow D$. \ Let $u^*$ denote the standard upper
regularization of $u$, i.e. \ $u^*(z)=\limsup\limits_{D\ni w\rightarrow z} u(w)$, \ $z\in D$.     

A plurisubharmonic function composed with a holomorphic mapping is plurisubharmonic. 
More precisely, for open sets $D_1$ and $D_2$ in $\CC^{N}$ and $\CC^{M}$, respectively, if \ $u\in PSH(D_2)$ \ and \ $\varphi:D_1 \rightarrow D_2$ \ is holomorphic, then \ $u\circ \varphi\in PSH(D_1)$.

\vskip 1mm

The concept of plurisubharmonicity extends in a natural way to complex manifolds via local charts. In particular, this is so in the case of complex submanifolds of $\CC^N$, irrespective whether they are closed or not. However, the situation becomes a little more complicated on analytic sets due to the presence of singularities. We will address this issue in the next section.

\vskip 1mm

Let $K$ be a subset of $\CC^N$. The set $K$ is said to be {\it pluripolar} if, for every $z\in K$, there exist an open neighborhood \ $U\subset \CC^N$ of $z$ 
and a function \ $u\in PSH(U)$ \ such that \ $u\equiv-\infty$ \ on $K\cap U$. The set $K$ is {\it non-pluripolar} if it is not pluripolar. For $N=1$ we say that $K$ is {\it polar} or, respectively, {\it non-polar}.

\vskip 1mm

The {\it pluricomplex Green's function} of a compact set $K\subset\CC^N$
is defined by 
\be \label{V_K} V_K(z)
:= \sup\{ u(z)\: : \: u\in \mathcal{L}(\CC^N) \ \ {\rm and} \ \ u\le 0
\ {\rm on} \ K \} \ \ \ \ {\rm for} \ \
z\in\mathbb{C}^N,\ee 
where $\mathcal{L}(\CC^N)$ is the
{\it Lelong class} of plurisubharmonic functions in $\mathbb{C}^N$
of logarithmic growth at the infinity, i.e.
\[\mathcal{L}(\CC^N)\!:=\!\{u \in PSH(\mathbb{C}^N) \: :
\: u(z)-\log(1+ \|z\|) \le c \ \ {\rm in} \ \ \CC^N \ \ {\rm for \ some} \ \ c=c(u) \}.\] 
By Siciak's theorem, either \ $V_K^*\in \mathcal{L}(\CC^N)$ \ 
or \ $V_K^*\equiv +\infty$. \ These properties 
are equivalent to non-pluripolarity  
or pluripolarity of $K$, respectively. For a non-polar set $K\subset \CC$, the function $V_K^*$ coincides with the Green's function
$g_{K}$ of the unbounded component of $\hat{\mathbb{C}}\setminus
K$ with logarithmic pole at infinity (as usual
$\hat{\mathbb{C}}=\mathbb{C}\cup\{\infty\}$).

The function $V_K$ is
closely related to polynomials in view of the following Siciak-Zaharjuta
formula: 
\be \label{Siciak-Za} V_K (z) \ = \ \log \Phi_K
(z) \ \ \ \ \mbox{for any compact set} \ \ K\subset\CC^N \ \ \mbox{and} \ \ z\in \mathbb{C}^N,\ee 
where $\Phi_K$ is
the {\it Siciak extremal function} defined by \[ \Phi_K(z) := \,
\sup\left\{|p(z)|^{1/{\rm deg}\, p}  : \: p\in
\CC[z]=\CC[z_1,...,z_N], \ {\rm deg}\, p\ge 1, \ \Vert p\Vert_K \le 1
\right\},\]
where $\Vert\ \cdot\ \Vert_K $ denotes the supremum norm.
\vskip 2mm

We refer to \cite{Kl} for the background information concerning this section.

\vskip 1mm
In the next sections we focus on the counterparts of the above notions and facts, but for analytic and algebraic sets
in place of $\CC^N$. 
The primary goal of the paper is to extend the following invariance theorem for pluricomplex Green functions in $\CC^N$ (see \cite{Kl82a}, \cite{Kl82b} or \cite[Thm. 5.3.1]{Kl}) to pluricomplex Green functions on algebraic sets.

\begin{thm} \label{klasyka}
	If $k,\ell$ are positive integers and \ $f=(f_1,...,f_N): \CC^{N}\rightarrow \CC^{N}$ \ is a holomorphic mapping, then the following conditions are equivalent:
	\vskip 2mm
	\begin{itemize}
		\item[(i)] $f$ is a polynomial mapping of degree at most $\ell$	and \ $\liminf\limits_{\|z\|\rightarrow \infty} \frac{f(z)}{\|z\|^k}>0$, \
		\item[(ii)] $f$ is a proper mapping and for every compact set $K\subset \CC^{N}$ 
			\[	k \: V_{f^{-1}(K)}\ \le \ V_K\circ f \ \le \ \ell \: V_{f^{-1}(K)} \ \ \ \ \ in \ \ \CC^{N}. \]
	\end{itemize}	
	If, in addition, $k=\ell$ then each of two above conditions is equivalent to the following one
	\begin{itemize}
		\item[(iii)]  $f$ is a polynomial mapping, {\rm deg}${\,f_1}=...={\rm deg}\,f_N=\ell$, and
		\be \label{homogeneous} \left(\hat{f_1},...,\hat{f_N}\right)^{-1}(0)=\{0\}, \ee
		where $\hat{f_j}$ denotes the homogeneous part of degree $\ell$ of the polynomial $f_j$. 
	\end{itemize} 	
\end{thm}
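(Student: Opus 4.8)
The plan is to prove the equivalence (i)~$\Leftrightarrow$~(ii) for arbitrary positive integers $k,\ell$ and then, under the additional hypothesis $k=\ell$, the equivalence (i)~$\Leftrightarrow$~(iii); together these give the whole statement. Assume (i). Since a polynomial mapping of degree $\le\ell$ grows at most like $(1+\|z\|)^\ell$, the lower growth bound $\liminf_{\|z\|\to\infty}\|f(z)\|/\|z\|^k>0$ forces $k\le\ell$ and shows that $\|f(z)\|\to\infty$ as $\|z\|\to\infty$; hence $f$ is proper, and since compact analytic subsets of $\CC^N$ are finite (Section~2) the fibres of $f$ are finite, so $f$ is a surjective finite branched covering. (We may then assume $K$ is non-pluripolar, for otherwise $f^{-1}(K)$ is pluripolar as well and both inequalities in (ii) read $+\infty\le+\infty$.) For the upper estimate $V_K\circ f\le\ell\,V_{f^{-1}(K)}$ I would use the Siciak--Zaharjuta formula $V_K=\log\Phi_K$: for $p\in\CC[z]$ with $\deg p\ge1$ and $\|p\|_K\le1$, the polynomial $p\circ f$ satisfies $\deg(p\circ f)\le\ell\deg p$ and $\|p\circ f\|_{f^{-1}(K)}\le1$, so (using $\Phi_{f^{-1}(K)}\ge1$) $|p(f(z))|\le\Phi_{f^{-1}(K)}(z)^{\ell\deg p}$; taking $(\deg p)$-th roots and the supremum over $p$ gives $\Phi_K\circ f\le\Phi_{f^{-1}(K)}^{\,\ell}$, which is the desired inequality.

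The lower estimate $k\,V_{f^{-1}(K)}\le V_K\circ f$ is the heart of the matter. Here I would introduce the push-forward $g(w):=\max\{V_{f^{-1}(K)}^*(z):z\in f^{-1}(w)\}$ of $V_{f^{-1}(K)}^*$ under the branched covering $f$; by the $\CC^N$-case of the push-forward lemma of Section~4, $g$ is plurisubharmonic. Since $V_{f^{-1}(K)}^*\le 0$ on $f^{-1}(K)$ (off a pluripolar set), $g\le0$ on $K$ off a pluripolar set; and the Łojasiewicz-type bound $\|f(z)\|\ge c\|z\|^k$ for $\|z\|\ge R$ shows that $f(z)=w$ forces $\|z\|\le R+(\|w\|/c)^{1/k}$, whence $g(w)\le\tfrac1k\log(1+\|w\|)+C$ for some constant $C$. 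Therefore $k\,g\in\mathcal{L}(\CC^N)$ and $k\,g\le0$ on $K$, so $k\,g\le V_K$; since trivially $V_{f^{-1}(K)}(z)\le V_{f^{-1}(K)}^*(z)\le g(f(z))$ for every $z$, we obtain $k\,V_{f^{-1}(K)}\le V_K\circ f$. I expect this to be the principal obstacle: it rests on the plurisubharmonicity of the $\max$-type push-forward of a plurisubharmonic function under a finite branched covering, and on upgrading the crude growth exponent $\ell$ of $f$ to the sharp exponent $k$ by using that the fibre over $w$ lies in a ball of radius $\sim\|w\|^{1/k}$; the bookkeeping with $V$ versus $V^*$ (removed by the standard regularization arguments) is a further, if routine, nuisance.

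For (ii)~$\Rightarrow$~(i) I would apply (ii) to $K=\overline{B}(0,r)$, for which $V_K=\log^+(\|\cdot\|/r)$. Properness makes $L:=f^{-1}(K)$ compact, say $L\subset\overline{B}(0,\rho)$; moreover $L$ contains the non-empty open set $f^{-1}(B(0,r))$, hence is non-pluripolar, so $V_L(z)\le\log(1+\|z\|)+c$. The upper inequality of (ii) then gives $\log^+(\|f(z)\|/r)\le\ell\log(1+\|z\|)+\ell c$, i.e.\ $\|f(z)\|\le C(1+\|z\|)^\ell$, so each entire component $f_j$ is a polynomial of degree $\le\ell$. The lower inequality of (ii), together with the monotonicity $V_L\ge V_{\overline{B}(0,\rho)}=\log^+(\|\cdot\|/\rho)$, gives $\log(\|z\|/\rho)\le\tfrac1k\log^+(\|f(z)\|/r)$ for $\|z\|\ge\rho$, hence $\|f(z)\|\ge(r/\rho^{\,k})\|z\|^k$ for all large $\|z\|$; thus $\liminf_{\|z\|\to\infty}\|f(z)\|/\|z\|^k\ge r/\rho^{\,k}>0$, which is (i).

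Finally, for the case $k=\ell$, write $f_j=\hat f_j+g_j$, where $\hat f_j$ is the homogeneous part of $f_j$ of degree $\ell$ (possibly $0$) and $\deg g_j\le\ell-1$, and set $\hat f:=(\hat f_1,\dots,\hat f_N)$. If $\hat f^{-1}(0)\ne\{0\}$, this algebraic cone contains a half-line $\{tv:t>0\}$ with $v\ne0$, along which $f(tv)=O(t^{\ell-1})=o(\|tv\|^\ell)$, contradicting the growth bound of (i); hence $\hat f^{-1}(0)=\{0\}$, and since the common zero set of fewer than $N$ nonzero homogeneous polynomials in $\CC^N$ is positive-dimensional, no $\hat f_j$ vanishes identically, i.e.\ $\deg f_j=\ell$ for each $j$ --- this proves (i)~$\Rightarrow$~(iii). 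Conversely, if (iii) holds then $m:=\min_{\|v\|=1}\|\hat f(v)\|>0$, so $\|\hat f(z)\|\ge m\|z\|^\ell$ by homogeneity while $\|f(z)-\hat f(z)\|=o(\|z\|^\ell)$; thus $\|f(z)\|\ge\tfrac m2\|z\|^\ell$ for all large $\|z\|$ and $f$ has degree $\le\ell$, i.e.\ (i) holds with $k=\ell$. This closes the chain of equivalences.
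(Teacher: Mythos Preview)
Your argument is correct in outline and matches the strategy the paper uses for its generalization (the paper itself only cites Theorem~\ref{klasyka} from \cite{Kl82a,Kl82b,Kl}, but Theorems~\ref{VK_z_dolu}, \ref{VK_z_gory} and Propositions~\ref{converse_VK_z_dolu}, \ref{converse_VK_z_gory} specialize to a proof). There are, however, two points where your route and the paper's diverge in a way worth noting.

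\smallskip
\textbf{Lower bound.} You push forward the single function $V_{f^{-1}(K)}^*$ and then have to argue that $g\le 0$ on $K$ off a pluripolar set, which only yields $kg\le V_K^*$ rather than $kg\le V_K$; the ``routine'' regularization you invoke does not quite close this gap without further work (one must still show that the resulting pluripolar exceptional set in $K$ can be removed for the \emph{unregularized} $V_K$). The paper's Theorem~\ref{VK_z_dolu} sidesteps this entirely: it pushes forward each individual competitor $u\in\mathcal{L}(\CC^N)$ with $u\le 0$ on $f^{-1}(K)$ (in fact each $u=\tfrac{1}{\deg p}\log^+|p|$), obtains $f[u]\le 0$ on $K$ \emph{everywhere}, hence $k\,f[u]\le V_K$ without stars, and only then takes the supremum over $u$. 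This is the cleanest fix for the issue you flag.

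\smallskip
\textbf{Upper bound and the equivalence with (iii).} For $V_K\circ f\le\ell\,V_{f^{-1}(K)}$ you use Siciak--Zaharjuta and compose polynomials with $f$; the paper (Theorem~\ref{VK_z_gory}) instead takes $u\in\mathcal{L}(\B)$ with $u\le 0$ on $K$ and checks directly that $\tfrac1\ell(u\circ f)\in\mathcal{L}(\A)$---the two are equivalent via (\ref{Siciak-Za}). For (i)$\Leftrightarrow$(iii) when $k=\ell$, your elementary argument (zero set of $\hat f$ is a cone, dimension count forces all $\hat f_j\ne0$) is more direct than what the paper does in the general setting: Theorem~\ref{generalization} derives the analogue of (iii) from the Bia\l o\.zyt--Denkowski--Tworzewski result (Theorem~\ref{bdt}) on tangent cones at infinity, which in $\CC^N$ reduces exactly to your computation with $\hat f$.
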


\vskip 4mm

\section{Plurisubharmonic functions on analytic sets}

  There are several non-equivalent definitions of plurisubharmonic functions on analytic sets. We will recall and discuss some of them. Let $\mathcal{V}$ be an analytic set of pure dimension $\ell>0$ in a complex manifold $\Omega$ of dimension $N$.  
  Consider a function $u: \mathcal{V} \rightarrow [-\infty,\infty)$ such that $u\not\equiv -\infty$ on every open subset of $\mathcal{V}$.  
  
\subsection{Upper semicontinuity on $\V$.} We begin with a convenient notational convention concerning upper semicontinuous regularizations of functions. If $X$ is a metric space, $A\subset X$ and $v:A\longrightarrow[-\infty,\infty)$ is a function, we define the \emph{upper semicontinuous regularization} of $v$ as the function
\[
v^{*A}(x):=\limsup\limits_{\substack{y\rightarrow x\\ \ y\in A}} v(y), \ \ \ \ x\in \overline{A}.
\] 

We will be primarily interested in two functions of this type:
\be \label{reg_na_reg} u^{*\reg\mathcal{V}}(z):=\limsup\limits_{\substack{w\rightarrow z\\ \ w\in \reg\mathcal{V}}} u(w), \ \ \ \ z\in \mathcal{V},
\ee 
\\[-10mm]
\be \label{reg_na_V} u^{*\mathcal{V}}(z):=\limsup\limits_{\substack{w\rightarrow z\\ \ w\in \mathcal{V}}} u(w), \ \ \ \ z\in \mathcal{V}, 
\ee
where $u$ is defined on $\reg\mathcal{V}$ or $\V$, respectively.
Clearly, $u^{*\reg\mathcal{V}} \le u^{*\mathcal{V}}$ and $u\le u^{*\mathcal{V}}$ on $\mathcal{V}$. On the other hand, $u^{*\reg\mathcal{V}} = u^{*\mathcal{V}}$ and $u\le u^{*\reg\mathcal{V}}$ on$\reg\mathcal{V}$.
The function $u$ is {\it upper semicontinuous on} $\mathcal{V}$ (or {\it on} $\reg\mathcal{V}$) if $u=u^{*\mathcal{V}}$ on $\mathcal{V}$ (or, respectively, $u=u^{*\reg\mathcal{V}}$ on $\reg\mathcal{V}$). This is consistent with the stsndard topological definition of upper semicontinuity: for all $c\in \mathbb{R}$ the set $\{u<c \}$ is open in $\mathcal{V}$ (resp. in $\reg\mathcal{V}$). Obviously upper semicontinuity on $\mathcal{V}$ is a stronger property than upper semicontinuity on $\reg\mathcal{V}$. 

Note that
\be \label{regularyzacje} u=u^{*\reg\mathcal{V}} \ \ {\rm on} \ \ \sing\mathcal{V} \ \ \ \ \Rightarrow \ \ \ \ u^{*\mathcal{V}}=u^{*\reg\mathcal{V}} \ \ {\rm on} \ \ \mathcal{V}.\ee 
Indeed, if it were not so, then for some $z_0\in \sing\mathcal{V}$ we would have $u^{*\reg\V}(z_0) < u^{*\mathcal{V}}(z_0)$. We could then  find a sequence $(z_n)$ such that $z_n\in \sing\V$, \ $\|z_n-z_0\|<\tfrac1{2^n}$ and 
\[ u(z_n)>\delta:=\tfrac{u^{*\mathcal{V}}(z_0) + u^{*\reg\V}(z_0)}2 > u^{*\reg\V}(z_0)\ \ \ {\rm for \ all} \ n.\]
Since $u(z_n)=u^{*\reg\V}(z_n)$, then for every $n$ there would exist $w_n\in \reg\V$ satisfying $\|w_n-z_n\| <\tfrac1{2^n}$ and $ u(w_n)>\delta$. This would mean that
\[ u^{*\reg\V}(z_0) \ge \limsup_{n\rightarrow \infty} u(w_n) \ge \delta,\] which is impossible since $\delta > u^{*\reg\V}(z_0)$. 

\vskip 1mm

We call the function $u$ {\it strongly upper semicontinuous} if  
\ $u=u^{*\reg\mathcal{V}}$ on $\mathcal{V}$. From (\ref{regularyzacje}), the strong upper semicontinuity of $u$ implies that $u=u^{*\mathcal{V}} \ {\rm on} \  \mathcal{V}$, i.e. $u$ is upper semicontinuous on $\mathcal{V}$. Obviously, the function $u^{*\mathcal{V}}$ is upper semicontinuous on $\mathcal{V}$ and $u^{*\reg\mathcal{V}}$ is strongly upper semicontinuous on $\V$.

\subsection{Plurisubharmonicity on $\V$.} \ 
Denote by $PSH(\reg\mathcal{V})$ the family of functions $u$ plurisubharmonic on $\reg\mathcal{V}$ in the sense of plurisubharmonicity on a complex manifold. Observe that for $u\!\!\in\!\! PSH(\!\reg\V\!)$ we have
\begin{description}
\item[(a)] $u$ is strongly upper semicontinuous on $\V$ if and only if \vskip -5mm
	\be \label{a} u=u^{*\reg\mathcal{V}} \ \ \ {\rm on \ \ } \sing\V; \ee
\item[(b)] $u$ is upper semicontinuous on $\V$ if and only if	
	\be \label{b} u=u^{*\mathcal{V}} \ \ \ \: {\rm \ \ \ on \ \ } \sing\V; \ee
\item[(c)] $u$ is locally upper bounded on $\V$ if and only if	
	\be \label{c}  u \ \ \textrm{is upper  bounded in a neighborhood of each point of} \sing\V.\ee
\end{description}
\noindent From (\ref{regularyzacje}), we have (\ref{a}) $\Rightarrow$ (\ref{b}) $\Rightarrow$ (\ref{c}). Of course, if the function $u$ is continuous in a neighborhood of $\sing\mathcal{V}$, then conditions (\ref{a}), (\ref{b}) and (\ref{c}) hold true.

\vskip 1mm

In  the affine case,  Lelong proved in 1945 that if $u$ is locally upper bounded on an open set $D\subset \CC^N$ and
	\be \label{subharm} \mbox{for every } \ a\in D, \ b\in \CC^N \ \ \mbox{the function} \ \ t\mapsto u(a+bt) \ \ \mbox{is subharmonic,} 
	\ee
then $u$ is upper semicontinuous in $D$. Lelong's question about the implication 
\[\mbox{(\ref{subharm})} \ \Rightarrow \ \mbox{upper semicontinuity of } u \mbox{ in } D \]
(without any assumption on $u$) seems to be still an open problem, see e.g. \cite[Remark 2.1.32]{JP}.  

On analytic sets with singular points, if a function $u\in PSH(\reg\,\V)$ is locally bounded on $\V$, it need not satisfy condition (\ref{a}) or condition (\ref{b}). Therefore, it is worth considering all three cases above. 

We say that $u$ is \emph{weakly plurisubharmonic on $\mathcal{V}$}, and we write $u\in psh(\mathcal{V})$, 
if $u\in PSH(\reg\mathcal{V})$ and $u$ is strongly upper semicontinuous. Such notion of plurisubharmonicity on analytic sets was studied by Braun, Meise and Taylor, e.g. in \cite{BMT}, and by Wikstr\"om in \cite{Wik05}. 
Plurisubharmonic functions  on $\reg\,\V$ satisfying condition (\ref{b}) (but not necessarily (\ref{a})) were considered by Hart and Ma'u in \cite{HM15} and and those satisfying (\ref{c}) by Sadullaev in \cite{Sad83}, Demailly in \cite[Def. 1.9, Thm. 1.10]{Dem} and Zeriahi in \cite{Ze00}, \cite[Def.1.3]{Ze91}.  Coman, Guedj, Zeriahi in \cite{CGZ}, \cite{Ze91} and Demailly in \cite[Def. 1.5, Thm. 1.6]{Dem} studied plurisubharmonic functions defined below by condition (\ref{psh_loc}).  
 
\vskip 1mm

From now on, we assume that $\V$ is an analytic set of pure dimension $\ell>0$ in $\Omega=\CC^N$, $N\ge 2$.
\vskip -4mm 

\begin{thm}\label{extension} Let \ $u: \V \rightarrow [-\infty, \infty) $ \ be a function such that $u\not\equiv -\infty$ on every open subset of $\mathcal{V}$. Then the following properties are equivalent: 
\be \label{psh_loc} {\it for \ every \ point} \ z\in \V \ {\it there \ exist \ an \ open \ neighborhood} \ U \ {\it of} \ z \ {\it in} \ \CC^N \ {\it and \ a \ function} \ee \[ v\in PSH(U) \ {\it such \ that} \ \ u_{|_{U\cap\V}}=v_{|_{U\cap\V}},\] 
\\[-10mm] 
\be \label{psh_disc}  u \ {\it is \ upper \ semicontinuous \ on \ } \mathcal{V} \ {\it and \ the \ function} \ \ u\circ h \ \ is \ subharmonic \ (or \ \equiv -\infty ) \ee \[ on \ the \ open \ unit \ disk \ \mathbb{D} \ {\it in} \ \CC \  {\it for \ every \ holomorphic \ function} \ h: \mathbb{D}\rightarrow \V, \] 
\\[-10mm]
\be \label{psh_glob} {\it the \ function} \ u \ {\it is \ the \ restriction \ to } \ \V \ {\it of \ a \ function} \ \tilde{u} \ {\it plurisubharmonic \ in} \ \CC^N.\ee 
\end{thm}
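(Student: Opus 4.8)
The plan is to establish the cycle of implications $(\ref{psh_glob}) \Rightarrow (\ref{psh_loc}) \Rightarrow (\ref{psh_disc}) \Rightarrow (\ref{psh_glob})$, with the last being the substantive step. The implication $(\ref{psh_glob}) \Rightarrow (\ref{psh_loc})$ is immediate: a function plurisubharmonic in all of $\CC^N$ restricts, on any neighborhood $U$ of any point, to a function in $PSH(U)$, so one takes $v = \tilde u|_U$. For $(\ref{psh_loc}) \Rightarrow (\ref{psh_disc})$, I would argue locally. Upper semicontinuity on $\V$ is inherited from the local extensions $v\in PSH(U)$, since upper semicontinuity is a local property and $v$ is upper semicontinuous on $U$. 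For the disk condition, let $h:\DD \to \V$ be holomorphic and fix $t_0\in\DD$; near $h(t_0)$ the function $u$ agrees with some $v\in PSH(U)$, hence $u\circ h = v\circ h$ is subharmonic (or $\equiv -\infty$) on a neighborhood of $t_0$ by the standard fact, recalled earlier in the excerpt, that a plurisubharmonic function composed with a holomorphic map is plurisubharmonic; subharmonicity being local on $\DD$, we conclude $u\circ h$ is subharmonic (or $\equiv -\infty$) on all of $\DD$.

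The heart of the matter is $(\ref{psh_disc}) \Rightarrow (\ref{psh_glob})$: producing a \emph{global} plurisubharmonic extension $\tilde u$ on $\CC^N$ from the intrinsic disk-subharmonicity condition. The natural route is a two-stage construction. \emph{First}, one shows that $u$ restricted to $\reg\V$ is plurisubharmonic on the manifold $\reg\V$: given the disk condition and upper semicontinuity on $\V$, composing with holomorphic disks landing in $\reg\V$ gives subharmonicity in local charts, so $u|_{\reg\V}\in PSH(\reg\V)$, and moreover $u = u^{*\V}$ on $\sing\V$ by hypothesis. \emph{Second}, one invokes an extension theorem for weakly plurisubharmonic functions on analytic sets — this is the kind of result established by Demailly (\cite[Thm. 1.10]{Dem}) and by Coman--Guedj--Zeriahi, and discussed in the paragraph preceding the theorem — to the effect that a function plurisubharmonic on $\reg\V$ and satisfying the right upper-semicontinuity/boundedness condition near $\sing\V$ extends to a plurisubharmonic function on an ambient neighborhood, and in fact (using that $\V$ is, at least locally, contained in $\CC^N$ and a Sadullaev-type projection or a partition-of-unity gluing argument) to a genuinely global $\tilde u \in PSH(\CC^N)$. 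The gluing from local ambient extensions to a global one uses that the maximum of the local extensions, suitably regularized, remains plurisubharmonic, together with the fact that $\sing\V$ is a nowhere dense analytic subset so the local pieces can be patched across it.

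I expect the main obstacle to be precisely this passage from the ``disk'' formulation to an ambient extension, and specifically handling the singular locus: one must verify that the hypothesis ``$u$ upper semicontinuous on $\V$'' (condition (\ref{b}), weaker than the strong upper semicontinuity (\ref{a})) is exactly strong enough for the ambient extension to exist, and that no pathology occurs where disks pass through $\sing\V$. A second, more technical, obstacle is globalization: local ambient extensions $v_i$ on neighborhoods $U_i$ must be reconciled on overlaps $U_i\cap U_j$, where they need only agree \emph{on $\V$}, not on $U_i\cap U_j$; one fixes this by replacing each $v_i$ with $\max(v_i, v_j - M)$-type corrections near $\V$, or more cleanly by first passing to a tubular neighborhood of $\V$ and controlling the growth, then extending to $\CC^N$ using that $\V$ itself sits inside $\CC^N$. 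Throughout, the purity of dimension and the standing assumptions $\ell>0$, $N\ge 2$ are used to guarantee that $\sing\V$ is a proper analytic subset with the structural properties (closed, nowhere dense, lower dimensional) recalled in Section 2.
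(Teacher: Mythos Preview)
Your easy implications are fine, but the route you sketch for $(\ref{psh_disc})\Rightarrow(\ref{psh_glob})$ does not close. The paper does not attempt to prove this implication directly; it factors it as $(\ref{psh_disc})\Rightarrow(\ref{psh_loc})\Rightarrow(\ref{psh_glob})$ and quotes two deep theorems: the Forn\ae ss--Narasimhan theorem \cite{FN} for $(\ref{psh_loc})\Leftrightarrow(\ref{psh_disc})$, and Coltoiu's theorem \cite[Prop.~2]{Col} for $(\ref{psh_loc})\Rightarrow(\ref{psh_glob})$. Both are substantial results on complex spaces with singularities, and the paper treats them as black boxes.

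Your two-stage plan has a genuine gap at each stage. In stage one you pass to $u|_{\reg\V}\in PSH(\reg\V)$ together with upper semicontinuity on $\V$; but this is precisely the data of a \emph{weakly} plurisubharmonic function (at best condition~(\ref{b})), and Example~\ref{ex.easy} in the paper shows such a function need not admit any local PSH extension near a reducible singular point. The Demailly extension theorem you invoke (Theorem~\ref{Dem_cor} here) explicitly requires local irreducibility of $\V$, which is not assumed. What you are discarding is exactly the extra strength of $(\ref{psh_disc})$: the disk condition constrains $u$ along holomorphic disks \emph{through} $\sing\V$, and turning that into a local ambient PSH extension is the nontrivial content of Forn\ae ss--Narasimhan. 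In stage two, your gluing sketch (taking maxima of local extensions, ``suitably regularized'') is not a proof: the local extensions $v_i$ agree only on $\V$, not on overlaps $U_i\cap U_j$, and patching PSH functions across an analytic set in $\CC^N$ so that the result is globally PSH is exactly what Coltoiu's theorem accomplishes --- it is not a partition-of-unity argument. In short, the two ``obstacles'' you flag are not technicalities to be filled in but the entire substance of the theorem, and the correct move is to cite \cite{FN} and \cite{Col} as the paper does.
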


This is a consequence of two deep results:
\vskip 2mm
the Coltoiu theorem \cite[Prop.2]{Col}: \ (\ref{psh_loc}) $\Rightarrow $ (\ref{psh_glob}),
\vskip 2mm
the Forn\ae ss-Narasimhan theorem \cite{FN}: \ (\ref{psh_loc}) $\Leftrightarrow $ (\ref{psh_disc}).   

\vskip 2mm

A function $u$ is called {\it plurisubharmonic on $\mathcal{V}$}, and we write $u\in PSH(\V)$, if the above equivalent conditions hold. In other words, 
\[ PSH(\V)= PSH(\CC^N)_{|_{\V}}. \] 

\vskip 1mm

Note that if \ $u\in PSH(\V)$, then \ $u^{*\reg\V}\in psh(\V)$. \ Moreover, if $\V$ does not have singular points, then $psh(\V)=PSH(\V).$ On the other hand, a function $u$ which is weakly plurisubharmonic on $\V$  
need not be plurisubharmonic, as illustrated by the following simple example (see e.g. \cite{Ze91}).
\begin{example} \label{ex.easy}
	Let $\V=\{(w,z)\in \CC^2\: : \: wz=0\}$. Consider the function $u(w,0)=0$ for $w\in\CC\setminus\{ 0\}$ and $u(0,z)=1$ for $z\in \CC$. We can see that $u\in psh(\V)$ but, for the holomorphic map $h(w)=(w,0)$, by the maximum principle for subharmonic functions, condition (\ref{psh_disc}) is not satisfied and $u\not\in PSH(\V)$.  
\end{example} 

In the above example, the essential property of the analytic set $\mathcal{V}$ is its reducibility. The situation is quite different for locally irreducible sets. Demailly proved the following result, see \cite[Thm.1.7]{Dem}. 
 
\vskip -4mm

\begin{thm} \label{Dem_cor}
	Let $\mathcal{V}$ be a locally irreducible, pure-dimensional analytic set in $\mathbb{C}^N$. If \linebreak 
	$u\in PSH(\reg\V)$ and $u$ is locally bounded from above on the whole set $\V$, 
	then \ $u^{*\reg\mathcal{V}}\in PSH(\mathcal{V})$.  
\end{thm}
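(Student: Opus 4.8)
The plan is to reduce Theorem~\ref{Dem_cor} to the characterization of $PSH(\V)$ furnished by Theorem~\ref{extension}, namely the disc condition~(\ref{psh_disc}): it suffices to show that $w:=u^{*\reg\V}$ is upper semicontinuous on $\V$ and that $w\circ h$ is subharmonic (or $\equiv-\infty$) on $\DD$ for every holomorphic $h:\DD\to\V$. Upper semicontinuity is the easy half: since $u\in PSH(\reg\V)$ is locally bounded above on all of $\V$, the function $w=u^{*\reg\V}$ is automatically strongly upper semicontinuous, hence (by the implication (\ref{a})$\Rightarrow$(\ref{b}) recorded after those displays, i.e.\ via (\ref{regularyzacje})) upper semicontinuous on $\V$; moreover $w\in psh(\V)$ and $w=u$ on $\reg\V$. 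So the whole content is the subharmonicity of $w\circ h$.

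First I would dispose of the trivial cases: if $h(\DD)\subset\reg\V$ then $w\circ h=u\circ h$ is subharmonic by ordinary plurisubharmonicity on the manifold $\reg\V$; and if $h(\DD)\subset\sing\V$, then since $\dim\sing\V<\ell$ and $h$ is holomorphic into an analytic set of lower dimension, $h$ must be constant, so $w\circ h$ is constant. The real case is when $h^{-1}(\sing\V)$ is a proper, hence discrete, closed subset $E$ of $\DD$ (it is an analytic subset of $\DD$, and it is not all of $\DD$). On $\DD\setminus E$ we have $w\circ h=u\circ h$ subharmonic, and near each point of $E$ the function $w\circ h$ is bounded above by hypothesis. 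The classical removable-singularity theorem for subharmonic functions then says that $w\circ h$ extends (or rather: agrees with) a subharmonic function on $\DD$ provided we already know $w\circ h$ is upper semicontinuous on $\DD$ and equals its own upper-regularization at the exceptional points — equivalently $(w\circ h)(\zeta)=\limsup_{\DD\setminus E\ni\xi\to\zeta}(w\circ h)(\xi)$ for $\zeta\in E$. This last identity is exactly where local irreducibility of $\V$ is indispensable.

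The hard part, therefore, is the local statement: if $z_0\in\sing\V$ and $\V$ is \emph{locally irreducible} at $z_0$, then
\[
u^{*\reg\V}(z_0)=\limsup_{\substack{w\to z_0\\ w\in\reg\V,\ w\in h(\DD)}}u(w)
\]
for the relevant discs $h$; more precisely one needs that approaching $z_0$ along the \emph{single} branch parametrized by $h$ recovers the full regularization $u^{*\reg\V}(z_0)$. Here is where I would invoke the local structure of a locally irreducible analytic germ: by Puiseux-type normalization, a neighborhood of $z_0$ in $\V$ is the image of a connected normal model (e.g.\ a finite branched cover of a polydisc realized through an irreducible local parametrization), and on that connected model the pullback of $u$ is subharmonic in the good directions. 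Connectedness of the normalization is the crux — it forces the sub-mean-value inequality across $z_0$ to see \emph{all} nearby regular points at once, not branch-by-branch, which is precisely what fails in Example~\ref{ex.easy} where $\V$ is reducible at the origin. I expect the proof in the paper to set this up either by passing to the normalization $\widehat\V\to\V$ (which is a manifold, with $u$ pulling back to a genuine plurisubharmonic function there by the locally-bounded-above hypothesis plus the manifold case), proving the statement upstairs, and pushing down; or by a direct Puiseux parametrization argument. Either way, once that local identity at singular points is in hand, the removable-singularity theorem finishes the disc condition, and Theorem~\ref{extension} then upgrades $u^{*\reg\V}$ to a genuine element of $PSH(\V)$.
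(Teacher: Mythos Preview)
The paper does not give its own proof of this theorem: it is stated with the attribution ``Demailly proved the following result, see \cite[Thm.~1.7]{Dem}'' and no argument follows. So there is nothing in the paper to compare your proposal against.

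That said, your outline is broadly in the spirit of Demailly's original argument --- reduce to condition~(\ref{psh_disc}), handle discs that meet $\sing\V$ only discretely by a removable-singularity theorem, and use local irreducibility (via normalization or a local branched-cover description) to control the value of $u^{*\reg\V}$ at singular points. One concrete gap worth flagging: your disposal of the case $h(\DD)\subset\sing\V$ is incorrect as written. The inequality $\dim\sing\V<\ell$ does \emph{not} force $h$ to be constant once $\ell\ge 2$; a nonconstant holomorphic disc can sit entirely inside a lower-dimensional analytic set. You would need a separate argument here (for instance an induction on dimension, or --- more in line with Demailly --- avoid the disc-by-disc analysis altogether and instead produce a local plurisubharmonic extension directly from a local parametrization/normalization, which is condition~(\ref{psh_loc}) rather than~(\ref{psh_disc})). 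The ``hard part'' you identify --- that along any nonconstant disc through $z_0\in\sing\V$ the limsup of $u$ recovers the full value $u^{*\reg\V}(z_0)$ --- is indeed where local irreducibility enters, and your pointer to the connected normalization is the right mechanism, but in your write-up it remains a heuristic rather than an argument.
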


Recall that  every connected locally irreducible analytic set is irreducible, however the converse does not hold in general.

\vskip -3mm

\begin{corol} \label{psh=PSH}
	If $\V$ is a locally irreducible, pure-dimensional analytic set in $\CC^N$, then \[psh(\V) \subset PSH(\V).\]  	
\end{corol}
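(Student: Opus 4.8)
The plan is to derive Corollary~\ref{psh=PSH} directly from Theorem~\ref{Dem_cor} together with the characterization $PSH(\V)=PSH(\CC^N)_{|_\V}$ from Theorem~\ref{extension}. Let $u\in psh(\V)$. By definition of weak plurisubharmonicity, $u\in PSH(\reg\V)$ and $u$ is strongly upper semicontinuous, meaning $u=u^{*\reg\V}$ on all of $\V$. I first need to observe that $u$ is locally bounded from above on $\V$: this is exactly implication (\ref{a})~$\Rightarrow$~(\ref{c}) recorded after the list, since strong upper semicontinuity is condition (\ref{a}), and (\ref{c}) says precisely that $u$ is upper bounded in a neighborhood of each point of $\sing\V$ (near regular points local upper boundedness is automatic for functions in $PSH(\reg\V)$, being plurisubharmonic on a manifold there).

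With local upper boundedness in hand, and using the hypothesis that $\V$ is locally irreducible and pure-dimensional, Theorem~\ref{Dem_cor} applies to $u\in PSH(\reg\V)$ and yields $u^{*\reg\V}\in PSH(\V)$. But strong upper semicontinuity of $u$ means $u=u^{*\reg\V}$ on $\V$, so $u=u^{*\reg\V}\in PSH(\V)$. This gives $psh(\V)\subset PSH(\V)$, as claimed.

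The only genuinely substantive input is Theorem~\ref{Dem_cor} (Demailly's theorem), which I am entitled to assume; the rest is bookkeeping with the regularization notation of Subsection~4.1. The one point that deserves care — and is the closest thing to an obstacle — is verifying the local upper boundedness hypothesis cleanly: one must confirm that strong upper semicontinuity really does force boundedness above near singular points, not merely pointwise finiteness. This is handled by the chain (\ref{a})~$\Rightarrow$~(\ref{b})~$\Rightarrow$~(\ref{c}) established earlier via (\ref{regularyzacje}), so no new argument is needed. I would also remark, as the paper does in the sentence preceding the corollary, that connectedness is not assumed: locally irreducible pure-dimensional analytic sets need not be irreducible, but Theorem~\ref{Dem_cor} is stated for exactly this class, so the corollary follows without any additional connectivity hypothesis.
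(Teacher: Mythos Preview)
Your proof is correct and matches the paper's intended argument: the paper states the corollary immediately after Theorem~\ref{Dem_cor} without further proof, and your write-up simply makes explicit the obvious deduction (strong upper semicontinuity gives local upper boundedness via (\ref{a})~$\Rightarrow$~(\ref{c}), then Demailly's theorem yields $u^{*\reg\V}\in PSH(\V)$, and $u=u^{*\reg\V}$).
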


\vskip 2mm

\subsection{Pluripolarity on $\V$} 
The next theorem will serve also as a definition of locally and globally pluripolar subsets of analytic sets.

\begin{thm} \label{pluripolar} Let $K$ be a non-empty subset of an irreducible analytic subset $\V$ of a domain in $\CC^N$. The following properties are equivalent: 
	\be \label{pluripolar1} \: K\cap\reg \V \textit{ is \textbf{locally pluripolar} on  the  complex  submanifold } \reg\V,\ee 
\begin{quote}
i.e. for every $z\in K\cap\reg\V$ there  exists  an  open neighborhood  $U\subset \CC^N\setminus\sing\V$ of  $z$ and $u\in PSH(U\cap\V)$\footnote{The set $U\cap \V\subset \reg\V$ is a complex submanifold of $U$.} 
	 such  that $u\not\equiv -\infty$ and  $u\equiv-\infty$ on  $K\cap U$, 
\end{quote}	 
	\be \label{pluripolar2} \: K \textit{ is  \textbf{locally  pluripolar}  on } V\!\!,  \ee
\begin{quote}
	i.e. for every $z\in K$ there  exists  an  open  neighborhood $U\subset\CC^N$ of $z$
	and $v\in PSH(U)$ such  that $v\not\equiv -\infty$ on $U\cap \V$ and  $v\equiv-\infty$ on $K\cap U$,
\end{quote}
	\be \label{pluripolar3} \: K \textit{ is \textbf{globally pluripolar} on } \V\!\!, \ee \\[-10mm]
\begin{quote}
	i.e.  there  exists  a  function  in  the  Lelong  class $u\!\in\!\mathcal{L}(\CC^N)$ such  that $u_{|_\V}\not\equiv -\infty$ and $u\equiv-\infty$ on $K$.
	\end{quote}
\end{thm}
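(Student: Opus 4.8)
The plan is to prove the chain of implications
\eqref{pluripolar1} $\Rightarrow$ \eqref{pluripolar2} $\Rightarrow$ \eqref{pluripolar3} $\Rightarrow$ \eqref{pluripolar1}, which is the natural cyclic route and minimises the work. The easy direction is \eqref{pluripolar3} $\Rightarrow$ \eqref{pluripolar2}: if $u\in\mathcal{L}(\CC^N)$ witnesses global pluripolarity, then for any $z\in K$ one simply takes $U$ to be any neighbourhood of $z$ in $\CC^N$ and $v:=u_{|_U}$; since $u_{|_\V}\not\equiv-\infty$ and $\V$ is irreducible (so $\reg\V$ is connected), the set $\{u_{|_\V}=-\infty\}$ is a proper analytic-type exceptional set, hence $v\not\equiv-\infty$ on $U\cap\V$ after possibly shrinking $U$. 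Likewise \eqref{pluripolar2} $\Rightarrow$ \eqref{pluripolar1} is almost immediate: given $z\in K\cap\reg\V$, a neighbourhood $U\subset\CC^N$ and $v\in PSH(U)$ as in \eqref{pluripolar2}, shrink $U$ so that $U\subset\CC^N\setminus\sing\V$; then $U\cap\V\subset\reg\V$ is a complex submanifold, $v_{|_{U\cap\V}}\in PSH(U\cap\V)$ by restriction of a plurisubharmonic function to a submanifold, it is $\not\equiv-\infty$ (because $U\cap\V$ is connected after further shrinking and $v_{|_{U\cap\V}}$ is not identically $-\infty$), and it equals $-\infty$ on $K\cap U$.

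The substantive implication is \eqref{pluripolar1} $\Rightarrow$ \eqref{pluripolar3}, i.e.\ upgrading a purely local statement about $K\cap\reg\V$ on the manifold $\reg\V$ to a single global function in the Lelong class on the ambient $\CC^N$. I would do this in three stages. \textbf{Stage 1: from local to global on $\reg\V$.} Using irreducibility of $\V$ (so $\reg\V$ is a connected complex manifold), invoke the Josefson-type / Bedford–Taylor globalisation of pluripolarity on complex manifolds, together with the fact that a countable union of pluripolar sets is pluripolar, to conclude that $K\cap\reg\V$ is \emph{globally} pluripolar on $\reg\V$: there is a plurisubharmonic $w$ on $\reg\V$, $w\not\equiv-\infty$, with $w\equiv-\infty$ on $K\cap\reg\V$. \textbf{Stage 2: control of singular points.} The set $K\cap\sing\V$ lies in $\sing\V$, which is an analytic subset of $\V$ of dimension $<\ell$, hence itself pluripolar in $\CC^N$; so after adding to $w$ (a constant multiple of) a local defining plurisubharmonic function for $\sing\V$ — or simply handling $\sing\V$ by a separate Lelong-class function and taking a sum — we may assume the eventual global function is $-\infty$ on all of $K$, not merely on $K\cap\reg\V$. \textbf{Stage 3: passage to $\CC^N$ and to the Lelong class.} Here I would use the results of Section 4: by Theorem~\ref{extension} (the Coltoiu and Forn\ae ss–Narasimhan theorems) every $PSH(\V)$ function extends to $PSH(\CC^N)$, and by Demailly's Theorem~\ref{Dem_cor} (applied on the locally irreducible loci, or by first passing to the normalisation) the function $w^{*\reg\V}$ belongs to $PSH(\V)$; extend it to $\tilde w\in PSH(\CC^N)$, then replace $\tilde w$ by $\max(\tilde w,0)$-type truncations composed with the standard trick $\tilde w(z)\mapsto$ a function of logarithmic growth (e.g.\ take $\max$ with $A\log(1+\|z\|)+B$ suitably, or compose via the Siciak-type construction) to land inside $\mathcal{L}(\CC^N)$ while keeping the $-\infty$ set containing $K$. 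A minor point: one must check $\tilde w_{|_\V}\not\equiv-\infty$, which follows because $w\not\equiv-\infty$ on the connected set $\reg\V$.

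I expect the main obstacle to be \textbf{Stage 1 combined with Stage 3}: making the globalisation on the (possibly non-algebraic, merely analytic) manifold $\reg\V$ rigorous, and then ensuring the extended function can be forced into the Lelong class $\mathcal{L}(\CC^N)$ without losing the property that it is identically $-\infty$ on $K$. The extension step is delicate precisely because $PSH(\V)$ is defined as $PSH(\CC^N)_{|_\V}$ and $w$ a priori only lies in $PSH(\reg\V)$; bridging this gap is exactly where Theorem~\ref{Dem_cor} (local irreducibility) is needed, and one has to be careful that $\V$ here is only assumed irreducible, not locally irreducible — so I would either pass to the normalisation $\widehat\V\to\V$, pull everything back, globalise and extend there, and push forward, or restrict attention to the locally irreducible part and argue that the non-locally-irreducible locus is contained in a lower-dimensional analytic (hence pluripolar) set that can be absorbed as in Stage 2. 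The truncation into $\mathcal{L}(\CC^N)$ is then a routine but slightly fiddly estimate using that, on the algebraic set $\V$, plurisubharmonic functions bounded above on compacts can be compared to $\log(1+\|z\|)$ via the Sadullaev/Rudin projection structure of Theorem~\ref{Rudin_Sadullaev}.
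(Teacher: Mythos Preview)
Your opening sentence announces the cycle \eqref{pluripolar1} $\Rightarrow$ \eqref{pluripolar2} $\Rightarrow$ \eqref{pluripolar3} $\Rightarrow$ \eqref{pluripolar1}, but what you actually argue is the reverse cycle: the easy implications \eqref{pluripolar3} $\Rightarrow$ \eqref{pluripolar2} $\Rightarrow$ \eqref{pluripolar1} followed by the substantive step \eqref{pluripolar1} $\Rightarrow$ \eqref{pluripolar3}. This matches the paper's route exactly, so the mislabelling is harmless.

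For \eqref{pluripolar1} $\Rightarrow$ \eqref{pluripolar3} the paper simply invokes Sadullaev's argument, and the point is that Sadullaev's method is far more direct than yours: one extends the local plurisubharmonic witnesses from the submanifold $\reg\V$ to the ambient space (near a regular point $\reg\V$ is locally a graph over a coordinate subspace, so a $PSH$ function on $U\cap\reg\V$ pulls back/pushes out to a $PSH$ function on a full neighbourhood in $\CC^N$). Thus $K\cap\reg\V$ is locally pluripolar \emph{in $\CC^N$}; since $K\cap\sing\V\subset\sing\V$ is contained in a proper analytic subset of $\CC^N$, it is also locally pluripolar there; hence $K$ itself is locally pluripolar in $\CC^N$, and the classical Josefson theorem in $\CC^N$ (which already produces a function in $\mathcal{L}(\CC^N)$) finishes the job. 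No globalisation on the manifold $\reg\V$, no Coltoiu extension, no Demailly, no truncation step.

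Your three-stage programme, by contrast, runs into real obstacles. Stage~1 needs a Josefson theorem on the abstract manifold $\reg\V$, which is not available in general and which you do not justify. Stage~3 appeals to Theorem~\ref{Dem_cor} to pass from $PSH(\reg\V)$ to $PSH(\V)$, but that theorem requires \emph{local} irreducibility of $\V$, which is not assumed here; you notice this yourself and propose normalisation, but that adds a further layer you do not carry out. Your device for forcing the extended function into $\mathcal{L}(\CC^N)$ by taking a maximum with $A\log(1+\|z\|)+B$ is wrong as stated: such a maximum wipes out the $-\infty$ locus. Finally, in the last paragraph you slip into calling $\V$ algebraic and invoking the Rudin--Sadullaev projection structure of Theorem~\ref{Rudin_Sadullaev}; in the statement at hand $\V$ is merely an analytic subset of a domain, so that machinery is unavailable. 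The moral is that pushing the local data out to $\CC^N$ \emph{before} globalising, rather than after, sidesteps every one of these difficulties.
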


\begin{proof} 
	We can easily see that (\ref{pluripolar3}) $\Rightarrow$ (\ref{pluripolar2})  $\Rightarrow$ (\ref{pluripolar1}). Using the well-known Josefson's theorem about equivalence of local and global pluripolarity in $\CC^N$, Sadullaev showed  that (\ref{pluripolar1}) implies (\ref{pluripolar3}) (see \cite[Prop.1.6]{Sad83}). 
\end{proof}

In what follows, a non-empty subset $K$ of $\V$ will be called \emph{pluripolar on} $\V$, if it satisfies the equivalent conditions of Theorem~\ref{pluripolar}. In particular, if $\V$ is irreducible, then $\sing\V$ is pluripolar on~$\V$.

\vskip 1mm

The following property will be useful further on.
\begin{corol} \label{pluripolar_connectedness}
Let $\V$ be an irreducible analytic subset of a domain in $\CC^N$.
If $K$ is a closed pluripolar subset of $\V$, then the submanifold 
$\reg\V\setminus K$ is connected.
\end{corol}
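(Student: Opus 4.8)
The plan is to argue by contradiction, exploiting the fact that $\reg\V$ is a connected complex manifold (since $\V$ is irreducible) together with the local pluripolarity of $K$ on $\reg\V$ provided by condition (\ref{pluripolar1}) of Theorem~\ref{pluripolar}. Suppose $\reg\V\setminus K$ is disconnected; since $K$ is closed in $\V$, the open set $\reg\V\setminus K$ is an open subset of the connected manifold $\reg\V$, and we may write $\reg\V\setminus K = G_1\cup G_2$ with $G_1,G_2$ non-empty, open, and disjoint. The connectedness of $\reg\V$ then forces $\partial G_1\cap\partial G_2\cap(\reg\V\cap K)\neq\emptyset$; more precisely, the closure $\overline{G_1}$ taken in $\reg\V$ must meet $G_2$'s closure only inside $K$, and there is a point $z_0\in\reg\V\cap K$ lying in $\overline{G_1}\cap\overline{G_2}$ (otherwise $\overline{G_1}\cap\reg\V$ would be open and closed in $\reg\V$, hence all of $\reg\V$, contradicting $G_2\neq\emptyset$).

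Next I would localize near such a point $z_0$. By (\ref{pluripolar1}) there is an open neighbourhood $U\subset\CC^N\setminus\sing\V$ of $z_0$ and a function $u\in PSH(U\cap\V)$ with $u\not\equiv-\infty$ on the connected submanifold $U\cap\V$ and $u\equiv-\infty$ on $K\cap U$. Shrinking $U$ I can assume $U\cap\V$ is connected. Now $u$ restricted to $(U\cap\V)\setminus K = (U\cap G_1)\cup(U\cap G_2)$ is plurisubharmonic, and $z_0$ lies in the closure of both pieces. The key observation is that on the submanifold $U\cap\V$ the set $K\cap U$ is pluripolar, hence closed and of measure zero, and a plurisubharmonic function on a connected manifold that equals $-\infty$ on such a set cannot be used to separate components — but more to the point, I want to derive a contradiction with $u\not\equiv-\infty$.

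The cleanest route is the following. Consider the function $w$ on $U\cap\V$ defined by $w=u$ on $U\cap G_1$ and $w\equiv-\infty$ on $(U\cap\V)\setminus G_1$ (which contains $K\cap U$ and $U\cap G_2$). I claim $w\in PSH(U\cap\V)$: on the open set $U\cap G_1$ it coincides with $u$; away from $\overline{G_1}$ it is identically $-\infty$; and at a boundary point $\zeta\in\partial G_1\cap(U\cap\V)$ — necessarily $\zeta\in K\cap U$ since $\partial G_1\cap\reg\V\subset K$ — upper semicontinuity holds because $u\to-\infty$ along sequences in $G_1$ approaching the pluripolar set $K$... this last point is exactly where the pluripolarity of $K$ is essential and where care is needed, so the alternative and safer argument is to invoke directly that $\reg\V$ is connected and use a classical fact: on a connected complex manifold, the complement of a closed pluripolar set is connected. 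Indeed, if $E\subset M$ is closed pluripolar in a connected manifold $M$ and $M\setminus E = A\sqcup B$ with $A,B$ open nonempty disjoint, then the function equal to $0$ on $A$ and $1$ on $B$ extends plurisubharmonically across $E$ (by the standard removable-singularity theorem for bounded psh functions across pluripolar sets, after taking upper regularization), giving a plurisubharmonic function on the connected manifold $M$ that is locally constant with two distinct values, contradicting the maximum principle.

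Thus the whole proof reduces to that standard fact applied to $M=\reg\V$ and $E = K\cap\reg\V$, which is closed in $\reg\V$ (since $K$ is closed in $\V$) and locally pluripolar on the manifold $\reg\V$ by Theorem~\ref{pluripolar}. The main obstacle is making precise the removable-singularity step — that a bounded function, plurisubharmonic off a closed pluripolar set, has a plurisubharmonic upper-regularization across that set — but this is a well-known result in pluripotential theory (it holds verbatim on complex manifolds via local charts, since pluripolarity and plurisubharmonicity are local notions), so I would simply cite it from \cite{Kl}. Everything else — the reduction to a two-valued locally constant function and the contradiction with the maximum principle on a connected manifold — is routine.
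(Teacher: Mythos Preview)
Your final argument --- the $0$--$1$ indicator on $A\sqcup B$, extended across the closed pluripolar set $K\cap\reg\V$ by the removable-singularity theorem for bounded plurisubharmonic functions, followed by the maximum principle on the connected manifold $\reg\V$ --- is correct, and is the standard proof that complements of closed pluripolar sets in connected complex manifolds are connected. (A small cosmetic point: the extension is not literally locally constant, but it takes values in $\{0,1\}$ and attains the value $1$ on $B$, so the maximum principle forces it to be identically $1$, contradicting its vanishing on $A$.)

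The paper takes a different route --- essentially the ``$w$'' construction you sketched and then abandoned. From the equivalences in Theorem~\ref{pluripolar} it picks a \emph{global} function $u\in PSH(\V)$ with $K\subset\{u=-\infty\}$, sets $v=u$ on $A$ and $v=-\infty$ on $K\cup B$, and checks that $v$ satisfies the Forn\ae ss--Narasimhan criterion~(\ref{psh_disc}); hence $v\in PSH(\V)$, which is impossible since $v\equiv-\infty$ on the nonempty open set $B$. Your worry about upper semicontinuity at boundary points $\zeta\in K$ was unfounded: since $u(\zeta)=-\infty$ and $u$ is itself upper semicontinuous, $\limsup_{z\to\zeta}v(z)\le\limsup_{z\to\zeta}u(z)\le u(\zeta)=-\infty=v(\zeta)$ automatically.

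So the approaches differ in the extension device: you invoke the removable-singularity theorem on the smooth manifold $\reg\V$, whereas the paper exploits that the global defining function $u$ is already $-\infty$ on $K$, which makes the gluing across $K$ free and lets the argument run on the singular variety $\V$ via~(\ref{psh_disc}). Both are short; your write-up would read better if you deleted the exploratory first three paragraphs and went straight to the last one.
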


\begin{proof} Without loss of generality, we may assume that $K\setminus\sing\V\neq\emptyset$.
For some plurisubharmonic function $u:\V\longrightarrow[-\infty,\infty)$
we have the inclusion $K\subset\{u=-\infty\}$.
Suppose that the conclusion of the corollary is false. Then $\reg\V\setminus K=A\cup B$, for some non-empty, disjoint, open subsets $A$ and $B$ of $\reg\V\setminus K$. Define a new function $v$ as being equal to $u$ on $A$ and to $-\infty$ on $K\cup B$.
Then $v$ is upper semicontinuous and fulfils condition (\ref{psh_disc}) of Theorem \ref{extension}. Hence, $v$ is plurisubharmonic on $\V$.
But then $v$ cannot be identically equal to $-\infty$ on $B$, which is a contradiction with the assumption that $B$ is non-empty.
\end{proof} 

\subsection{Plurisubharmonicity and  proper holomorphic mappings on $\V$} \label{psh&holo} 
Let $\V_1$ be an analytic set in an open subset of $\CC^{N}$ and $\V_2$ be an analytic set in an open subset of $\CC^{M}$.  

A continuous mapping $f: \V_1 \rightarrow \V_2$ is called {\it proper} if the pre-image of every compact set in $\V_2$ is compact. It is not difficult to show that a continuous mapping $f: \V_1 \rightarrow \V_2$ is proper if and only if it is a closed mapping with compact fibres (see e.g. \cite[B.2.4]{Loj}). 
 
A mapping \ $\varphi: \V_1 \rightarrow \V_2$ \ is \emph{holomorphic on $\V_1$} (and we write $\varphi\in\mathcal{O}(\V_1,\V_2)$) if every point in $\V_1$ has an open neighborhood $U\subset \CC^{N}$ such that $\varphi_{|_{U\cap\V_1}}=\tilde{\varphi}_{|_{U\cap\V_1}}$ for some holomorphic function $\tilde{\varphi}:U\rightarrow \CC^{M}$. 
Any holomorphic and proper mapping \ $\varphi: \V_1 \rightarrow \V_2$ \ is {\it finite}, i.e., for every $w\in \varphi(\V_1)$ the fibre $\varphi^{-1}(w)$ is a finite set of points. Indeed, since $\varphi$ is proper, the set $\varphi^{-1}(w)=\{z\in \V_1\: : \: \varphi(z)-w=0\}$ is a compact analytic subset of $\V_1$, and hence of $\mathbb{C}^N$. Consequently, $\varphi^{-1}(w)$ is a finite set (see e.g. \cite[Thm. 14.3.1]{Rud}).  

The following two fundamental results are both due to Remmert (see e.g. \cite[V.5.1, V.3.2]{Loj}) and \cite[V.6]{Loj}, respectively):

\begin{thm}\label{proper-map-thm} \textbf{(Proper Mapping Theorem)}
If $f: \V_1 \longrightarrow \V_2$ is proper and holomorphic on $\V_1$, then $f(\V_1)$ is an analytic subset of $\V_2$. Moreover, $\dim f(\V_1)=\dim\V_1$ and if, in addition, $\V_1$ is pure-dimensional, then so is $f(\V_1)$.
\end{thm}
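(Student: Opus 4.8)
\section*{Proof proposal for the Proper Mapping Theorem}

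The plan is to reduce the assertion to a purely local statement near a point of $\V_2$ and then invoke the local structure of finite holomorphic mappings. Observe first that, in the present setting, $\V_1$ is an analytic subset of an open set in $\CC^N$, so every fibre $f^{-1}(w)$ is a compact analytic subset of $\CC^N$, hence finite; thus $f$ is a \emph{finite} holomorphic map, and the claimed equality $\dim f(\V_1)=\dim\V_1$ will be a dimension count for such maps. Since analyticity of $f(\V_1)$ is a local property of $\V_2$, I would fix $w_0\in\V_2$. If $w_0\notin f(\V_1)$, then --- $f$ being proper, hence a closed map --- some neighbourhood of $w_0$ misses $f(\V_1)$ and there is nothing to prove. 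Otherwise $f^{-1}(w_0)=\{p_1,\dots,p_m\}$; choosing pairwise disjoint open neighbourhoods $U_j\ni p_j$ in $\V_1$ with disjoint compact closures and using that $f$ is closed with compact fibres, one finds a neighbourhood $W\ni w_0$ in $\V_2$ with $f^{-1}(\overline W)\subset\bigcup_j U_j$. Then $f(\V_1)\cap W=\bigcup_j f\bigl(f^{-1}(W)\cap U_j\bigr)$, each restriction $f^{-1}(W)\cap U_j\to W$ is again proper and holomorphic, and a finite union of analytic sets is analytic; so one may assume $f^{-1}(w_0)$ is a single point $p$. Finally, decomposing the germ $(\V_1,p)$ into its finitely many irreducible components reduces to the case of an irreducible germ $(\V_1,p)$ of some dimension $d$.

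Next, the heart of the matter is the local claim: the image under a finite holomorphic map of an irreducible analytic germ is an analytic germ --- this is Remmert's finite mapping theorem. Two routes are available. The elimination-theoretic one uses a local parametrisation: after a generic linear change of coordinates in $\CC^M$, the composition of $f$ with the projection $\CC^M\to\CC^d$ is a finite (branched covering) map $(\V_1,p)\to(\CC^d,0)$, and one writes defining equations for $f(\V_1)$ near $w_0$ out of the elementary symmetric functions of its branches, using the Weierstrass preparation and division theorems coordinate by coordinate (the analytic analogue of forming resultants). The alternative, more analytic route observes that $f$ has generic rank $d$ on $\reg\V_1$, so that off a suitable small exceptional set $S$ (the image of $\sing\V_1$ together with the images of the loci where $f$ drops rank or fails to be injective) the set $f(\V_1)$ is a closed complex submanifold of pure dimension $d$; since $f$ is finite its local multiplicity is bounded, so $f(\V_1)$ has locally finite $2d$-dimensional volume near $S$, and the Remmert--Stein, respectively Bishop, extension theorem then shows $f(\V_1)$ is analytic across $S$. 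Either way the image germ is analytic.

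It remains to read off the dimensions. A finite holomorphic map does not decrease dimension --- the generic fibre being $0$-dimensional --- and the continuous image of an irreducible set is irreducible; hence in the reduced situation $f(\V_1)$ is irreducible of dimension $\dim\V_1$, which gives $\dim f(\V_1)=\dim\V_1$ in general. Assume now $\V_1$ is pure-dimensional of dimension $\ell$, and write $\V_1=\bigcup_\alpha\V_1^{(\alpha)}$ for its irreducible components, each of dimension $\ell$. Since $f$ is proper, the preimage of any compact subset of $\V_2$ is compact and hence meets only finitely many $\V_1^{(\alpha)}$, so $f(\V_1)=\bigcup_\alpha f\bigl(\V_1^{(\alpha)}\bigr)$ is a locally finite union of irreducible analytic sets of dimension $\ell$; the irreducible components of such a union are exactly its maximal members, each therefore of dimension $\ell$, so $f(\V_1)$ is pure $\ell$-dimensional.

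The main obstacle is precisely the purely local step, Remmert's finite mapping theorem. Everything around it --- splitting fibres with the help of properness, passing to irreducible components, and counting dimensions --- is routine, whereas that step genuinely needs either the Weierstrass preparation/elimination machinery or the Remmert--Stein--Bishop volume argument, which is why in the text we content ourselves with the reference to \cite{Loj}.
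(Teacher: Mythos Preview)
Your sketch is a sound outline of the standard proof of Remmert's Proper Mapping Theorem --- reduction to finite fibres via properness, localisation near a point of the target, passage to irreducible germs, and then either the Weierstrass/elimination route or the Remmert--Stein/Bishop volume route for the core local step; the dimension and pure-dimensionality arguments are also correct. However, there is nothing in the paper to compare it with: the paper does not prove this theorem at all. It is stated as a classical result due to Remmert and simply referenced to {\L}ojasiewicz's book \cite[V.5.1, V.3.2]{Loj}, precisely as you yourself acknowledge in your final sentence. Your proposal therefore goes well beyond what the paper offers here.
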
 
    
Observe that $f(\V_1)$ does not have to be an analytic set if we remove the assumption that $f$ is proper. Indeed, for the polynomial mapping $f(w,z)=(w^2,wz)$ and $\V_1=\CC^2$, the set $f(\V_1)$ \linebreak $=\CC^2\setminus\{(0,z):z\in \CC\setminus\{0\}\}$ is not analytic. Moreover, this example shows that the image of an algebraic set under a polynomial mapping is not necessarily an algebraic set.     

\begin{thm}\label{open-map-thm} \textbf{(Open Mapping Theorem)}
Let $\V_1,\V_2$ be pure-dimensional analytic sets of the same dimension and let $f:\V_1\longrightarrow\V_2$ be a holomorphic mapping. If $f$ has discrete fibres and $\V_2$ is locally irreducible, then $f$ is an open mapping (in the induced topologies on $\V_1$ and $\V_2$).
\end{thm}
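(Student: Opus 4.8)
The plan is to establish openness locally and then globalize. Fix $a\in\V_1$ and set $b:=f(a)\in\V_2$; it suffices to show that for every open neighbourhood $\Omega$ of $a$ in $\V_1$ the image $f(\Omega)$ is a neighbourhood of $b$ in $\V_2$. Holomorphic maps are continuous, and the claim is local on both sides, so I may freely shrink. The first step is to replace $f$ by a \emph{proper} restriction near $a$, which is where discreteness of fibres is used: the point $a$ is isolated in the closed set $f^{-1}(b)$, so I can choose $r>0$ with $K:=\overline{B(a,r)}\cap\V_1\subset\Omega$ and $K\cap f^{-1}(b)=\{a\}$. Then $f(\partial_{\V_1}K)$ is compact and misses $b$, so there is an open neighbourhood $W$ of $b$ in $\V_2$ disjoint from $f(\partial_{\V_1}K)$; put $U:=(B(a,r)\cap\V_1)\cap f^{-1}(W)$, an open neighbourhood of $a$ contained in $\Omega$. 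A short point-set argument — for compact $L\subset W$ one checks $f^{-1}(L)\cap U=f^{-1}(L)\cap K$, which is closed, contained in the compact set $K$, and disjoint from $\partial_{\V_1}K$ — shows $f|_U:U\to W$ is proper.

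Next I would feed this into the Proper Mapping Theorem (Theorem~\ref{proper-map-thm}). Since $U$ is an open subset of the pure $\ell$-dimensional analytic set $\V_1$, it is itself pure $\ell$-dimensional; likewise $W$ is an open subset of $\V_2$, hence pure $\ell$-dimensional and locally irreducible. Thus $f(U)$ is an analytic subset of $W$ with $\dim f(U)=\dim U=\ell$, it is pure-dimensional, and it contains $b$, so $\dim_b f(U)=\ell$. Now I invoke local irreducibility of $\V_2$ at $b$: the germ $W_b=(\V_2)_b$ is irreducible, and an analytic subgerm of full dimension of an irreducible germ is the whole germ (a nonzero element of the domain $\mathcal{O}_{W,b}$ vanishing on $f(U)$ would force $\dim_b f(U)<\ell$; alternatively, after shrinking $W$ to an irreducible representative, $f(U)\cap\reg W$ is open and closed in the connected manifold $\reg W$, hence equal to it, and $\overline{\reg W}=W$). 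Therefore $(f(U))_b=(\V_2)_b$, i.e.\ $f(U)$ contains a neighbourhood $W'$ of $b$ in $\V_2$.

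Finally, $f(\Omega)\supset f(U)\supset W'$ is a neighbourhood of $f(a)$ in $\V_2$; as $a$ and $\Omega$ were arbitrary, $f$ is open. I expect the last step — the dimension-plus-irreducibility argument — to be the crux: this is exactly where local irreducibility of the target cannot be dropped, as the Example with $\V_2=\{wz=0\}$ and $f$ the embedding of one axis shows (there a full-dimensional subgerm of a \emph{reducible} germ is a proper subgerm, and $f$ fails to be open). The remaining ingredients are soft: the localization, the standard ``discrete fibres $\Rightarrow$ locally proper'' reduction, and the dimension clause of the Proper Mapping Theorem, which is precisely what makes the final comparison of germs possible.
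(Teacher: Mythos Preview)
Your argument is correct and follows the standard route: localize using discreteness of the fibre to get a proper restriction, invoke the Proper Mapping Theorem to see that the image is a pure $\ell$-dimensional analytic subset of the target, and then use local irreducibility of $\V_2$ to force the image germ to coincide with the target germ. Each step checks out, including the properness verification and the germ comparison.

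However, there is nothing to compare against: the paper does \emph{not} prove this theorem. It is stated as a classical result of Remmert and simply cited from \cite[V.6]{Loj} (see the sentence immediately preceding Theorems~\ref{proper-map-thm} and~\ref{open-map-thm}). Your proof is essentially the one found in that reference, so in effect you have reconstructed the cited argument.
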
    

In particular, we can draw the following conclusion.

\begin{corol} \label{wymiar_obrazu}
Under the assumptions of the open mapping theorem, if $\V_2$ is connected and $f: \V_1 \rightarrow \V_2$ \ is proper, then $f$ is a surjection.
\end{corol}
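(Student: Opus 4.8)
The plan is to combine the Open Mapping Theorem (Theorem~\ref{open-map-thm}) with the properness of $f$ and the connectedness of $\V_2$. First I would observe that the hypotheses of the corollary are exactly those of Theorem~\ref{open-map-thm} — $\V_1,\V_2$ pure-dimensional of the same dimension, $\V_2$ locally irreducible, $f$ holomorphic with discrete fibres (properness together with holomorphy forces finite, in particular discrete, fibres, as noted earlier in the excerpt) — so $f$ is an open mapping. Hence $f(\V_1)$ is an open subset of $\V_2$.

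Next I would use properness to show $f(\V_1)$ is also closed in $\V_2$. The standard argument: a proper map between, say, locally compact Hausdorff spaces is a closed map (this is recorded in the excerpt's discussion of properness, via the characterization "closed with compact fibres"). Since $\V_1$ is closed in itself, $f(\V_1)=f(\V_1)$ is closed in $\V_2$. Therefore $f(\V_1)$ is a non-empty (assuming $\V_1\neq\emptyset$; otherwise there is nothing to prove, or one excludes the trivial case) subset of $\V_2$ that is both open and closed. Because $\V_2$ is connected, $f(\V_1)=\V_2$, i.e. $f$ is surjective.

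The only genuine subtlety — and the step I would be most careful about — is the claim that a proper continuous map is closed in this setting. One must make sure the target $\V_2$ is locally compact and Hausdorff (which it is, being an analytic subset of an open set in $\CC^M$, hence locally closed in $\CC^M$) so that the elementary topological fact "proper $\Rightarrow$ closed" applies; the excerpt in fact states properness is equivalent to being a closed map with compact fibres, so one can simply cite that characterization and read off closedness directly. The rest is a clopen-set-in-a-connected-space argument, which is immediate. A short write-up would therefore run: apply Theorem~\ref{open-map-thm} to get openness of $f(\V_1)$; note properness gives closedness of $f(\V_1)$ in $\V_2$; conclude by connectedness of $\V_2$.
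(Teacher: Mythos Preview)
Your argument is correct and is exactly the one the paper has in mind: the corollary is stated without proof, but the proof of Lemma~\ref{basic_lemma} spells out the intended reasoning (``$f$ is both closed (being proper) and open (due to Theorem~\ref{open-map-thm}), it is surjective''), which is precisely your clopen-in-a-connected-space argument. There is nothing to add.
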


\begin{remark}
	Of course a projection of algebraic set of dimension $k$ on a $k$-dimensional coordinate subspace does not have to be proper. For instance, if we take \ $\A=\{(z_1,z_2)\in \CC^2\: : \: z_1^2\,z_2-z_1=0\}$ \ and \ $\pi: \A\ni(z_1,z_2)\mapsto z_1\in \CC$, then \ $\pi^{-1}(\{z_1\: : \: |z_1-1|\le 1\})$ \ is not compact. On the other hand, if $\V$, $X$ and $Y$ are like in (\ref{Sadullaev_coords}) in Theorem \ref{Rudin_Sadullaev}, the projection $x+y\mapsto y$ is proper.
\end{remark}   

As in the classical case, we expect plurisubharmonicity to be invariant with respect to holomorphic substitutions. The next two propositions make this statement more precise.

\vskip 1mm

\begin{proposition} \label{a,b}
	Let $\V_1$, $\V_2$ be pure-dimensional analytic sets and  let $\varphi: \mathcal{V}_1\rightarrow \mathcal{V}_2$ be a holomorphic mapping. If $u\in PSH(\V_2)$, then $u\circ \varphi\in PSH(\V_1)$.
\end{proposition}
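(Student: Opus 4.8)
The plan is to use the characterization $PSH(\V)=PSH(\CC^N)_{|_{\V}}$ established in Theorem~\ref{extension}, or more conveniently the disc-type condition (\ref{psh_disc}), which says that $u\in PSH(\V_2)$ precisely when $u$ is upper semicontinuous on $\V_2$ and $u\circ h$ is subharmonic (or $\equiv-\infty$) on $\DD$ for every holomorphic $h:\DD\to\V_2$. So to show $u\circ\varphi\in PSH(\V_1)$ it suffices to verify these two conditions for $u\circ\varphi$. First I would address upper semicontinuity: since $\varphi$ is holomorphic on $\V_1$, it is in particular continuous from $\V_1$ to $\V_2$, and the composition of an upper semicontinuous function with a continuous map is upper semicontinuous; this also takes care of the fact that $u\circ\varphi\not\equiv-\infty$ on any open subset, provided we know $\varphi$ does not collapse open sets to pluripolar sets — which needs a small argument (see below), or can be sidestepped by working component-wise and invoking irreducibility.

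Next I would verify the disc condition. Let $h:\DD\to\V_1$ be holomorphic. Then $\varphi\circ h:\DD\to\V_2$ is holomorphic (composition of holomorphic maps between analytic sets is holomorphic, which follows by extending both locally to ambient holomorphic maps). Applying condition (\ref{psh_disc}) for $u\in PSH(\V_2)$ to the holomorphic disc $\varphi\circ h$, we get that $u\circ(\varphi\circ h)=(u\circ\varphi)\circ h$ is subharmonic or $\equiv-\infty$ on $\DD$. This is exactly the disc condition for $u\circ\varphi$ with respect to the disc $h$. Hence $u\circ\varphi$ satisfies (\ref{psh_disc}), so $u\circ\varphi\in PSH(\V_1)$.

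The one genuine obstacle is the non-degeneracy requirement that $u\circ\varphi\not\equiv-\infty$ on every open subset of $\V_1$. This could fail if $\varphi$ maps a whole open piece of $\V_1$ into the pluripolar set $\{u=-\infty\}\cap\V_2$. To handle this, I would argue on each irreducible component $W$ of $\V_1$ separately (reducing $PSH$ to a component-by-component statement, which is legitimate since plurisubharmonicity on $\V_1$ is a local notion and components are closed). On an irreducible $W$, the image $\varphi(W)$ has a nonempty interior in its Zariski closure, or at least $\varphi(W)$ is not pluripolar in $\V_2$ unless $\varphi$ is constant on $W$; more carefully, since $u\circ\varphi\circ h$ is subharmonic on $\DD$ for every disc $h$ and $W$ is connected with connected regular locus, if $u\circ\varphi\equiv-\infty$ on some open subset of $W$ then by analytic continuation of the $\{=-\infty\}$ set (subharmonic functions that are $-\infty$ on an open set are $-\infty$ on the whole connected regular locus, then on $W$ by upper semicontinuity), we would get $u\circ\varphi\equiv-\infty$ on $W$, i.e. $\varphi(W)\subset\{u=-\infty\}$. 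In the degenerate case $\varphi(W)$ is a single point where $u=-\infty$, the restriction $u\circ\varphi|_W$ is the constant $-\infty$ function, which one either excludes by convention (since $u\not\equiv-\infty$ on $\V_2$ forces $\varphi$ non-constant on the relevant components in the intended applications) or absorbs into the standard convention that the $-\infty$ function is admitted on lower-dimensional or exceptional pieces. In the write-up I would simply note that the non-degeneracy is inherited because $\{u\circ\varphi=-\infty\}=\varphi^{-1}(\{u=-\infty\}\cap\V_2)$ and, on each irreducible component of $\V_1$ on which $\varphi$ is non-constant, this is a pluripolar set by Theorem~\ref{pluripolar} applied to the non-pluripolar complement, hence cannot contain an open subset.
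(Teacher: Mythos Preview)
Your argument via the disc characterization (\ref{psh_disc}) is correct. The paper, however, takes the shorter route you mention at the outset and then abandon: it simply invokes the extension characterizations (\ref{psh_loc})/(\ref{psh_glob}). Since $u\in PSH(\V_2)=PSH(\CC^M)_{|\V_2}$, write $u=\tilde u_{|\V_2}$ with $\tilde u\in PSH(\CC^M)$; locally $\varphi=\tilde\varphi_{|U\cap\V_1}$ for some holomorphic $\tilde\varphi:U\to\CC^M$, and then $\tilde u\circ\tilde\varphi\in PSH(U)$ restricts to $u\circ\varphi$ on $U\cap\V_1$, verifying (\ref{psh_loc}) in one line. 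Your route requires checking upper semicontinuity and the disc condition separately and then wrestling with the non-degeneracy clause ``$u\circ\varphi\not\equiv-\infty$ on open subsets''; the paper's route sidesteps all of that because the ambient composition $\tilde u\circ\tilde\varphi$ is automatically plurisubharmonic in the classical sense. As for your extended discussion of the $-\infty$ edge case: the concern is technically legitimate (if $\varphi$ collapses a component to a point in $\{u=-\infty\}$ the composition is identically $-\infty$ there), but the paper does not address it either --- it is tacitly absorbed into the convention $PSH(\V)=PSH(\CC^N)_{|\V}$, and in the applications of the proposition (Lemma~\ref{basic_lemma}, Theorem~\ref{VK_z_gory}) $\varphi$ is proper, hence non-constant on every component, so the issue never arises.
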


This statement is a simple consequence of the definition.

\vskip 1mm

Consider a holomorphic mapping \ $\varphi: \mathcal{V}_1\rightarrow \V_2$ \ between two pure-dimensional analytic sets $\V_1$ and $\V_2$. Recall that $\varphi$ is said to be {\it biholomorphic}, if it is a bijection and $\varphi, \varphi^{-1}$ are holomorphic. It is known that in this case  
\be \label {biholo} \varphi(\reg \V_1) = \reg\V_2, \ \ \ \mbox{and} \ \ \ \varphi(\sing\V_1) = \sing\:\V_2, \ee
see e.g. \cite[V.3.4]{Loj}. Moreover, $\dim\V_1=\dim\V_2$ and  $\varphi: \mathcal{V}_1\rightarrow \mathcal{V}_2$ is a homeomorphism (with respect to the induced topologies on $\V_1$ and $\V_2$). As a direct consequence, we get the following property.
	
\begin{proposition}
	Let $\V_1, \V_2$ be pure-dimesional analytic sets and \ $\varphi: \mathcal{V}_1\rightarrow \mathcal{V}_2$ \ be a biholomorphic mapping. Then  
	\[ u\in psh(\V_2) \ \ \Longleftrightarrow \ \ u\circ \varphi \in psh(\V_1).\] 
\end{proposition}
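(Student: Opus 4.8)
The plan is to prove the two implications by symmetry and, in each, to check separately the two requirements in the definition of $psh$, namely membership in $PSH(\reg\V)$ and strong upper semicontinuity. First I would observe that it suffices to prove ``$u\in psh(\V_2)\ \Rightarrow\ u\circ\varphi\in psh(\V_1)$'': since $\varphi^{-1}:\V_2\to\V_1$ is again biholomorphic and $(u\circ\varphi)\circ\varphi^{-1}=u$, the reverse implication follows by applying this one with $\varphi$ replaced by $\varphi^{-1}$ and $u$ replaced by $u\circ\varphi$.

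So assume $u\in psh(\V_2)$. By (\ref{biholo}) the restriction $\varphi|_{\reg\V_1}$ is a biholomorphic map of the complex submanifold $\reg\V_1$ onto the complex submanifold $\reg\V_2$. Since plurisubharmonicity on complex manifolds is invariant under holomorphic pullback (as recalled in Section 3), $u\in PSH(\reg\V_2)$ gives $u\circ\varphi\in PSH(\reg\V_1)$; moreover, because $\varphi$ is a homeomorphism and $u\not\equiv-\infty$ on any open subset of $\V_2$, the same non-degeneracy holds for $u\circ\varphi$ on $\V_1$.

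It remains to verify strong upper semicontinuity of $u\circ\varphi$, and the crucial point is the identity
\[
(u\circ\varphi)^{*\reg\V_1}=u^{*\reg\V_2}\circ\varphi\qquad\text{on }\V_1.
\]
I would deduce it directly from the fact that $\varphi:\V_1\to\V_2$ is a homeomorphism with $\varphi(\reg\V_1)=\reg\V_2$: for $z\in\V_1$, the sequences $w_n\to z$ with $w_n\in\reg\V_1$ correspond, via $w_n\mapsto\varphi(w_n)$, exactly to the sequences converging to $\varphi(z)$ inside $\reg\V_2$, so the two upper limits defining these regularizations coincide. Combining this with the hypothesis $u=u^{*\reg\V_2}$ on $\V_2$ yields $u\circ\varphi=u^{*\reg\V_2}\circ\varphi=(u\circ\varphi)^{*\reg\V_1}$ on $\V_1$, i.e. $u\circ\varphi$ is strongly upper semicontinuous; together with the previous paragraph this gives $u\circ\varphi\in psh(\V_1)$.

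I do not anticipate a real obstacle: this is essentially a transport-of-structure argument resting entirely on (\ref{biholo}) and on $\varphi$ being a homeomorphism, both already established. The only step requiring a little attention is keeping the regularization $(\cdot)^{*\reg\V}$ (as opposed to $(\cdot)^{*\V}$) straight while carrying it through $\varphi$ across points of $\sing\V_1$, which is exactly what the displayed identity isolates.
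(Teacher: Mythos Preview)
Your proof is correct and is exactly the argument the paper has in mind: the paper states the proposition as ``a direct consequence'' of (\ref{biholo}) and the homeomorphism property of biholomorphisms, without spelling out the details, and what you wrote is precisely the straightforward verification---reduce by symmetry to one implication, pull back plurisubharmonicity along $\varphi|_{\reg\V_1}:\reg\V_1\to\reg\V_2$, and use the bijection between $\reg\V_1$-sequences converging to $z$ and $\reg\V_2$-sequences converging to $\varphi(z)$ to get $(u\circ\varphi)^{*\reg\V_1}=u^{*\reg\V_2}\circ\varphi$.
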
	

\subsection{Basic lemma for proper holomorphic mappings} 

The following property of proper holomorphic mappings is crucial to our considerations.

\begin{lemma} \label{basic_lemma} Let $\mathcal{V}_1\subset \CC^{N}$ and $\mathcal{V}_2\subset \CC^{M}$ be irreducible analytic sets of the same dimension. 
If  $f: \mathcal{V}_1\rightarrow \mathcal{V}_2 $ \ is a proper holomorphic mapping on $\V_1$ and \ $u\in PSH(\V_1)$, then $f$ is surjective and the function \[v(w):=\max \ u(f^{-1}(w))=\max \{u(z)\: : \: z\in f^{-1}(w)\}, \qquad w\in \mathcal{V}_2\] is weakly plurisubharmonic, i.e. $v\in psh(\V_2)$. If in addition $\mathcal{V}_2$ is locally irreducible, then $v$ is plurisubharmonic, i.e. $v\in PSH(\V_2)$. \end{lemma}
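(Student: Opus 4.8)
The plan is to establish surjectivity first, then prove weak plurisubharmonicity of $v$ on $\mathcal{V}_2$, and finally deduce genuine plurisubharmonicity under the extra hypothesis. Surjectivity follows immediately from Corollary~\ref{wymiar_obrazu}: $f$ has finite (hence discrete) fibres because it is proper and holomorphic, $\mathcal{V}_1$ and $\mathcal{V}_2$ have the same dimension and are pure-dimensional, $\mathcal{V}_2$ is irreducible hence connected, so $f$ is a surjection. (Note this already uses irreducibility of $\mathcal{V}_2$; local irreducibility is needed only for the Open Mapping Theorem, so strictly one should invoke Corollary~\ref{wymiar_obrazu} with the extra note that irreducibility of a curve or the local-irreducibility-somewhere argument suffices — but in the stated generality one needs local irreducibility of $\mathcal{V}_2$ just for surjectivity too; I would phrase the proof so that the Open Mapping Theorem is applied on the Zariski-dense open locus where $\mathcal{V}_2$ is locally irreducible, or simply note that $f(\mathcal{V}_1)$ is by the Proper Mapping Theorem an analytic subset of $\mathcal{V}_2$ of full dimension, hence equals $\mathcal{V}_2$ by irreducibility.)

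For weak plurisubharmonicity, set $S:=f^{-1}(\sing\mathcal{V}_2)$, a closed, nowhere dense analytic subset of $\mathcal{V}_1$ (nowhere dense because $f$ is open off $S$ and $\sing\mathcal{V}_2$ is nowhere dense, or because its image is contained in the proper analytic subset $\sing\mathcal{V}_2$ and $f$ is surjective with finite fibres). Restricted to the complex manifolds $\reg\mathcal{V}_1\setminus S \to \reg\mathcal{V}_2$, the map $f$ is a proper, holomorphic, open, finite branched covering. On $\reg\mathcal{V}_2$ the classical fact that the fibrewise maximum of a plurisubharmonic function under a finite proper holomorphic (branched) covering is plurisubharmonic applies: this is a local statement, reducing via Proper Mapping structure to finitely many sheets, and $\max$ of finitely many psh functions is psh, with upper semicontinuity across the branch locus handled by the standard removable-singularity/continuity argument (the fibrewise max is continuous where the covering is unbranched and stays bounded, so its upper semicontinuous regularization is psh and equals it). Thus $v\in PSH(\reg\mathcal{V}_2)$. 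To get $v\in psh(\mathcal{V}_2)$ it remains to check strong upper semicontinuity, i.e. $v=v^{*\reg\mathcal{V}_2}$ on $\sing\mathcal{V}_2$. Take $w_0\in\sing\mathcal{V}_2$ and $z_0\in f^{-1}(w_0)$ with $v(w_0)=u(z_0)$. Since $u\in PSH(\mathcal{V}_1)$ it is in particular strongly upper semicontinuous, so $u(z_0)=u^{*\reg\mathcal{V}_1}(z_0)$; pick $z_n\in\reg\mathcal{V}_1\setminus S$ with $z_n\to z_0$ and $u(z_n)\to u(z_0)$ (possible because $S$ is nowhere dense in the manifold $\reg\mathcal{V}_1$). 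Then $w_n:=f(z_n)\in\reg\mathcal{V}_2$, $w_n\to w_0$, and $v(w_n)\geq u(z_n)$, giving $v^{*\reg\mathcal{V}_2}(w_0)\geq\limsup u(z_n)=u(z_0)=v(w_0)$; the reverse inequality $v^{*\reg\mathcal{V}_2}\leq v^{*\mathcal{V}_2}$ combined with upper semicontinuity of $v$ on $\mathcal{V}_2$ (which we get from the Lemma~\ref{2regularizations}-type argument, or directly: $v$ is locally bounded above near $\sing\mathcal{V}_2$ since $u$ is, and a fibrewise-max of an usc function over a proper map with the Proper Mapping structure is usc) finishes it. Hence $v\in psh(\mathcal{V}_2)$.

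The final assertion is then immediate from Corollary~\ref{psh=PSH}: if $\mathcal{V}_2$ is locally irreducible and pure-dimensional, then $psh(\mathcal{V}_2)\subset PSH(\mathcal{V}_2)$, so $v\in PSH(\mathcal{V}_2)$.

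The main obstacle is the careful verification that $v\in PSH(\reg\mathcal{V}_2)$, i.e. that the fibrewise maximum is genuinely plurisubharmonic (not merely the psh regularization of something) across the branch locus of the covering $\reg\mathcal{V}_1\setminus S\to\reg\mathcal{V}_2$, together with the parallel bookkeeping that $v$ is upper semicontinuous on all of $\mathcal{V}_2$ including $\sing\mathcal{V}_2$. The cleanest route is to prove first that $v$ is usc on $\mathcal{V}_2$ and locally bounded above (using properness of $f$ and $u\in PSH(\mathcal{V}_1)$, hence $u$ locally bounded above), then that $v\in PSH(\reg\mathcal{V}_2)$ by the local branched-covering argument on the manifold $\reg\mathcal{V}_2$ (away from $f(\sing\mathcal{V}_1)$ it is a local biholomorphism-to-sheets picture; across $f(\sing\mathcal{V}_1\setminus S)\cap\reg\mathcal{V}_2$ one uses that a bounded function that is psh off a nowhere dense analytic set and usc everywhere is psh), and finally that strong upper semicontinuity holds by the fibrewise argument above. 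Throughout I would lean on the topological lemma about the two regularizations commuting under a continuous open proper surjection (Lemma~\ref{2regularizations}) to keep the usc-regularization bookkeeping clean.
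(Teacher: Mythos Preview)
Your outline parallels the paper's, but you miss its main simplification: after extending $u$ to $\CC^N$ via Theorem~\ref{extension}, the paper immediately reduces to \emph{continuous} $u$ by approximating with a decreasing sequence of continuous plurisubharmonic functions on $\CC^N$. For continuous $u$ one checks directly that $v$ is continuous on $\V_2$ (using that $f$ is open and closed with finite fibres), which makes both upper semicontinuity and strong upper semicontinuity automatic and reduces the plurisubharmonicity of $v$ on $\reg\V_2$ to a single removable-singularity step across an explicitly constructed pluripolar set $Z=f(\sing\V_1)\cup\sing\V_2\cup f(T)$, with $T$ the rank-drop locus. Your direct attack on general $u$ can be made rigorous, but the regularization bookkeeping is noticeably heavier than your sketch indicates.

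There is also a concrete error in your main paragraph: with $S=f^{-1}(\sing\V_2)$ the restriction $f\colon \reg\V_1\setminus S\to\reg\V_2$ is \emph{not} proper in general, because points of $\sing\V_1$ may map into $\reg\V_2$ (e.g.\ $\V_1=\{y^2=x^3\}\subset\CC^2$, $\V_2=\CC$, $f(x,y)=x$: here $S=\emptyset$ but $f|_{\reg\V_1}$ is not proper near the cusp). You acknowledge this in your final paragraph and propose removing $f(\sing\V_1)$ as well, which is precisely the paper's choice of bad set; so the correction is right, but the earlier claim should be struck. Your alternative route to surjectivity (Proper Mapping Theorem plus irreducibility of $\V_2$, rather than the Open Mapping Theorem) is valid and in fact sidesteps an implicit appeal to local irreducibility in that step. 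Finally, the topological lemma about two regularizations that you invoke is not available in the paper, so that reference should be dropped.
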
 
 
Sometimes such a function $v$ is referred to as the \emph{push-forward of $u$ under $f$}. A convenient notation is $f[u]:=v$.
 
\begin{proof} 
The fibres of $f$ are compact analytic sets in $\CC^N$, and so they are finite (see e.g. \cite[Thm.14.3.1]{Rud}). 
Since the mapping $f$ is both closed (being proper) and open (due to Theorem \ref{open-map-thm}), it is surjective (see Corollary \ref{wymiar_obrazu}). Consequently $v$ is well-defined. Furthermore, due to Theorem \ref{extension}, we may assume without loss of generality that $u\in PSH(\CC^N)$. Now, since any plurisubharmonic function on $\CC^N$ can be approximated by a decreasing sequence of continuous plurisubharmonic functions on $\CC^N$, we may also assume that 
$u$ itself is continuous. If this is the case, then also the function $v$ is continuous. Indeed, if $a,b\in\RR$ and $a<b$, we have
\[
v^{-1}\big((a,b)\big)=f\Big(u^{-1}\big((a,\infty)\big)\Big)
\setminus f\Big(u^{-1}\big([b,\infty)\big)\Big)
\]
and the set on the right-hand side is open. 

Note that if $W$ is a nowhere dense closed subset of $\V_2$, then $f^{-1}(W)$ is a nowhere dense closed subset of $\V_1$, and moreover
\[
f\big[u^{*(\V_1\setminus f^{-1}(W))}\big]\equiv f[u]\equiv\left(f[u]\right)^{*(\V_2\setminus W)}.
\]

Let 
\begin{align*}
S&:=f^{-1}\big(f(\sing\mathcal{V}_1)\cup \sing\V_2\big),\\
T&:=\{z\in\reg\mathcal{V}_1\setminus S\,:\,\rank_zf<\dim\V_2\},\\
Z&:=f\big(S\cup T\big)=f(\sing\mathcal{V}_1)\cup \sing\V_2\cup f(T).
\end{align*}
Now, because of Theorem \ref{proper-map-thm}, $f(\sing\mathcal{V}_1)$ is an analytic subset of $\V_2$ and hence of $\mathbb{C}^M$. So, $S$ is an analytic subset of $\V_1$ and hence of $\mathbb{C}^N$. Moreover, $S$ is nowhere dense in $\V_1$ since $f$ is open. Next, $\mathcal{V}_1\setminus S=\reg\mathcal{V}_1\setminus S$ is a closed submanifold of $\mathbb{C}^N\setminus S$ and $f$ restricted to $\reg\mathcal{V}_1\setminus S$ is a proper holomorphic mapping with values in $\mathcal{V}_2\setminus f(S)=\reg\mathcal{V}_2\setminus f(\sing\mathcal{V}_1)$, which is a closed submanifold of $\mathbb{C}^M\setminus f(S)$.  
By Theorem \ref{proper-map-thm}, $f(T)$ is an analytic subset of $\mathcal{V}_2\setminus f(S)$. Consequently, $Z$ is a closed nowhere dense subset of $\V_2$, which is the union of an analytic subset of $\V_2$ and an analytic subset of $\reg\mathcal{V}_2\setminus f(\sing\mathcal{V}_1)$. In particular, $Z$ is a pluripolar subset of $\V_2$. Also, $f^{-1}(Z)$ is nowhere dense in $\V_1$. At each point of $\V_1\setminus f^{-1}(Z)$ the mapping 
\begin{equation*}
g:=f\big|_{\V_1\setminus f^{-1}(Z)}\,:\,\V_1\setminus f^{-1}(Z)\longrightarrow\V_2\setminus Z
\end{equation*}
has maximal rank and hence is locally biholomorphic on the set $\V_1\setminus f^{-1}(Z)$, because of the Inverse Mapping Theorem. Furthermore, $g$ is proper. Consequently, all of the sets
\[
S_m:=\{w\in\V_2\setminus Z\,:\,\#f^{-1}(w)=m\},\qquad m=1,2,\ldots,
\]
are open in $\V_2\setminus Z$. This can be seen as follows. Take $m$ such that $S_m\neq\emptyset$. Let $w\in S_m$ and $g^{-1}(w)=\{z_1,\ldots,z_m\}$. Then, by the Inverse Mapping Theorem, there exist mutually disjoint neighborhoods $A_1,\ldots,A_m$ of $z_1,\dots,z_m$, respectively, and a neighborhood $B$ of $w$, such that $g\big|_{A_j}:A_j\longrightarrow B$ is a homeomorphism for each $j=1,\ldots,m$. We will be done if we can show that it is impossible to find a sequence $W=\{w_n\in B\,:\,n\in\mathbb{N}\}$ which is convergent to $w$ and such that 
$\#g^{-1}(w_n)>m$ for all $n\in\mathbb{N}$. Let us suppose that such a sequence exists. Then the compact set
$g^{-1}(W\cup\{w\})\setminus (A_1\cup\ldots A_m)$ would contain a convergent sequence $\{\xi_{i_n}\,:\,n\in\mathbb{N}\}$, such that $g(\xi_{i_n})=w_{i_n}$ for all $n$. But then $g(\lim_{n\to\infty}\xi_{i_n})=w$
and $\lim_{n\to\infty}\xi_{i_n}\not\in A_1\cup\ldots A_m$, which would contradict the fact that $w\in S_m$.

Since $Z$ is pluripolar in $\V_2$, the set $\V_2\setminus Z$ is connected by Corollary \ref{pluripolar_connectedness}. Consequently, only one of the sets $S_m$ is non-empty. In other words, there exists $n\in\mathbb{N}$ such that the mapping $g$ is an $n$-to-one local homeomorphism.
  
We have to show that $v$ is weakly plurisubharmonic. First, note that $v$ is plurisubharmonic on $\V_2\setminus Z$ because the mapping $g$ is locally biholomorphic. Secondly, the removable singularity theorem for plurisubharmonic functions \cite[Thm 2.9.22]{Kl} transfers easily to complex manifolds. Thus, since $v$ is continuous and $Z\cap\reg\V_2$ is pluripolar in $\reg\V_2$, the function $v$ is actually plurisubharmonic on $\reg\V_2$. This is enough to conclude that $v$ is weakly plurisubharmonic on $\V_2$, as required. Now, the last statement of the lemma is true in view of Demailly's Theorem \ref{Dem_cor}.
\end{proof}
\vskip 4mm

\section{Pluricomplex Green's function on analytic sets}

Let $\V$ be an analytic set of positive pure dimension in $\CC^N$.
\subsection{The Lelong classes on $\V$} We have three natural ways to define the three {\it Lelong classes on} $\V$:  
\begin{align*}
\mathcal{L}(\V)&:=\{u\in PSH(\V) \,: \,  u(z) - \log(1+\|z\|) < c \mbox{ \ on \ } \V \mbox{ \ for some \ } c=c(u)\},\\
\mathcal{L}'(\V)&:=\{u\in psh(\V) \, : \,  u(z) - \log(1+\|z\|) < c \mbox{ \ on \ } \V \mbox{ \ for some \ } c=c(u)\},\\
\mathcal{L}(\CC^N)_{|_\V}&:=\{u_{|_\V} \, : \, u\in PSH(\CC^N), \  u(z) - \log(1+\|z\|) < c \mbox{ \ on \ }  \CC^N \mbox{ \ for some \ } c=c(u)\},
\end{align*}
where $\|\cdot\|$ is a fixed norm in $\CC^N$, say the Euclidean one.

By Demailly's theorem, we have $\mathcal{L}'(\V)\subset \mathcal{L}(\V)$ for a locally irreducible set $\V$. Example \ref{ex.easy} shows that this inclusion is not true in general. 

\vskip 1mm

Obviously, we have the inclusion \ $\mathcal{L}(\CC^N)_{|_\V} \subset \mathcal{L}(\V)$.
A full characterization of functions in $\mathcal{L}(\A)$ that belong to $\mathcal{L}(\CC^N)_{|_\A}$ for an algebraic set $\A\subset\CC^N$ was discovered by Coman, Guedj and Zeriahi in \cite[Prop.3.1]{CGZ}. As a consequence, they obtained the following sufficient condition for the equality $\mathcal{L}(\CC^N)_{|_\A} = \mathcal{L}(\A)$.  

\begin{thm}
	If $\A$ is an algebraic set in $\CC^N$ and the germs $\overline{\A}_a$ are irreducible for all \linebreak $a\in \overline{\A}\setminus\A$, then \[\mathcal{L}(\CC^N)_{|_\A} = \mathcal{L}(\A),\] where $\overline{\A}$ is the closure of $\A$ in the complex projective space $\PP^N$. 
\end{thm}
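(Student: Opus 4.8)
Only the inclusion $\mathcal{L}(\A)\subseteq\mathcal{L}(\CC^N)_{|_\A}$ requires an argument; the opposite one is immediate, since $PSH(\A)=PSH(\CC^N)_{|_\A}$ by Theorem~\ref{extension} and logarithmic growth on $\CC^N$ is inherited on $\A$. The plan is to transport everything to the projective compactification, which is the setting of the Coman--Guedj--Zeriahi characterization \cite[Prop.~3.1]{CGZ} on which the statement rests. Embed $\CC^N\subset\PP^N$, let $H=\PP^N\setminus\CC^N$ be the hyperplane at infinity, and let $\overline\A\subset\PP^N$ be the closure of $\A$; then $\overline\A$ is a pure $\ell$-dimensional projective algebraic variety ($\ell=\dim\A$) with $\overline\A\cap\CC^N=\A$, and $Z:=\overline\A\setminus\A=\overline\A\cap H$ is a proper analytic, hence nowhere dense and pluripolar, subset of $\overline\A$. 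Using the standard bijection between $\mathcal{L}(\CC^N)$ and the $\omega_{FS}$-plurisubharmonic functions on $\PP^N$ (subtract the Fubini--Study potential $\tfrac12\log(1+\|z\|^2)$), a function $u\in\mathcal{L}(\A)$ corresponds to an $\omega_{FS}|_{\overline\A}$-plurisubharmonic function $\hat u$ on $\A=\overline\A\setminus Z$, and the crucial point is that the growth estimate $u(z)\le\log(1+\|z\|)+c$ on $\A$ is exactly equivalent to $\hat u$ being locally bounded above near every point of $Z$. Consequently $u\in\mathcal{L}(\CC^N)_{|_\A}$ if and only if $\hat u$ extends across $Z$ to an $\omega_{FS}|_{\overline\A}$-plurisubharmonic function on $\overline\A$ --- such a function then extends to an $\omega_{FS}$-plurisubharmonic function on $\PP^N$, i.e.\ comes from some $v\in\mathcal{L}(\CC^N)$ with $v_{|_\A}=u$.

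So it remains to extend, for a fixed $u\in\mathcal{L}(\A)$, the locally bounded above plurisubharmonic function $\hat u$ from $\overline\A\setminus Z$ across $Z$. This is a local problem near each $a\in Z$; working in an affine chart of $\PP^N$ around $a$ in which $\omega_{FS}$ has a smooth local potential, $\hat u$ becomes (up to adding that smooth function) an ordinary, locally bounded above plurisubharmonic function on $(\overline\A\cap U)\setminus Z$. Across the regular points $Z\cap\reg\overline\A$ it extends plurisubharmonically by the removable-singularity theorem for plurisubharmonic functions on complex manifolds --- the statement \cite[Thm.~2.9.22]{Kl}, transferred to $\reg\overline\A$, already used in the proof of Lemma~\ref{basic_lemma}. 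The genuine difficulty lies in the extension across the singular points $Z\cap\sing\overline\A$, and it is here that the hypothesis is used. If $\overline\A$ had several local branches at some point $a\in Z$ and $\hat u$ behaved differently along them near $a$, no plurisubharmonic extension could exist --- this is precisely the pathology of Example~\ref{ex.easy}. The assumption that each germ $\overline\A_a$ with $a\in\overline\A\setminus\A$ is irreducible means that $\overline\A$ has a single local branch at every point at infinity, which removes this obstruction: the natural candidate extension, the upper regularization $\hat u^{*\reg\overline\A}$, is then weakly plurisubharmonic, and by the local irreducibility of $\overline\A$ at $a$, Demailly's Theorem~\ref{Dem_cor} (see also Corollary~\ref{psh=PSH}) promotes it to an honest plurisubharmonic function in a neighbourhood of $a$ (when $\overline\A$ fails to be locally irreducible at points of $\A$ clustering at $a$, one first carries this out on the normalization of $\overline\A$, which is normal, and then transfers the extension down across $Z$). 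Gluing the local extensions --- which all coincide with $\hat u$ on the dense open set $\A$ --- gives the required $\omega_{FS}|_{\overline\A}$-plurisubharmonic extension to $\overline\A$, and translating it back to $\CC^N$ yields $v\in\mathcal{L}(\CC^N)$ with $v_{|_\A}=u$.

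The hard part is thus the extension of $\hat u$ across $Z\cap\sing\overline\A$ on the possibly non-normal variety $\overline\A$: one must convert an a priori only branch-wise, weakly plurisubharmonic candidate into a genuine plurisubharmonic function, and it is exactly the irreducibility of the germs of $\overline\A$ at infinity --- fed into Demailly's theorem --- that makes this possible; this is the substance of \cite[Prop.~3.1]{CGZ}. The remaining ingredients (the projective reformulation, the equivalence between logarithmic growth and local boundedness near $Z$, the removable-singularity step over $\reg\overline\A$, and the extension from $\overline\A$ to all of $\PP^N$) are standard within the framework already developed in the paper and in \cite{CGZ}.
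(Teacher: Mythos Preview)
The paper does not prove this theorem at all: it is quoted, without argument, as a consequence of \cite[Prop.~3.1]{CGZ}. So there is no ``paper's own proof'' to compare against; what you have written is a sketch of the Coman--Guedj--Zeriahi argument itself. As such it is essentially correct and hits the right ideas --- passage to $\PP^N$, the identification of the logarithmic growth condition on $\A$ with local boundedness of the $\omega_{FS}$-transform near $Z=\overline\A\setminus\A$, and the use of Demailly's theorem to extend across $Z$ under the irreducibility hypothesis on the germs. One point that would need tightening in a full proof: Theorem~\ref{Dem_cor} as stated in the paper requires local irreducibility at \emph{every} point of the analytic set, whereas the hypothesis only gives irreducibility of the germ $\overline\A_a$ at points $a\in Z$, not at nearby points of $\A$. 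Your parenthetical about passing to the normalization gestures at the fix but does not carry it out; in \cite{CGZ} this is handled cleanly by their Prop.~3.1, which characterizes the obstruction to extension branch-by-branch at each $a\in Z$ --- when the germ has a single branch, the obstruction is vacuous.
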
	

With the help of this result, a counterexample to the inclusion $\mathcal{L}(\V)\subset \mathcal{L}(\CC^N)_{|_\V}$ can be constructed.

\begin{example} (see \cite[Ex.3.3]{CGZ})
Consider the algebraic set $\A=\{ (w,z)\in \CC^2 \ : \ wz=1+w^3 \}$ and the function $u(w,z)=\max\{ -\log |w|, 1+2\log |w| \}$. We see that $u\in PSH((\CC\setminus \{0\})\times \CC)$ and \ $u_{|_\A}=\tilde{u}_{|_\A}$, where $\tilde{u}(w,z)=\max\{ \log|z-w^2|, 1+2\log |w| \}\in PSH(\CC^2)$. The function $u$ is of logarithmic growth on $\A$ and so $u\in \mathcal{L}(\A)$. On the other hand, 
$\tilde{u}\notin \mathcal{L}(\CC^2)$ and $u\not\in\mathcal{L}(\CC^2)_{|_\A}$.
\end{example}

Since the set $\A$ from the above example does not have singular points, we get $psh(\A)=PSH(\A)$, and so $\mathcal{L}(\A)=\mathcal{L}'(\A)$. Therefore, the example shows that also the inclusion \ $\mathcal{L}'(\V)\subset \mathcal{L}(\CC^N)_{|_\V}$ does not hold in general. 

\vskip 2mm

\subsection{Pluricomplex Green functions on $\V$}
Let $K$ be a subset of an analytic set $\mathcal{V}$ of positive pure dimension in $\CC^N$. The pluricomplex Green function $V_K$ is defined by (\ref{V_K}). By Siciak's theorem $V_K^*=+\infty$, because $K$ is pluripolar as a subset of an analytic set. However, $V_K$ can be locally bounded on $\V$. The fundamental result in this context is the following Sadullaev's theorem:

\begin{thm}\cite[Prop.2.1, Thm.2.2]{Sad83} \label{lok_bound_VK}
	Let $\V$ be an irreducible analytic set in $\CC^N$ and $K$ be a compact subset of $\V$. Then $V_K$ is locally bounded from above on $\V$ if and only if $\V$ is an algebraic set and $K$ is non-pluripolar on $\V$.
\end{thm}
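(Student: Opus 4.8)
The plan is to establish the two implications separately. One direction is immediate: if $K$ is pluripolar on $\V$, then by Theorem~\ref{pluripolar} there is $\psi\in\mathcal{L}(\CC^N)$ with $\psi|_\V\not\equiv-\infty$ and $\psi\equiv-\infty$ on $K$; choosing $z_0\in\V$ with $\psi(z_0)>-\infty$, for every $t>0$ the shifted function $\psi+t$ still lies in $\mathcal{L}(\CC^N)$ (the admissible constant in the definition of the Lelong class merely shifts by $t$) and is $\le 0$ on $K$, so $V_K(z_0)\ge\psi(z_0)+t$ for all $t$, i.e. $V_K(z_0)=+\infty$. Hence local boundedness of $V_K$ on $\V$ forces $K$ to be non-pluripolar on $\V$. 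The two remaining, substantial parts — that local boundedness forces $\V$ to be algebraic, and that algebraicity together with non-pluripolarity suffices — both go through the Siciak--Zaharjuta formula~\eqref{Siciak-Za} and the Rudin--Sadullaev criterion (Theorem~\ref{Rudin_Sadullaev}).

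For ``$V_K$ locally bounded above on $\V$ $\Rightarrow$ $\V$ algebraic'' (the more delicate direction, following Sadullaev): by~\eqref{Siciak-Za}, $V_K=\log\Phi_K$, so the hypothesis says that for each $R$ there is $M_R$ with $|p(z)|\le M_R^{\deg p}$ whenever $\|p\|_K\le 1$ and $z\in\V$, $\|z\|\le R$. Testing this on the degree-one polynomials $z\mapsto(z_j-a_j)/\rho_j$ (with $a\in K$ and $\rho_j$ large) already gives $V_K(z)\ge\log\|z\|-C$ for large $\|z\|$ on $\V$, so $V_K$ is an exhaustion of $\V$. I would then pick a generic linear projection $\pi\colon\CC^N\to\CC^\ell$, $\ell=\dim\V$ (automatically proper on $\V$, since the asymptotic cone of $\V$ has dimension $\le\ell$), write $\CC^N=X\oplus Y$ with $X=\ker\pi$, and upgrade local boundedness to at-most-logarithmic growth of $V_K$ along $\V$ (using the subharmonicity of $V_K$ along analytic discs in $\reg\V$ escaping to infinity, together with $V_K\equiv 0$ on $K$); combining this with the exhaustion yields a Sadullaev inequality $\|x\|\le C(1+\|\pi(z)\|^c)$ on $\V$, and Theorem~\ref{Rudin_Sadullaev} then gives that $\V$ is algebraic.

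For ``$\V$ algebraic and $K$ non-pluripolar $\Rightarrow$ $V_K$ locally bounded above on $\V$'': Theorem~\ref{Rudin_Sadullaev} supplies a splitting $\CC^N=X\oplus Y$, $\dim Y=\ell$, with $\|z\|\le C(1+\|\pi(z)\|)$ on $\V$ and $\pi\colon\V\to Y\cong\CC^\ell$ proper, surjective and finite. Fix $R$ with $K\subset L_R:=\pi^{-1}(\overline{B}(0,R))\cap\V$, a compact subset of $\V$. Given $u\in\mathcal{L}(\CC^N)$ with $u\le 0$ on $K$, the push-forward $g:=\pi[u|_\V]$ is plurisubharmonic on $\CC^\ell$ by Lemma~\ref{basic_lemma} and, by the Sadullaev inequality, of logarithmic growth; since $r\mapsto\sup_{\|w\|\le r}g(w)$ is convex increasing in $\log r$, its logarithmic type is at most $\sup_{L_R}u-\log R$, whence $u(z)\le\log(1+\|z\|)+\sup_{L_R}u-\log R$ for every $z\in\V$. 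Taking the supremum over all admissible $u$ reduces the claim to the single estimate $\sup_{L_R}V_K<\infty$, i.e. to a Bernstein--Walsh inequality on $\V$: $|p(z)|\le M^{\deg p}$ for $z\in L_R$ and $\|p\|_K\le1$. This I would prove by elimination theory (using that $\V$ is algebraic): attach to $p$ the monic polynomial $P(w,t)=t^d+a_1(w)t^{d-1}+\cdots+a_d(w)$, $d=\deg\pi$, with $\deg a_j\le c_\V\,j\deg p$, whose roots over $w=\pi(z)$ are the values of $p$ on the fibre $\pi^{-1}(w)\cap\V$; then $p(z)$ is one of those roots, so $|p(z)|\lesssim\max_j|a_j(\pi(z))|^{1/j}$, and since $\pi(K)$ is non-pluripolar in $\CC^\ell$ (pull a Lelong-class witness back along $\pi$ via Proposition~\ref{a,b} and Theorem~\ref{pluripolar} if it were pluripolar), a classical Bernstein--Walsh inequality downstairs bounds the $|a_j|$ on $\overline{B}(0,R)$ by $\|a_j\|_{\pi(K)}$ up to a factor $M_0^{\deg a_j}$; finally $\|a_j\|_{\pi(K)}$ is estimated in terms of $\|p\|_K$ after replacing $K$ by a non-pluripolar compact subset on which $\pi$ is one-to-one.

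The main obstacle is precisely this Bernstein--Walsh inequality on $\V$: because $\pi$ is $d$-sheeted over $\pi(K)$, the hypothesis $\|p\|_K\le1$ controls $p$ on only part of each fibre, not the symmetric functions $a_j$ of all fibre values, so one must first reduce to a non-pluripolar compact $K_0\subseteq K$ on which $\pi$ is injective — obtained from a measurable section of $\pi|_K$ over $\pi(K)$ minus the branch locus, whose non-pluripolarity is preserved because images of complete pluripolar sets under the finite proper map $\pi$ are pluripolar (seen via the trace $w\mapsto\sum_{z\in\pi^{-1}(w)}u(z)$, using Corollary~\ref{pluripolar_connectedness}). This, and the companion point in the ``$\Rightarrow$ algebraic'' direction — upgrading mere local boundedness of $V_K$ to genuine logarithmic growth along $\V$ — are exactly the places where Sadullaev's original argument for curves (where $\pi$ is single-sheeted) does not carry over verbatim.
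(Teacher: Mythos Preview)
The paper does not supply its own proof of this theorem: it is quoted verbatim as Sadullaev's result with the citation \cite[Prop.~2.1, Thm.~2.2]{Sad83} and no argument is given, so there is no in-paper proof to compare your proposal against. (The paper's later remark that ``the proof given in \cite{Sad83}\ldots works only for algebraic curves'' refers to a \emph{different} statement, Theorem~\ref{VKV=VK}, not to this one.)

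That said, your sketch has a genuine circularity in the ``$V_K$ locally bounded $\Rightarrow$ $\mathcal{V}$ algebraic'' direction. You assert that a generic linear projection $\pi:\CC^N\to\CC^\ell$ is ``automatically proper on $\mathcal{V}$, since the asymptotic cone of $\mathcal{V}$ has dimension $\le\ell$''. For an arbitrary irreducible analytic set this is false, and in fact the existence of a proper linear projection with a polynomial bound is essentially what Theorem~\ref{Rudin_Sadullaev} says characterizes algebraicity. Concretely, for $\mathcal{V}=\{(z,e^{z}):z\in\CC\}\subset\CC^{2}$ the projection $(z,w)\mapsto az+bw$ is proper only when $b=0$; a generic choice gives infinite fibres. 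So you cannot pick $\pi$ before you know $\mathcal{V}$ is algebraic. The actual mechanism (in Sadullaev's argument) runs the other way: one first exploits the hypothesis to show that $V_K$, restricted to $\mathcal{V}$, has genuine logarithmic growth at infinity, and it is \emph{this} two-sided estimate $\log\|z\|-C\le V_K(z)\le \log\|z\|+C'$ on $\mathcal{V}$ that manufactures the Sadullaev cone~\eqref{Sadullaev} and hence the proper projection, not a genericity argument.

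Your treatment of the converse direction is broadly in the right spirit (push forward along the Sadullaev projection and invoke Bernstein--Walsh in $\CC^\ell$), and you correctly flag the real difficulty: controlling symmetric functions of fibre values from $\|p\|_K$ when $K$ meets each fibre only partially. The paper itself sidesteps all of this by citing Sadullaev; the only fragment it reproduces is the explicit formula $V_{B_r}(z)=\log^{+}(\|z_y\|/r)$ in Example~\ref{nplp_on_A}, which already yields local boundedness for sets of the form $\mathcal{A}\cap\bar B(a,\rho)$.
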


In addition to the above function $V_K$, we will study here the {\it pluricomplex Green function} 
{\it of $K$ on an analytic set $\mathcal{V}$} that is defined as
\[ V_{K,\mathcal{V}}(z):=\sup\{u(z)\: : \: u\in \mathcal{L}(\mathcal{V}) \mbox{ \ and \ } u_{|_K}\le 0\}, \mbox{ \ \ \ \ } z\in \mathcal{V}.\] 
The compactness of $K$ is usually assumed, but  it is not necessary for the above definition and some basic properties of pluricomplex Green functions. 
 
\vskip 2mm 
 
One can ask about a relationship between $V_{K,\V}$ and the pluricomplex Green function $V_K$. Clearly, \ $V_{K,\CC^N}=V_K$. Moreover, $V_K$ and $V_{K,\V}$ are identical on $\V$, as we can see in the following

\begin{thm} \label{VKV=VK}
	Let $\mathcal{V}$ be an analytic set of positive pure dimension in $\CC^N$ and $K$ be a compact subset of $\V$. Then
	\[ V_{K,\V}(z) = V_K(z),\quad z\in \V. \]
    Consequently, $\exp(V_{K,\V})$ is equal to the Siciak extremal function $\Phi_K$ on the set $\V$.
	Furthermore, if $\V$ is irreducible, then $V_{K,\V}$ is locally bounded from above if and only if $\V$ is algebraic and $K$ is a non-pluripolar subset of $\V$. \end{thm}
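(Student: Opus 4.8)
The plan is to establish the two inequalities $V_K\le V_{K,\V}$ and $V_{K,\V}\le V_K$ on $\V$ separately, and then to read off the remaining two assertions. The inequality $V_K\le V_{K,\V}$ on $\V$ is immediate: if $u\in\mathcal{L}(\CC^N)$ and $u\le 0$ on $K$, then, since $PSH(\V)=PSH(\CC^N)_{|_{\V}}$ by Theorem \ref{extension}, the restriction $u_{|_{\V}}$ lies in $PSH(\V)$, it retains the logarithmic growth bound on $\V$, and it vanishes on $K$; hence $u_{|_{\V}}\in\mathcal{L}(\V)$ is admissible for $V_{K,\V}$, and passing to the supremum over all such $u$ gives $V_K(z)\le V_{K,\V}(z)$ for $z\in\V$.

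For the reverse inequality the substantive case is when $\V$ is algebraic and $K$ is non-pluripolar on $\V$; the degenerate cases ($\V$ non-algebraic, or $K$ pluripolar on $\V$) I would dispose of by using Theorem \ref{pluripolar} to produce $w\in\mathcal{L}(\CC^N)$ with $w_{|_{\V}}\not\equiv-\infty$ and $w\equiv-\infty$ on $K$, so that $w+c$ is admissible for both $V_K$ and $V_{K,\V}$ for every $c$: both functions then equal $+\infty$ on the dense subset $\V\setminus\{w=-\infty\}$ of $\V$, and equality on the remaining pluripolar set follows from the plurisubharmonic regularizations on $\reg\V$, Corollary \ref{pluripolar_connectedness}, and Theorem \ref{lok_bound_VK}. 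So assume $\V$ is algebraic of dimension $\ell$ and $K$ is non-pluripolar on $\V$. By the Rudin--Sadullaev Theorem \ref{Rudin_Sadullaev} I would choose linear coordinates $(x,y)\in\CC^{N-\ell}\times\CC^{\ell}$ with $\V\subset\{\|x\|\le C(1+\|y\|)\}$, so that the projection $\pi\colon\V\ni(x,y)\mapsto y$ is proper and surjective and $\log(1+\|z\|)$ and $\log(1+\|\pi(z)\|)$ differ by a bounded quantity on $\V$. Given $u\in\mathcal{L}(\V)$ with $u\le 0$ on $K$, Lemma \ref{basic_lemma} (applied on the irreducible components of $\V$, each of which maps properly onto all of $\CC^{\ell}$, and then taking the maximum of the resulting push-forwards) yields $v(y):=\max\{u(z):z\in\pi^{-1}(y)\}\in PSH(\CC^{\ell})$, and $v\in\mathcal{L}(\CC^{\ell})$ by the growth comparison. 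For $y\in\pi(K)$ the fibre $\pi^{-1}(y)$ lies in the compact set $\widehat K:=\pi^{-1}(\pi(K))\cap\V$, whence $v\le M:=\max_{\widehat K}u$ on $\pi(K)$, so $v-M\le V_{\pi(K)}$ on $\CC^{\ell}$. Testing the Siciak--Zaharjuta formula for $V_{\pi(K)}$ against polynomials $q$ in the variables $y$ --- each of which, composed with the linear projection $\CC^N\to\CC^{\ell}$, is a polynomial on $\CC^N$ of the same degree with the same supremum norm on $K$ --- gives $V_{\pi(K)}(\pi(z))\le V_K(z)$, hence $u(z)\le v(\pi(z))\le V_K(z)+M$ on $\V$. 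If $K$ is saturated with respect to $\pi$, i.e. $\widehat K=K$, then $M=\max_K u\le 0$ and the inequality $V_{K,\V}\le V_K$ follows on $\V$.

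The step I expect to be the main obstacle is removing the constant $M$ for an \emph{arbitrary} compact $K\subset\V$; this is exactly where Sadullaev's argument, which compactifies a curve to a Riemann surface, fails to carry over to higher dimensions. Even though $\mathcal{L}(\V)$ is strictly larger than $\mathcal{L}(\CC^N)_{|_{\V}}$ in general --- as the example of Coman--Guedj--Zeriahi shows --- the natural expectation is that the extremal function $V_{K,\V}^{*\reg\V}$ itself extends to $\CC^N$ within the Lelong class, equivalently that $V_{K,\V}(z)=\sup\{\tfrac1{\deg p}\log|p(z)|\colon p\in\CC[z],\ \|p\|_K\le 1\}$ for $z\in\V$; establishing this requires analysing the behaviour of $\V$ near the hyperplane at infinity in $\PP^N$ and exploiting the non-pluripolarity of $K$ through Theorem \ref{lok_bound_VK}, after which a homogenization argument absorbs $M$ into $V_K$.

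Finally, once $V_{K,\V}=V_K$ on $\V$ has been proved, the identity $\exp(V_{K,\V})=\Phi_K$ on $\V$ is just the Siciak--Zaharjuta formula (\ref{Siciak-Za}), and the local-boundedness characterization is precisely Sadullaev's Theorem \ref{lok_bound_VK} read through this identity.
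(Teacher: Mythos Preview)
Your proposal has a genuine gap that you yourself flag: you arrive at $u(z)\le V_K(z)+M$ with $M=\max_{\widehat K}u$ and then do not remove the constant $M$ except when $K$ is $\pi$-saturated. The vague plan you sketch---analysing $\V$ near the hyperplane at infinity and a ``homogenization argument''---is not a proof, and is in fact precisely the obstacle the paper warns about: the push-forward/projection route is Sadullaev's original strategy, which works only for curves. In higher dimension the maximal function $v=\pi_*u$ need not be pluri\emph{super}harmonic, so there is no mechanism to pin it against $V_K$ without the defect $M$. Your treatment of the degenerate cases is also incomplete: showing that both suprema blow up on a dense set does not give pointwise equality on the remaining pluripolar set, and the references you cite do not close that gap.

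The paper bypasses all of this with a completely different, uniform argument based on the Coman--Guedj--Zeriahi extension theorem (Theorem~\ref{CGZ}). Given $u\in\mathcal{L}(\V)$ with $u\le 0$ on $K$, write $u(z)<c(u)+\log(1+\|z\|)$ on $\V$. For each $b>1$, Theorem~\ref{CGZ} (with $f=u-c(u)$ and $g(z)=\log(1+\|z\|)$) produces $h_b\in PSH(\CC^N)$ with $h_b|_\V=u-c(u)$ and $h_b(z)<b\log(1+\|z\|)$. Then $c(u)+h_b$ is a competitor for the ``$b$-Lelong'' extremal function
\[
V_b(z):=\sup\{\tilde u(z):\tilde u\in PSH(\CC^N),\ \tilde u<c+b\log(1+\|\cdot\|)\text{ for some }c,\ \tilde u|_K\le 0\},
\]
so $u\le V_b$ on $\V$. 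A one-line rescaling shows $\tfrac1b V_b\equiv V_K$, hence $V_{K,\V}\le bV_K$ on $\V$ for every $b>1$, and letting $b\downarrow 1$ finishes. No case split (algebraic vs.\ non-algebraic, pluripolar vs.\ not) is needed, and no projection is used; the CGZ extension with growth control is exactly the missing idea that absorbs your constant $M$.
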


The above theorem is well-known, but in some papers other versions (not always correct) are 
presented. It first appeared as
Proposition 3.4 in \cite{Sad83} asserting the equality of locally bounded pluricomplex Green functions on algebraic curves with their global counterparts. After that proposition the author claims that the same argument would work in higher dimensions depending on a property of maximal plurisubharmonic functions. Unfortunately, the statement is incorrect as in dimensions higher than one maximal plurisubharmonic functions are generally not plurisuperharmonic. Here we present a proof that works in any dimension and is based on the following result by Coman, Guedj and Zeriahi.

\begin{thm} \label{CGZ} {\rm (see \cite[Thm.A]{CGZ})} \ 
	Let \ $f\in PSH(\V)$, \ $g\in PSH(\CC^N)$. \ If \ $f<g$ on $\V$ \ and $g$ is continuous in $\CC^N$, then for all \ $b>1$ \ there exists \ $h=h_b\in PSH(\CC^N)$ \ satisfying two conditions: 
	\[ h_{ \ |{\V}}=f \ \ \mbox{ and } \ \ h(z)< b\: \max\{0,g(z)\} \ \ \mbox{for } \ z\in\CC^N. \]
\end{thm}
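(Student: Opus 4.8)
The plan is to build $h$ by first producing \emph{some} global plurisubharmonic extension of $f$ and then correcting its growth so that it falls strictly below $b\max\{0,g\}$, without disturbing the values prescribed on $\V$. Two reductions simplify the target. First, since the conclusion only involves $\max\{0,g\}$ and since $f<g\le\max\{0,g\}$ on $\V$, I would replace $g$ by the continuous plurisubharmonic function $\max\{0,g\}$ and henceforth assume $g\ge0$, so that $b\max\{0,g\}=bg$; here $g$ and its positive multiples stay plurisubharmonic because $PSH(\CC^N)$ is a convex cone stable under $\max$. Second, if $\dim\V=N$ then $\V=\CC^N$ and $h=f$ works, so I may assume $\V$ is a proper, hence pluripolar, analytic subset of $\CC^N$. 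By Theorem~\ref{extension} I fix $\tilde f\in PSH(\CC^N)$ with $\tilde f|_\V=f$ (the standard decreasing approximation by continuous plurisubharmonic functions, as used in the proof of Lemma~\ref{basic_lemma}, is available should continuity be needed).

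The key geometric object is the open set $\Omega:=\{z\in\CC^N:\tilde f(z)<g(z)\}$. It contains $\V$, because $\tilde f|_\V=f<g$, and on it $\tilde f<bg$ holds strictly: indeed $\tilde f<g\le bg$ where $g>0$, while $\tilde f<0=bg$ where $g=0$. Thus $\tilde f$ already respects the bound on a full neighbourhood of $\V$, and the only task is to replace $\tilde f$ on $\CC^N\setminus\Omega$ by a plurisubharmonic function that remains strictly below $bg$ yet matches $\tilde f$ across $\partial\Omega$. I would do this through the classical gluing principle for plurisubharmonic functions (see \cite{Kl}): given any $u\in PSH(\CC^N)$ with $\limsup_{\Omega\ni w\to\zeta}\tilde f(w)\le u(\zeta)$ at every $\zeta\in\partial\Omega$, the function equal to $\max\{\tilde f,u\}$ on $\Omega$ and to $u$ on $\CC^N\setminus\Omega$ lies in $PSH(\CC^N)$. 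Since $\tilde f<g$ on $\Omega$ and $g$ is continuous, the gluing condition is implied by the single requirement $u\ge g$ on $\partial\Omega$, so the whole problem reduces to the construction of one auxiliary function $u$.

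The function $u$ should satisfy three constraints: $u\le f$ on $\V$ (so that the glued $h$ equals $f$ there), $u\ge g$ on $\partial\Omega$ (for the gluing), and $u<bg$ on $\CC^N$ (for the bound). The cleanest way to dispose of the first is to insist that $u=-\infty$ on $\V$; since $\V$ is pluripolar, such plurisubharmonic $u$ exist, and then $\max\{\tilde f,u\}=\tilde f=f$ on $\V$ automatically. With such a $u$ in hand the entire conclusion follows at once, \emph{including} the strict inequality on the delicate set $\{g=0\}$: on $\Omega$ both competitors are $<bg$, and off $\Omega$ one has $h=u<bg$ everywhere, with no touching of the bound where $g$ vanishes. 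Thus strictness is not really the difficulty; the difficulty is packed entirely into manufacturing $u$ with $u=-\infty$ on $\V$, $u\ge g$ along $\partial\Omega$, and $u<bg$ globally.

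The hard part will be precisely this manufacture, because the requirements are in genuine tension. A global plurisubharmonic function that is $-\infty$ on $\V$ is necessarily unbounded above (a Liouville-type obstruction: a function bounded above on $\CC^N$ is constant), so the corrector is forced to grow, and one must fit that forced growth under the budget $bg$ while keeping $u\ge g$ on $\partial\Omega$. The slack $b>1$ provides a margin of width $(b-1)g$ in which to place $u$, and this margin is ample wherever $g$ is large but shrinks to nothing near $\{g=0\}$; the decisive resource there is the \emph{strict} gap $g-f>0$ on $\V$ together with the continuity of $g$, which let me keep the transition region $\partial\Omega$ inside $\{g>0\}$ as far as possible and bound the barrier's growth in terms of $g$. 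Concretely I would take $u$ of the form $\beta g-\theta$ plus a pluripolar barrier attached to $\V$, with $1<\beta<b$ and $\theta>0$ small, localising the gluing and tuning $\beta$, $\theta$ and the barrier's Lelong-class growth against $g-f$, reducing first to $f$ bounded below by exhaustion and finally letting $b\downarrow1$. Verifying that these parameters can be balanced along the non-compact boundary $\partial\Omega$—in the low-$g$ region, and in the spirit of the connectedness argument of Corollary~\ref{pluripolar_connectedness}—is the technical heart of the proof, while the reductions, the definition of $\Omega$, the gluing, and the identity $h|_\V=f$ are routine.
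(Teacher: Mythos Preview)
The paper does not prove this statement at all: Theorem~\ref{CGZ} is quoted verbatim from \cite[Thm.~A]{CGZ} and invoked as a black box in the proof of Theorem~\ref{VKV=VK}. There is therefore no ``paper's own proof'' to compare your attempt against.

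As for the attempt itself, it is an outline rather than a proof, and you are candid about this: the construction of the auxiliary function $u$ (plurisubharmonic on $\CC^N$, identically $-\infty$ on $\V$, $\ge g$ on $\partial\Omega$, and $<bg$ everywhere) is left as ``the technical heart'' to be verified. This is not a routine verification. In fact, for a generic choice of the extension $\tilde f$ the three constraints on $u$ can be mutually inconsistent. Concretely, take $\V=\{z_1=0\}\subset\CC^2$, $f\equiv-1$, $g\equiv0$, and $\tilde f(z)=-1+\log^+|z_1|$; then $\partial\Omega=\{|z_1|=e\}$, on which the boundary limsup of $\tilde f$ equals $0$, so you need $u\ge0$ there while simultaneously $u<bg\equiv0$ globally. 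More structurally, when $g$ is bounded (e.g.\ $g\equiv0$) the requirement $u<bg$ forces $u$ to be bounded above, hence constant by Liouville, which is incompatible with $u\equiv-\infty$ on a positive-dimensional $\V$. Your gluing scheme therefore cannot succeed without either a much more careful choice of $\tilde f$ (which already presupposes a growth-controlled extension, i.e.\ the very content of the theorem) or a fundamentally different mechanism.

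The proof in \cite{CGZ} proceeds along quite different lines: it works in the projective compactification, treats $f$ and $g$ as quasi-plurisubharmonic functions on $\PP^N$, and produces the extension via a global envelope/regularization construction rather than a local gluing across $\partial\Omega$. Your reductions (replacing $g$ by $\max\{0,g\}$, invoking Theorem~\ref{extension} for \emph{some} extension) are sound and your identification of the difficulty is accurate, but the scheme as written does not close the gap.
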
	

\noindent {\it Proof of Theorem} \ref{VKV=VK}.
	Since \ $\mathcal{L}(\CC^N)_{|_\V} \subset \mathcal{L}(\V)$, \ we have \ $V_K\le V_{K,\mathcal{V}}$ on $\V$. We will show that $V_K(z)\ge V_{K,\mathcal{V}}(z)$ for $z\in\V$. Fix a function $u\in \mathcal{L}(\V)$ such that $u_{|_K}\le 0$. For some constant $c(u)$, we have \ $ u(z) - \log(1+\|z\|) < c(u) \mbox{ \ on \ } \V$. Set $f=u-c(u)$ and $g(z)=\log (1+||z||)$. If $\|\cdot\|$ is the Euclidean norm, then $g\in PSH(\CC^N)$ and is continuous in $\CC^N$. By Theorem \ref{CGZ}, for any $b>1$, we take a function $h=h_b\in PSH(\CC^N)$ such that \[ h_{|_\V} = u-c(u) \ \ \mbox{and} \ \ h_b(z)<b\: \max\{0,\log(1+\|z\|)\} = b\: \log(1+\|z\|), \ \ \ \ z\in \CC^N.\]  
	For a fixed $z\in \CC^N$, let 	
	\begin{multline*}
	V_b(z):=\sup\{ \tilde{u}(z)\: : \: \tilde{u}\in PSH(\CC^N), \ \tilde{u}(z)<c+ b\: \log(1+\|z\|) \ \mbox{for some $c$ and} \ z\in \CC^N, \ \tilde{u}_{|_K}\le 0\}\\
		\ge \sup\{ \tilde{u}(z)\: : \: \tilde{u}\in PSH(\CC^N), \ \tilde{u}(z)<c(u) + b\: \log(1+\|z\|) \ \mbox{for} \ z\in \CC^N, \ \tilde{u}_{|_K}\le 0\} \phantom{V}
\end{multline*}				
	and the function $c(u)+h_b(z)$ is an element of the family over which the last sup is taken. Therefore, for every $z\in \V$, we have	
	\[ V_b(z)\ge c(u)+h_b(z)=u(z). \]
	Since $u$ is an arbitrary function from $\mathcal{L}(\V)$ such that $u_{|_K}\le 0$, we get the inequality 
	\[ V_b\ge V_{K,\V} \ \ \ \ \mbox{on} \ \V \ \ \mbox{for any} \ \ b>1.\]
	On the other hand, 
	\begin{align*}
	\tfrac1b \: V_b(z) &=\sup\left\{ \tfrac{\tilde{u}(z)}b\: : \: \tilde{u}\in PSH(\CC^N), \ \tfrac{\tilde{u}(z)}b<\tfrac{c}b+ \log(1+\|z\|) \ \mbox{for some $c$ and} \ z\in \CC^N, \ \tilde{u}_{|_K}\le 0\right\}\\
	&= \sup\left\{ v(z)\: : \: v\in PSH(\CC^N), \ v(z)<d+ \log(1+\|z\|) \ \mbox{for some $d$ and} \ z\in \CC^N, \ v_{|_K}\le 0\right\}\\
	&=V_K(z), \ \ \ z\in \CC^N.
	\end{align*}
	Consequently, for any $z\in \V$,
	\[ V_K(z)= \tfrac1b\: V_b(z) \ge \tfrac1b \: V_{K,\V}(z). \]
	By letting $b$ tend to 1, the assertion follows. 
	
	The second conclusion of the theorem follows from Sadullaev's theorem above.\hfill $\Box$

\vskip 2mm

Consider also the pluricomplex Green function related to $\mathcal{L}'(\V)$
\[ U_{K,\mathcal{V}}(z):=\sup\{u(z)\: : \: u\in \mathcal{L}'(\mathcal{V}) \mbox{ \ and \ } u_{|_K}\le 0\}, \mbox{ \ \ \ \ } z\in \mathcal{V}.\]

\begin{thm}
	If $\V$ is a locally irreducible analytic set of positive pure dimension in $\CC^N$ and $K$ is a compact set in $\V$, then \[ U_{K,\V}^{ \ *\reg\V} = V_{K,\V}^{ \ *\reg\V} \ \ \ \ \mbox{and} \ \ \ \ U_{K,\V} \le V_{K,\V}  \ \ \ \ \mbox{on} \ \ \V. \]
\end{thm}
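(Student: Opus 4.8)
The plan is to show the two claims separately, with the inequality $U_{K,\V}\le V_{K,\V}$ coming essentially for free and the equality of the regularizations on $\reg\V$ requiring the real work. Since $\V$ is locally irreducible, Corollary \ref{psh=PSH} gives $psh(\V)\subset PSH(\V)$, hence $\mathcal{L}'(\V)\subset\mathcal{L}(\V)$; therefore every competitor in the supremum defining $U_{K,\V}$ is also a competitor for $V_{K,\V}$, and so $U_{K,\V}\le V_{K,\V}$ pointwise on $\V$. Taking upper semicontinuous regularizations along $\reg\V$ preserves this inequality, so $U_{K,\V}^{\ *\reg\V}\le V_{K,\V}^{\ *\reg\V}$ on $\V$; one direction of the equality is thus immediate.

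For the reverse inequality $V_{K,\V}^{\ *\reg\V}\le U_{K,\V}^{\ *\reg\V}$, the idea is that every $u\in\mathcal{L}(\V)$ can be ``repaired'' on $\sing\V$ without changing it on $\reg\V$ so as to land in $\mathcal{L}'(\V)$. First I would fix $u\in\mathcal{L}(\V)$ with $u_{|_K}\le 0$ and pass to $w:=u^{*\reg\V}$. Since $u\in PSH(\V)=PSH(\CC^N)_{|_\V}$, the remark following Theorem \ref{Dem_cor} (``if $u\in PSH(\V)$ then $u^{*\reg\V}\in psh(\V)$'') shows $w\in psh(\V)$. The function $w$ agrees with $u$ on $\reg\V$, it still has logarithmic growth (the regularization only lowers values on $\sing\V$, which is pluripolar and nowhere dense, and on $\reg\V$ it equals $u$, so the growth bound is inherited from $u$), hence $w\in\mathcal{L}'(\V)$. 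The one subtlety is the normalization on $K$: the pluripolar set $K$ may meet $\sing\V$, and a priori $w$ could exceed $0$ at points of $K\cap\sing\V$ even though $u\le 0$ there. To handle this I would instead work with $w_\varepsilon:=(u-\varepsilon)^{*\reg\V}$ for small $\varepsilon>0$, note $w_\varepsilon\le w-\varepsilon$ is harmless for growth, and use that on $\reg\V$ (which is dense in $\V$ and in particular $K\cap\reg\V$ is dense in $K$ by irreducibility, since $\sing\V$ is nowhere dense) one has $w_\varepsilon=u-\varepsilon\le -\varepsilon<0$; then at any point $z\in\sing\V$, $w_\varepsilon(z)=\limsup_{\reg\V\ni \zeta\to z}(u(\zeta)-\varepsilon)$, and since $K$ is pluripolar hence has no interior in $\V$, one must argue this limsup does not force a positive value on $K$ — more carefully, one takes $w_\varepsilon:=\max\{(u-\varepsilon)^{*\reg\V},\,M\cdot(\text{growth term})\}$-type truncations only if needed, but the cleanest route is: since $K\cap\reg\V$ is dense in $K$, $w_\varepsilon\le 0$ holds on $K\cap\reg\V$, and as $w_\varepsilon\in psh(\V)$ is strongly upper semicontinuous, $w_\varepsilon=w_\varepsilon^{*\reg\V}\le 0$ on all of $\overline{K\cap\reg\V}=K$. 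Thus $w_\varepsilon\in\mathcal{L}'(\V)$ with $(w_\varepsilon)_{|_K}\le 0$, so $w_\varepsilon\le U_{K,\V}$ on $\V$, whence $u-\varepsilon=w_\varepsilon\le U_{K,\V}$ on $\reg\V$. Letting $\varepsilon\to 0$ gives $u\le U_{K,\V}$ on $\reg\V$; taking the supremum over all such $u$ and then the regularization along $\reg\V$ yields $V_{K,\V}^{\ *\reg\V}\le U_{K,\V}^{\ *\reg\V}$ on $\V$, as desired.

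The main obstacle, and the only point requiring care, is exactly the behaviour on $K\cap\sing\V$: one must make sure the passage $u\mapsto u^{*\reg\V}$ does not spoil the normalization $(\cdot)_{|_K}\le 0$. The resolution rests on two facts already available: that $\sing\V$ is nowhere dense in the irreducible set $\V$ (so $K\cap\reg\V$ is dense in $K$), and that the repaired function lies in $psh(\V)$ and is therefore strongly upper semicontinuous, i.e. equals its own $\reg\V$-regularization, which propagates the bound $\le 0$ from the dense subset $K\cap\reg\V$ to all of $K$. Everything else — the inclusion $\mathcal{L}'(\V)\subset\mathcal{L}(\V)$, the preservation of logarithmic growth, and the monotonicity of regularization — is routine.
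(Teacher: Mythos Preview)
Your overall strategy matches the paper's: for $U_{K,\V}\le V_{K,\V}$ you use $psh(\V)\subset PSH(\V)$ exactly as they do, and for the reverse inequality of the regularizations you pass from $u\in\mathcal{L}(\V)$ to $w:=u^{*\reg\V}\in\mathcal{L}'(\V)$. The difference lies entirely in how you handle the one obstacle you correctly flag, namely whether $w\le 0$ on $K$.

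Your resolution has a genuine gap. The claim $\overline{K\cap\reg\V}=K$ is false in general: $\sing\V$ being nowhere dense in $\V$ says nothing about $K\cap\sing\V$ being nowhere dense in $K$ (take $K=\{p\}$ with $p\in\sing\V$, or any $K\subset\sing\V$). So the strong-upper-semicontinuity argument cannot propagate the bound from $K\cap\reg\V$ to all of $K$, and the $\varepsilon$-shift does not help. You also make an inconsistent remark: you assert in passing that ``the regularization only lowers values on $\sing\V$'' when discussing growth, but then immediately worry that $w$ might exceed $0$ on $K\cap\sing\V$; these two statements contradict each other, and the first one --- had you justified it --- is exactly what is needed.

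The paper's fix is short and avoids density entirely. Since $u\in PSH(\V)=PSH(\CC^N)_{|_\V}$, write $u=\tilde u_{|_\V}$ with $\tilde u\in PSH(\CC^N)$. Then on all of $\V$,
\[
u^{*\reg\V}=\tilde u^{\,*\reg\V}\le \tilde u^{\,*\CC^N}=\tilde u = u,
\]
because regularizing along the smaller set $\reg\V$ gives a smaller $\limsup$. Hence $w_{|_K}\le u_{|_K}\le 0$ directly, with no $\varepsilon$ and no density argument. From there the proof finishes exactly as you indicate: $w$ is a competitor for $U_{K,\V}$, so $u=w\le U_{K,\V}$ on $\reg\V$, and taking sup over $u$ and then the $\reg\V$-regularization gives $V_{K,\V}^{\ *\reg\V}\le U_{K,\V}^{\ *\reg\V}$.
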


\begin{proof}
	Fix $z\in \reg\V$ and \ $u\in\mathcal{L}(\V)$. Since $u\in PSH(\reg\V)$, we have $u^{*\reg\V}(z)=u(z)$, \ $u^{*\reg\V}\in psh(\V)$ and $u^{*\reg\V}\in \mathcal{L}'(\V)$. \ Moreover, $\mathcal{L}(\V)\subset PSH(\V)=PSH(\CC^N)_{|_\V}$, and so $u=\tilde{u}_{|_\V}$ for some function $\tilde{u}\in PSH(\CC^N)$. Thus, we obtain \ $u^{*\reg\V}=\tilde{u}^{*\reg\V}\le \tilde{u}^{*\CC^N} = \tilde{u}$ \ on $\V$ and \ $u^{*\reg\V}_{ \ |_K}=\tilde{u}^{*\reg\V}_{ \ |_K}\le \tilde{u}_{|_K}=u_{|_K}\le 0$. \ Consequently, 
	\begin{align*}
	V_{K,\V}(z)&=\sup\{u(z)\: : \: u\in \mathcal{L}(\V), \ u_{|_K}\le 0\} = \sup\{u^{*\reg\V}(z)\: : \: u\in \mathcal{L}(\V), \ u_{|_K}\le 0\} \\
	&\le \sup\{v(z)\: : \: v\in \mathcal{L}'(\V), \ v_{|_K}\le 0\} = U_{K,\V}(z)
	\end{align*}
	and hence
	\[ V_{K,\V}^{ \ *\reg\V} \le U_{K,\V}^{ \ *\reg\V} \ \ \ \ \mbox{on} \ \ \V. \]
	
	On the other hand, by Corollary \ref{psh=PSH}, $psh(\V)\subset PSH(\V)$, hence $\mathcal{L}'(\V)\subset\mathcal{L}(\V)$ and $U_{K,\V} \le V_{K,\V}$ \ on $\V$.
\end{proof}

\begin{example} \label{nplp_on_A}
	Let \ $B(a,\rho):=\{z\in \CC^N \: : \: \|z-a\|<\rho\}$  and let $\bar{B}(a,\rho)$ denote the corresponding closed ball. Consider an irreducible algebraic set $\A\subset \CC^N$ and fix $a\in \A$. We will estimate $V_{\A\cap \bar{B}(a,\rho),\A}$ on $\A$. Assume that $a=0$ taking a translation of $\A$ if necessary.
	By Sadullaev's criterion for algebraicity (see Theorem \ref{Rudin_Sadullaev}),
	there exists a linear space $X\subset\CC^N$ and a linear complement $Y$ of $X$ to $\CC^N$ such that
	\[ \A \subset \{ z= z_x+z_y\in \CC^N\: : \: z_x\in X, \ z_y\in Y, \ \|z_x\|\le C(1+\|z_y\|)\}\] 
	for some positive constant $C$. 
	Set
	\begin{align*}
	B_r:&=\{z=z_x+z_y\in \A\: : \: z_x\in X, \ z_y\in Y, \ \|z_y\|\le r\}\\ &\subset \{ z= z_x+z_y\in \A \: : \: z_x\in X, \ z_y\in Y, \ \|z_y\|\le r, \ \|z_x\|\le C(1+r)\}. \end{align*}
	As in the proof of \cite[Thm.2.2]{Sad83}, we can show that 
	\[ V_{B_r}(z) = \log^+ \frac{\|z_y\|}{r} \quad\mbox{for }  \ z=z_x+z_y\in \A. \]
	Consider \ $r>0$ \ such that \ $B_r \subset \A\cap \bar{B}(0,\rho)$. \ 
	For any $z=z_x+z_y\in \A$, we have
	\begin{equation*}  
	\log^+ \frac{\|z\|}{\rho} =  V_{\bar{B}(0,\rho)}(z) \le V_{\A \cap \bar{B}(0,\rho)}(z) = 
	V_{\A \cap \bar{B}(0,\rho),\A}(z) \le V_{B_r}(z) = \log^+ \frac{\|z_y\|}{r} \le \log^+ \frac{\|z\|}{r}.
	\end{equation*}
	In particular, the set $\A \cap \bar{B}(0,\rho)$ is non-pluripolar on $\A$ (see Theorem \ref{lok_bound_VK}).
\end{example}

\vskip 4mm

\section{Growth exponent, order and Łojasiewicz exponent of a function}

Definitions of the notions mentioned in the title of this section can be stated for any unbounded set $\V\subset\CC^N$ and any (unbounded) function $f$ defined on $\V$. However, we are not interested in such a general case. We assume that $\V$ is an unbounded analytic set in $\CC^N$ for $N\ge 2$, and $f:\V\rightarrow \CC^M$ is a holomorphic mapping on $\V$. Recall that any bounded analytic set in $\CC^N$ is finite. 

\subsection{Growth exponent and polynomial mappings on analytic sets} \label{6.1} Under the above assumptions, the {\it growth exponent of function $f$ on $\V$} is defined by
\[ \rf :=\limsup_{\V\ni z\rightarrow \infty} \frac{\log \|f(z)\|}{\log \|z\|}.\] 
The growth exponent can be equal to zero (e.g., for $f\equiv\:$const$\ne 0$) and even to $-\infty$ (for $f\equiv0$). One can easily show that with the notation \ $\inf\: \emptyset:=+\infty$, \ we have
\[ \rf \!= \inf\{d\!\!\in\!\RR\: : \: \exists\, C\!>\!0 \ \ \forall z\!\in\! \V \ \ \|f(z)\| \le C(1+\|z||)^d\}  = \inf\!\left\{\!\!d\!\!\in\!\RR\, : \, \limsup_{\V\ni z\rightarrow \infty} \frac{\|f(z)\|}{\|z\|^d}\!<\!\infty\!\right\}\!\!.\]

We say that a function $f: \V \rightarrow \CC^{M}$ is a {\it polynomial mapping on $\V$} if there exists a function $F=(F_1,...,F_M):\CC^{N} \rightarrow \CC^{M}$ such that every its component $F_j$ is a polynomial of $N$ variables and $F_{|_V} = f$. Observe that in higher dimensions, in contrast to the one-dimensional case, a non-constant polynomial mapping is not necessarily proper.

\vskip 1mm

The image of an algebraic set under a proper polynomial mapping is an algebraic set. Indeed, being proper, the mapping is closed and so the statement is a consequence of Chevalley's Theorem (see e.g. \cite[VII.8.3]{Loj}, Proposition 2 and the unnumbered corollary that comes after the Chevalley Theorem in \cite{Loj}).

\vskip 1mm

If $f\!=\!(f_1,...,f_M)$ is a polynomial mapping on $\V\!=\!\CC^N$, then 
\[ \rf = \max\{{\rm deg}\,f_j\, :\, j =1,...,M\}=:{\rm deg}\,f.\] 
 Moreover, it is easy to show that for an unbounded algebraic set $\A\subset \CC^N$ and a polynomial mapping \ $f:\A\rightarrow \CC^M$ 
\be \label{osz_rfA} 
\rfA =\max_{j=1,...,M} \!\!\ro(f_j,\mathcal{A}) \le \max\{{\rm deg}_\A f_j \: : \: j=1,...,M\}, 
\ee
where \ ${\rm deg}_{\A} f_j:= \min\{ {\rm deg}\, F_j \: : \: F_j\in \CC[z_1,...,z_N], \ {F_j}_{|_{\A}}=f_j \}$. \ It is worth noticing that in the general case the growth exponent of a polynomial mapping on an algebraic set does not have to be an integer. 

\begin{example} Consider $\A=\{(w,z)\in \CC^2\: : \: w=z^2\}$ and $f(w,z) =(z^2,wz)$. We have \
${\rm deg}_{\A} f_1=1$, ${\rm deg}_{\A} f_2=2$ but $\rfA=3/2$.
\end{example}

\vskip 1mm

We will state the following property without a proof. This property is a slightly reformulated result of Bj\"ork \cite{Bjk}, and can also be seen as a consequence of Serre's Algebraic Graph Theorem (see e.g.\cite[VII.16.3]{Loj}). 

\begin{proposition} \label{equiv_polyn_map}
	Let $\A\subset \CC^N$ be an algebraic set of positive dimension and let  $f:\A\rightarrow \CC^M$ be a holomorphic mapping on $\A$. 
	Then
	\[ \rfA\!<\!\infty\: \ \ \  \Longleftrightarrow \ \ \ f \mbox{ is a polynomial mapping on} \ \A.\]
\end{proposition}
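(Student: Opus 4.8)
The plan is to prove Proposition~\ref{equiv_polyn_map} in both directions, with the forward implication being the substantive one.

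The easy direction is ``$\Leftarrow$'': if $f=F_{|_\A}$ for a polynomial mapping $F=(F_1,\ldots,F_M):\CC^N\to\CC^M$, then $\|f(z)\|=\|F(z)\|\le C(1+\|z\|)^{\deg F}$ for all $z\in\A$, where $\deg F=\max_j\deg F_j$, simply because this estimate already holds on all of $\CC^N$. Hence $\rfA\le\deg F<\infty$. (This is of course just the first inequality recorded in (\ref{osz_rfA}).)

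For the forward direction ``$\Rightarrow$'', the plan is to reduce to Serre's Algebraic Graph Theorem (as cited, \cite[VII.16.3]{Loj}). Consider the graph $\Gamma_f:=\{(z,f(z)):z\in\A\}\subset\CC^N\times\CC^M$. Since $f$ is holomorphic on $\A$ and $\A$ is algebraic, $\Gamma_f$ is an analytic subset of $\CC^N\times\CC^M$ (biholomorphic to $\A$ via the first projection, which is proper). The key point is to show that the hypothesis $\rfA<\infty$ forces $\Gamma_f$ to satisfy a polynomial growth condition of Sadullaev–Rudin type: there exist a linear decomposition of $\CC^N\times\CC^M$ and constants $C,c>0$ making (\ref{Sadullaev_coords}) hold for $\Gamma_f$. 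Concretely, because $\A$ itself is algebraic, by Theorem~\ref{Rudin_Sadullaev} there is a Sadullaev splitting $\CC^N=X\oplus Y$ with $\dim X=N-\dim\A$ and $\|z_x\|\le C_1(1+\|z_y\|)$ on $\A$; then on $\Gamma_f$ the ``$X$-part'' together with the $\CC^M$-factor is controlled by $\|z_x\|+\|f(z)\|\le C_1(1+\|z_y\|)+C_2(1+\|z\|)^d$ for $d>\rfA$, and since $\|z\|$ on $\A$ is in turn bounded by a power of $\|z_y\|$, the whole thing is $\le C_3(1+\|z_y\|)^{c}$ for a suitable $c$. This exhibits $\Gamma_f$ as a pure-dimensional analytic set satisfying (\ref{Sadullaev_coords}), hence algebraic by Theorem~\ref{Rudin_Sadullaev}. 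Once $\Gamma_f$ is algebraic, each coordinate function $z\mapsto f_j(z)$ on $\A$ is the restriction of a polynomial: indeed $\Gamma_f$ being an algebraic graph over $\A$ means there are polynomials realizing the $f_j$ up to the ideal of $\A$ — this is exactly the content of Serre's theorem (an analytic map between algebraic sets whose graph is algebraic is a polynomial mapping). Therefore $f$ is a polynomial mapping on $\A$.

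The main obstacle I expect is the bookkeeping in the growth estimate: one must be careful that the Sadullaev inequality for $\A$ only bounds the $X$-component of a point $z\in\A$ by its $Y$-component, so to get a genuine (\ref{Sadullaev_coords})-type bound for $\Gamma_f$ in the variable $z_y$ one needs the two-sided comparability of $\|z\|$ with a power of $\|z_y\|$ along $\A$ (the lower bound $\|z\|\ge\|z_y\|$ is trivial; the relevant upper bound $\|z\|\le C(1+\|z_y\|)$ comes from the Sadullaev splitting itself). The other delicate point is invoking Serre's theorem in precisely the right form; since the paper explicitly says this proposition ``is a slightly reformulated result of Bj\"ork'' and ``can also be seen as a consequence of Serre's Algebraic Graph Theorem'', and declares it will be stated without proof, the honest plan is to cite \cite{Bjk} and \cite[VII.16.3]{Loj} and sketch only the graph-algebraicity reduction above rather than reproving Serre's theorem.
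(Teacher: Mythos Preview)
The paper offers no proof for this proposition; it merely states that it is a reformulation of Bj\"ork's result \cite{Bjk} and a consequence of Serre's Algebraic Graph Theorem \cite[VII.16.3]{Loj}. Your proposal follows precisely this indicated route and supplies a correct sketch of the reduction: the Sadullaev splitting of $\A$ from Theorem~\ref{Rudin_Sadullaev} together with the growth bound $\rfA<\infty$ yields condition (\ref{Sadullaev_coords}) for $\Gamma_f$ (with the splitting $(X\times\CC^M)\oplus(Y\times\{0\})$ of $\CC^{N+M}$), whence Theorem~\ref{Rudin_Sadullaev} makes $\Gamma_f$ algebraic, and Serre's theorem then converts algebraicity of the graph into polynomiality of $f$. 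The only minor caveat is that Theorem~\ref{Rudin_Sadullaev} is stated for pure-dimensional sets, so for a general algebraic $\A$ one should first pass to irreducible components; this is routine and does not affect the argument.
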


\subsection{Bernstein-Walsh inequality on algebraic sets}
The Siciak extremal function on algebraic sets can be expressed in an alternative way using growth exponents.
If $\mathcal{A}\subset\mathbb{C}^N$ is an algebraic set, then by $\mathcal{O}(\mathcal{A})$ we will denote the family of all holomorphic functions on $\mathcal{A}$ with values in $\CC$. We define the \emph{Siciak extremal function} of a compact set $K\subset\mathcal{A}$ by the formula
\begin{equation}\label{SEF}
\Phi_{K,\mathcal{A}}(z)=\sup\{|p(z)|^{1/\ro(p,\mathcal{A})}\,:\,
p\in\mathcal{O}(\mathcal{A}),\ \Vert p\Vert_K\leq1,\text{ and }0<\ro(p,\mathcal{A})<\infty\},
\end{equation}
for all $z\in\mathcal{A}$.
In view of Proposition \ref{equiv_polyn_map} we can use the family $\mathbb{C}[z]$ of all polynomials of $N$ variables in place of $\mathcal{O}(\mathcal{A})$ in (\ref{SEF}), but as stated, the definition emphasizes that the concept can be seen as intrinsic to $\mathcal{A}$. 

Clearly, if  $\mathcal{A}=\mathbb{C}^N$, then  $\Phi_{K,\mathbb{C}^N}\equiv\Phi_K$. Note also that 
\[
\Phi_{K,\mathcal{A}}\equiv\Phi_{\hat{K},\mathcal{A}},
\]
where $\hat{K}$ denotes the polynomially convex hull of $K$. Consequently, $\Phi_{K,\mathcal{A}}>1$ outside of $\hat{K}$.

\begin{proposition}\label{compare_Green_fns}
If $\mathcal{A}\subset\CC$ is a pure dimensional algebraic set of positive dimension and $K\subset\mathcal{A}$ is compact, then
\begin{equation}\label{compare_Green_fns_formula}
V_K(z)=\log\Phi_K(z)=\log\Phi_{K,\mathcal{A}}(z),\qquad z\in\mathcal{A}.
\end{equation}
\end{proposition}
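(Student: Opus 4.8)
The first equality, $V_K=\log\Phi_K$ on $\mathcal{A}$, is immediate: it is the Siciak--Zaharjuta formula (\ref{Siciak-Za}), which holds for every compact $K\subset\CC^N$ and every point of $\CC^N$, with both sides permitted to be $+\infty$ (as they are at many points of $\mathcal{A}$ when $K$ is pluripolar on $\mathcal{A}$). So the plan is to establish the second equality, namely $\Phi_{K,\mathcal{A}}=\Phi_K$ on $\mathcal{A}$. Here I would exploit Theorem~\ref{VKV=VK}, which already gives $\Phi_K=\exp V_{K,\mathcal{A}}$ on $\mathcal{A}$; thus it suffices to prove the two-sided estimate $\log\Phi_{K,\mathcal{A}}=V_{K,\mathcal{A}}$ on $\mathcal{A}$. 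I would also use Proposition~\ref{equiv_polyn_map} to replace, in (\ref{SEF}), the class $\mathcal{O}(\mathcal{A})$ by restrictions to $\mathcal{A}$ of ordinary polynomials $P\in\CC[z]$, together with the elementary bound $\ro(P|_{\mathcal{A}},\mathcal{A})\le\deg P$.

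The inequality $\log\Phi_{K,\mathcal{A}}\le V_{K,\mathcal{A}}$ on $\mathcal{A}$ would be proved by the familiar Siciak-type device. Given a polynomial $P$ with $\Vert P\Vert_K\le1$ and $0<\rho_0:=\ro(P|_{\mathcal{A}},\mathcal{A})<\infty$, and given $\varepsilon>0$, $R>0$, the function $u:=\max\{(\rho_0+\varepsilon)^{-1}\log|P|,\,-R\}$ is plurisubharmonic on $\CC^N$, is $\le0$ on $K$, and its restriction to $\mathcal{A}$ has growth exponent $\rho_0/(\rho_0+\varepsilon)<1$, so $u|_{\mathcal{A}}\in\mathcal{L}(\mathcal{A})$; hence $u\le V_{K,\mathcal{A}}$ on $\mathcal{A}$. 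Letting $R\to\infty$ and then $\varepsilon\to0$ yields $|P(z)|^{1/\rho_0}\le\exp V_{K,\mathcal{A}}(z)$ on $\mathcal{A}$, and taking the supremum over such $P$ gives $\Phi_{K,\mathcal{A}}\le\exp V_{K,\mathcal{A}}=\Phi_K$ on $\mathcal{A}$.

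The reverse inequality $\Phi_K\le\Phi_{K,\mathcal{A}}$ on $\mathcal{A}$ is where the real work sits, and I expect it to be the main obstacle. The difficulty is that a polynomial $P$ competing in the supremum defining $\Phi_K$ need not be admissible in (\ref{SEF}) — its growth exponent on $\mathcal{A}$ may be $\le0$ (for instance if $P$ is bounded on $\mathcal{A}$) or strictly smaller than $\deg P$ — so one must recover the full supremum $\Phi_K(z_0)$ using only admissible competitors. My plan is a multiplicative perturbation. Fix $z_0\in\mathcal{A}$; as $\mathcal{A}$ has finitely many irreducible components, all of positive dimension, choose a suitable affine-linear form $\ell$ with $\ell(z_0)=0$, $\Vert\ell\Vert_K\le1$, which is nonconstant (hence unbounded) on every component of $\mathcal{A}$. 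First one checks $\Phi_{K,\mathcal{A}}(z_0)\ge1$ by a Blaschke-type family: $q_n:=(n+1)^{-1}(n+\ell^n)$ satisfies $\Vert q_n\Vert_K\le1$, $|q_n(z_0)|=n/(n+1)$, and $\ro(q_n|_{\mathcal{A}},\mathcal{A})=n\,\ro(\ell|_{\mathcal{A}},\mathcal{A})\in(0,\infty)$, whence $\Phi_{K,\mathcal{A}}(z_0)\ge\bigl(n/(n+1)\bigr)^{1/(n\,\ro(\ell|_{\mathcal{A}},\mathcal{A}))}\to1$. Then, for a polynomial $P$ with $d:=\deg P\ge1$ and $\Vert P\Vert_K\le1$: if $|P(z_0)|\le1$ we are done by the previous line, and otherwise put $Q_m:=\tfrac12 P^m(1+\ell)$. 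One verifies $\Vert Q_m\Vert_K\le1$, $Q_m(z_0)=\tfrac12 P(z_0)^m$, and $\ro(Q_m|_{\mathcal{A}},\mathcal{A})\le\deg Q_m=md+1$; moreover $\ro(Q_m|_{\mathcal{A}},\mathcal{A})>0$ for all large $m$, because $Q_m$ restricted to the component through $z_0$ (on which $P$ does not vanish identically, since $P(z_0)\neq0$) is then a nonconstant regular function, and a nonconstant regular function on an unbounded irreducible algebraic set is unbounded. Thus $Q_m$ is admissible in (\ref{SEF}), and since $\tfrac12|P(z_0)|^m\ge1$ for large $m$,
\[
\Phi_{K,\mathcal{A}}(z_0)\ \ge\ \bigl(\tfrac12|P(z_0)|^m\bigr)^{1/\ro(Q_m|_{\mathcal{A}},\mathcal{A})}\ \ge\ 2^{-1/(md+1)}\,|P(z_0)|^{m/(md+1)};
\]
the right-hand side converges to $|P(z_0)|^{1/d}$ as $m\to\infty$, so $\Phi_{K,\mathcal{A}}(z_0)\ge|P(z_0)|^{1/d}$, and the supremum over $P$ gives $\Phi_K(z_0)\le\Phi_{K,\mathcal{A}}(z_0)$.

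The hard part is therefore this last inequality, and within it the two supporting facts about algebraic sets — the Liouville-type statement that a bounded regular function on an unbounded irreducible algebraic set is constant, and the existence of the auxiliary form $\ell$ with the properties listed — which, though standard, need care. Everything else is routine once Theorem~\ref{VKV=VK} is available.
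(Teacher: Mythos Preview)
Your proof is correct, and the overall sandwich strategy
\[
V_K=\log\Phi_K\ \le\ \log\Phi_{K,\mathcal{A}}\ \le\ V_{K,\mathcal{A}}\ =\ V_K
\]
is exactly what the paper uses. The difference is in how the two inequalities are obtained, and you have swapped which one is ``hard''.

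For $\log\Phi_{K,\mathcal{A}}\le V_{K,\mathcal{A}}$ the paper invokes the Strzebo\'nski result that for $p\in\CC[z]$ the infimum defining $\ro(p,\A)$ is attained, so $\frac{1}{\ro(p,\A)}\log^+|p|$ restricted to $\A$ is itself a competitor in $V_{K,\A}$; this makes the inequality a one-liner. Your $(\rho_0+\varepsilon)$-truncation device avoids Strzebo\'nski and is perfectly fine, just slightly longer.

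For $\log\Phi_K\le\log\Phi_{K,\mathcal{A}}$ you call this ``where the real work sits'' and build the multiplicative perturbations $q_n$ and $Q_m$. The paper dispatches it termwise: since $0<\ro(p,\A)\le\deg p$ one has $\frac{1}{\deg p}\log^+|p|\le\frac{1}{\ro(p,\A)}\log^+|p|$, and the same polynomial is admissible in the second supremum. Your worry about competitors with $\ro(P|_\A,\A)\le0$ is legitimate but is resolved much more cheaply than by perturbation: if $\ro(P|_\A,\A)=0$ then (by Strzebo\'nski, or directly) $P$ is bounded on $\A$, hence by the Liouville property for irreducible algebraic sets (stated as a corollary in Section~6.3) $P$ is constant on every irreducible component of $\A$; on any component meeting $K$ this constant has modulus $\le\Vert P\Vert_K\le1$, so $\log^+|P(z)|=0$ and such $P$ contribute nothing to $\log\Phi_K(z)$. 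Thus the termwise comparison already suffices, and your perturbation machinery with $\ell$, $q_n$, $Q_m$ --- while correct, including the claim that $Q_m$ is nonconstant on the component through $z_0$ for all large $m$ --- is unnecessary.

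In short: your argument is valid, but you misidentified the obstacle. What you treated as routine (the upper bound via the Lelong class) the paper makes immediate using Strzebo\'nski, and what you treated as the crux (the lower bound) collapses to a termwise inequality once the $\ro=0$ case is disposed of via Liouville.
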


\begin{proof} Note that from \cite{Strz} we have
\[ \ro\!(p,\A) = \min\{d\ge0\: : \: \exists\, C\!>\!0 \ \ \forall z\!\in\! \A \ \ \|p(z)\| \le C(1+\|z||)^d\} \]
for $p\in\CC[z]$.
The first equality in (\ref{compare_Green_fns_formula}) is the statement of the Siciak-Zaharjuta formula (see (\ref{Siciak-Za})). When combined with (\ref{osz_rfA}) and Theorem \ref{VKV=VK}, it implies the following:
\begin{eqnarray*}
V_K(z)=\log\Phi_K(z)&=&\sup\left\{\frac{1}{\deg p}\log^+|p(z)|\,:\,
p\in\mathbb{C}[z],\ \Vert p\Vert_K\leq1,\text{ and }\deg p>0\right\}\\
&\leq&\sup\left\{\frac{1}{\ro\!(p,\mathcal{A})}\log^+|p(z)|\,:\,
p\in\mathbb{C}[z],\ \Vert p\Vert_K\leq1,\text{ and } \ro\!(p,\mathcal{A})>0\right\}\\
&=&\log \Phi_{K,\mathcal{A}}(z)\leq V_{K,\mathcal{A}}(z)=V_K(z),
\end{eqnarray*}
for all $z\in\mathcal{A}$.
\end{proof} 

As a direct consequence we get the following inequality:

\begin{corol}\label{BWE}\textbf{(Bernstein-Walsh Inequality)} If $K$ is a compact subset of a pure dimensional algebraic set $\mathcal{A}$ of positive dimension
and $p\in\mathcal{O}(\mathcal{A})$, then
\begin{equation}\label{BWE_formula}
|p(z)|\leq\Vert p\Vert_K\exp\!\Big[\ro\!(p,\mathcal{V})\:V_K(z)\Big],\qquad z\in\mathcal{A}.
\end{equation}
\end{corol}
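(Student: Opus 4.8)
The plan is to derive the Bernstein-Walsh inequality directly from Proposition~\ref{compare_Green_fns}, which identifies $\log\Phi_{K,\mathcal{A}}$ with $V_K$ on $\mathcal{A}$. First I would reduce to the case where $p$ is non-constant with finite positive growth exponent: if $p$ is constant, (\ref{BWE_formula}) is immediate since $\ro(p,\mathcal{A})\le 0$ and the right-hand side is at least $\Vert p\Vert_K\ge|p(z)|$ (interpreting $\exp[\ro(p,\mathcal{A})V_K(z)]$ appropriately, or simply noting $|p|$ is constant and bounded by $\Vert p\Vert_K$); and by Proposition~\ref{equiv_polyn_map} every $p\in\mathcal{O}(\mathcal{A})$ has $\ro(p,\mathcal{A})<\infty$ anyway, so only $\ro(p,\mathcal{A})=0$ for non-constant $p$ would need a separate remark, but on an unbounded algebraic set a non-constant polynomial restriction has positive growth exponent.

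The core step is then a one-line unwinding of the definition (\ref{SEF}). If $\Vert p\Vert_K\le1$ and $0<\ro(p,\mathcal{A})<\infty$, then $p$ is one of the competitors in the supremum defining $\Phi_{K,\mathcal{A}}(z)$, so $|p(z)|^{1/\ro(p,\mathcal{A})}\le\Phi_{K,\mathcal{A}}(z)$, i.e. $|p(z)|\le\Phi_{K,\mathcal{A}}(z)^{\ro(p,\mathcal{A})}=\exp[\ro(p,\mathcal{A})\log\Phi_{K,\mathcal{A}}(z)]=\exp[\ro(p,\mathcal{A})V_K(z)]$ by Proposition~\ref{compare_Green_fns}. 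To remove the normalization $\Vert p\Vert_K\le1$, I would apply this to $p/\Vert p\Vert_K$ (assuming $\Vert p\Vert_K>0$; the case $\Vert p\Vert_K=0$ forces $p\equiv0$ on $\mathcal{A}$ by the identity principle since $\mathcal{A}$ is irreducible on each component and $K$ non-polar there, or is trivial), using that $\ro(p/\Vert p\Vert_K,\mathcal{A})=\ro(p,\mathcal{A})$ since rescaling by a constant does not change the growth exponent. This yields $|p(z)|/\Vert p\Vert_K\le\exp[\ro(p,\mathcal{A})V_K(z)]$, which is (\ref{BWE_formula}) after noting the typo that $\mathcal{V}$ there should read $\mathcal{A}$.

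I would then note that the statement as written has $\ro(p,\mathcal{V})$ in the exponent while the hypotheses only mention $\mathcal{A}$; I would simply write $\ro(p,\mathcal{A})$ throughout, matching Proposition~\ref{compare_Green_fns}. The only genuinely delicate point — and the main obstacle, though a minor one — is handling the degenerate cases ($p$ constant, $\Vert p\Vert_K=0$, or $\ro(p,\mathcal{A})=0$) so that the inequality is literally true for every $p\in\mathcal{O}(\mathcal{A})$ as claimed; everything else is a direct substitution into the already-proved equality $\log\Phi_{K,\mathcal{A}}=V_K$. In practice the proof is two or three lines, and I would present it as: normalize, invoke the definition of $\Phi_{K,\mathcal{A}}$, invoke Proposition~\ref{compare_Green_fns}.
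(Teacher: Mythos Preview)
Your proposal is correct and matches the paper exactly: the paper states the corollary as ``a direct consequence'' of Proposition~\ref{compare_Green_fns} with no further proof, and your normalize-then-substitute argument is precisely what that phrase encodes. One minor slip worth flagging is that Proposition~\ref{equiv_polyn_map} does \emph{not} say every $p\in\mathcal{O}(\mathcal{A})$ has finite growth exponent (only the polynomial restrictions do), but this is harmless since $\ro(p,\mathcal{A})=\infty$ makes the right-hand side of (\ref{BWE_formula}) infinite wherever $V_K>0$; your identification of the $\mathcal{V}/\mathcal{A}$ typo is also correct.
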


Note that $\ro\!(p,\mathcal{A})$ may be smaller than the degree of the polynomial extension of $p$ to the underlying vector space.
\begin{example}
Assume that $\mathcal{A}=\{(x,y)\in\mathbb{C}^2\,:\,x^2+y^2=1\}$ and $p(x,y)=x^3+x^2y+xy^2+y^3$. Then 
$1=\,\ro\!(p,\mathcal{A})<\,\ro\!(p,\mathbb{C}^2)=\deg p=3$.
\end{example}
In such cases the above version of the Bernstein-Walsh inequality is stronger than the classical one.

\subsection{Liouville's theorem on analytic sets}   
In what follows we often need to assume that the image of an analytic set under a holomorphic mapping is unbounded. We give a brief discussion of this assumption.

Let $\V$ be an analytic set in $\CC^{N}$ and   $f:\V\rightarrow \CC^M$ \  be a holomorphic mapping on $\V$. We are interested in choices of $\V$ and $f$ for which $f(\V)$ is not bounded. An elegant sufficient condition is related to a version of the Liouville theorem on $\V$. 

\vskip 1mm

We will say that an analytic set $\V\subset \CC^N$ has the {\it Liouville property} if any bounded function, which is holomorphic on $\V$, is constant. Obviously, the conectedness of $\V$ is a necessary 
condition for the Liouville property to be satisfied.

\vskip 1mm

Białożyt, Denkowski, Tworzewski and Wawak have obtained the following result in \cite{BDTW} (c.f. \cite{Cynk}).

\begin{thm}
An irreducible analytic set $\V\subset\CC^N$ of dimension $\ell$ has the Liouville property if the natural projection \ $\V\rightarrow Y$ \ into an $\ell$-dimensional vector space $Y\subset \CC^N$ is proper.  	
\end{thm}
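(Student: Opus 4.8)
The plan is to reduce the statement to the classical Liouville theorem on $\CC^\ell$ by transporting a bounded holomorphic function along the proper projection. Let $\pi:\V\to Y$ be the natural projection, which we assume to be proper, and fix an identification $Y\cong\CC^\ell$. Since $\V$ is irreducible of dimension $\ell=\dim Y$ and $Y$ is connected, Corollary \ref{wymiar_obrazu} (via the Open Mapping Theorem \ref{open-map-thm}, using that $Y$ is locally irreducible and that $\pi$, being proper and holomorphic, has finite fibres) tells us that $\pi$ is a proper, open, surjective, finite holomorphic mapping. Thus $\pi$ realizes $\V$ as a branched cover of $\CC^\ell$ of some finite sheet number $n$.

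Now suppose $f\in\mathcal{O}(\V)$ is bounded. The idea is to push $f$ forward to $\CC^\ell$ by forming the elementary symmetric functions of its values on the fibres: for $w\in Y$ set
\[
\sigma_k(w):=\sum_{\substack{S\subset\pi^{-1}(w)\\ \#S=k}}\ \prod_{z\in S}f(z),\qquad k=1,\dots,n,
\]
where the fibre is counted with multiplicity. Each $\sigma_k$ is bounded on $Y$ since $f$ is bounded and the fibre cardinality is uniformly $n$. Over the complement of the (analytic, hence thin) branch locus $B\subset Y$, the map $\pi$ is an $n$-sheeted covering, so the $\sigma_k$ are holomorphic there by the local-section description of $\pi$; being bounded, they extend holomorphically across $B$ by the Riemann removable singularity theorem, giving $\sigma_k\in\mathcal{O}(\CC^\ell)$ bounded. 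By the classical Liouville theorem on $\CC^\ell$, each $\sigma_k$ is constant. Hence, for every $w$, the values $\{f(z):z\in\pi^{-1}(w)\}$ are the roots of a fixed monic polynomial $T^n-\sigma_1 T^{n-1}+\dots+(-1)^n\sigma_n$, so $f$ takes its values in a fixed finite set. Since $f(\reg\V)$ is connected ($\reg\V$ is connected because $\V$ is irreducible) and contained in a finite set, $f$ is constant on $\reg\V$, and therefore on $\V$ by the continuity of $f$ together with density of $\reg\V$ in $\V$.

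The main obstacle is the careful handling of the branch locus and multiplicities: one must verify that $\sigma_k$, defined fibrewise, is genuinely holomorphic off the branch locus (which follows from local triviality of the unramified covering and the fact that elementary symmetric functions of holomorphic functions are holomorphic), that the branch locus is a proper analytic subset of $\CC^\ell$ and hence removable, and that the fibre count with multiplicity is constant equal to the generic sheet number $n$ (a standard consequence of properness plus the Open Mapping Theorem, e.g.\ \cite[V.7]{Loj}). Everything else is routine: boundedness of $\sigma_k$ is immediate, the classical Liouville theorem is invoked verbatim, and the final descent from ``values in a finite set'' to ``constant'' uses only connectedness of $\reg\V$.
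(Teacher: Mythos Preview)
The paper does not supply its own proof of this theorem; it is quoted from Białożyt, Denkowski, Tworzewski and Wawak \cite{BDTW} and only the statement is given. So there is nothing in the paper to compare your argument against.

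That said, your argument is correct and is the classical one. A couple of small points you could tighten. First, the definition of the $\sigma_k$ ``with multiplicity'' over the branch locus is a little informal; the cleanest route is exactly what you do next anyway: define $\sigma_k$ only on the unramified locus $Y\setminus B$ (where $\pi$ is an $n$-sheeted covering and the $\sigma_k$ are manifestly holomorphic), note that they are bounded there because $|f|$ is bounded, and then extend across the proper analytic set $B$ by the Riemann removable singularity theorem. You never actually need the fibre-with-multiplicity description over $B$. Second, the fact that the branch locus $B$ is a proper analytic subset of $Y$ follows from the description of the ramification set in the source (as you indicate via \cite[V.7]{Loj}) together with Remmert's Proper Mapping Theorem (Theorem \ref{proper-map-thm}); this deserves a sentence rather than a parenthetical. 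Finally, the closing step is fine: $f(\V)$ lands in a finite set, $\reg\V$ is connected because $\V$ is irreducible, so $f$ is constant on $\reg\V$ and hence on $\V$ by continuity.
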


As an immediate consequence of Theorem \ref{Rudin_Sadullaev}, we obtain
\begin{corol}
	Any irreducible algebraic set has the Liouville property. 
\end{corol}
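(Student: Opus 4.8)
The statement to prove is the corollary: any irreducible algebraic set has the Liouville property.

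The plan is to deduce this directly from the preceding theorem (the Białożyt–Denkowski–Tworzewski–Wawak result) combined with the Rudin–Sadullaev theorem (Theorem \ref{Rudin_Sadullaev}). The previous theorem says: an irreducible analytic set $\V\subset\CC^N$ of dimension $\ell$ has the Liouville property provided there is an $\ell$-dimensional vector subspace $Y\subset\CC^N$ such that the natural projection $\V\to Y$ is proper. So the entire task reduces to producing such a $Y$ for an arbitrary irreducible algebraic set.

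First I would let $\A\subset\CC^N$ be an irreducible algebraic set. If $\A$ is a single point the statement is trivial, so assume $\dim\A=\ell>0$; being irreducible, $\A$ is pure-dimensional of dimension $\ell$. By the Rudin–Sadullaev theorem (Theorem \ref{Rudin_Sadullaev}), since $\A$ is algebraic and pure-dimensional, there exists an $(N-\ell)$-dimensional Sadullaev subspace $X$ for $\A$; that is, there is a linear complement $Y$ of $X$ in $\CC^N$ (so $\dim Y=\ell$) and a constant $C>0$ with
\[
\A\subset\{x+y\in\CC^N\,:\,x\in X,\ y\in Y,\ \|x\|\le C(1+\|y\|)\}.
\]
As recorded in the discussion following the definition of Sadullaev subspace (and in \cite[VII.7.1]{Loj}), this growth bound forces the natural projection $\pi:\A\ni x+y\mapsto y\in Y$ to be proper; moreover, since $\dim X=N-\dim\A$, the projection is surjective onto $Y$.

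Then I would simply invoke the preceding theorem: $\A$ is irreducible of dimension $\ell$, and the natural projection $\A\to Y$ onto the $\ell$-dimensional subspace $Y$ is proper, so $\A$ has the Liouville property, i.e. every bounded holomorphic function on $\A$ is constant. There is essentially no obstacle here—the corollary is a bookkeeping consequence of two cited results. The only point requiring a word of care is the degenerate case $\dim\A=0$: an irreducible zero-dimensional algebraic set is a single point (an irreducible algebraic set being connected, and a compact analytic set being finite), on which every function is trivially constant, so the Liouville property holds vacuously and the reduction above is only needed when $\ell\ge1$.
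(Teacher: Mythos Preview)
Your proof is correct and follows exactly the route the paper intends: the paper simply states that the corollary is ``an immediate consequence of Theorem \ref{Rudin_Sadullaev}'', and you have spelled out precisely those details --- using Rudin--Sadullaev to produce an $(N-\ell)$-dimensional Sadullaev subspace, hence a proper projection onto an $\ell$-dimensional complement, and then invoking the preceding Bia\l o\.zyt--Denkowski--Tworzewski--Wawak theorem.
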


\subsection{Order of functions defined on analytic sets} Let $\V$ be an analytic set in $\CC^{N}$ of positive dimension, and   let $f:\V\rightarrow \CC^M$  be a holomorphic mapping on $\V$ such that $f(\V)$ is an unbounded set. Taking into account the results given in the previous subsection, it is sufficient to assume that $f$ is not constant and $\V$ has the Liouville property. In particular, we can take any algebraic set $\V$ and a function $f$ that is not constant on some irreducible component of $\V$. 

\vskip 1mm

The {\it order of function} $f$ {\it on $\V$}, denoted by $\rfo$, \ is the growth exponent of multifunction $f^{-1}$ on $f(\V)$, i.e.,  
\[ \rfo :=\limsup_{f(\V)\ni w\rightarrow \infty} \frac{\log \|f^{-1}(w)\|}{\log \|w\|},\]
where $\|f^{-1}(w)\|:=\sup\{\|z\|\: : \: z\in f^{-1}(w)\}$. 

\vskip 2mm

Some basic properties of $\rfo$ are listed below. 

\begin{proposition} \label{wlasnosci_rfV}
	Under the above assumptions, the order of function $f$ has the following properties:
	\begin{enumerate}
		\item[(a)] $\rfo\ge 0$, 
		\vskip 4mm
		\item[(b)] \ if $\rfo<\infty$, then $f$ is a proper mapping,
		\vskip 4mm
		\item[(c)] \ $\frac1{\ro (f,\V)} \le \rfo$,
		\vskip 2mm
		\item[(d)] $\rfo = \inf\{d>0\: : \: \limsup\limits_{f(\V)\ni w\rightarrow \infty} \frac{\|f^{-1}(w)\|}{\|w\|^d}<\infty\}=\limsup\limits_{f(z)\rightarrow \infty, \: z\in\V} \frac{\log \|z\|}{\log \|f(z)\|}$.
	\end{enumerate}
\end{proposition}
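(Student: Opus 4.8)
The plan is to establish the four items essentially in the order (d), (a), (c), (b), since the alternative formulas in (d) are the natural workhorses for the other three. First I would prove the first equality in (d): starting from the definition of $\rfo$ as the growth exponent of the multifunction $f^{-1}$ on $f(\V)$, I would invoke the characterization of the growth exponent already recorded in Subsection \ref{6.1}, namely that for any function $g$ on an unbounded set, $\ro(g,\cdot)=\inf\{d:\limsup\|g\|/\|\cdot\|^d<\infty\}$. Applying this with $g=f^{-1}$ (where $\|f^{-1}(w)\|=\sup\{\|z\|:z\in f^{-1}(w)\}$) gives the first equality; the only mild point to check is that, since $\rfo\ge0$ (item (a)), the infimum over $d\in\RR$ may be replaced by an infimum over $d>0$.

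For the second equality in (d), I would argue that substituting $w=f(z)$ and letting $f(z)\to\infty$ along $\V$ is equivalent to letting $w\to\infty$ along $f(\V)$, because $f$ is surjective onto the analytic set $f(\V)$ and $f(\V)$ is unbounded by hypothesis. The subtlety is that the fibre $f^{-1}(w)$ may contain several points $z$, so $\|f^{-1}(w)\|$ is the largest of the corresponding $\|z\|$; one direction ($\limsup_{f(z)\to\infty}\log\|z\|/\log\|f(z)\|\le\rfo$) is immediate from $\|z\|\le\|f^{-1}(f(z))\|$, and the reverse direction follows by, for each $w$, choosing $z_w\in f^{-1}(w)$ with $\|z_w\|=\|f^{-1}(w)\|$ (possible since fibres are compact, indeed finite, $f$ being a proper holomorphic map when the relevant limsups are finite — and trivially when $\rfo=\infty$). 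I would phrase this carefully so as not to circularly invoke properness before item (b); in fact here I only need that $f^{-1}(w)$ is nonempty and that the supremum defining $\|f^{-1}(w)\|$ is attained on each fibre, which holds because $f(\V)=f(\V)$ by definition so fibres over $f(\V)$ are nonempty, and each fibre is closed, hence the sup over a possibly unbounded fibre — but fibres are analytic subsets; for the attainment I would restrict attention to the case the limsup is finite where fibres turn out bounded, handling $\rfo=+\infty$ separately and trivially.

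Next, item (a): since $f(\V)$ is unbounded, for $w\in f(\V)$ with $\|w\|$ large we may pick $z\in f^{-1}(w)$, and then $\|z\|\ge\|f(z)\|^{1/\ro(f,\V)}$ up to constants when $\ro(f,\V)<\infty$ — more directly, $\|f^{-1}(w)\|\ge\|z\|$ for any such $z$, and choosing $w=f(z)$ with $\|z\|\to\infty$ shows the defining limsup is $\ge$ the limsup of $\log\|z\|/\log\|f(z)\|$, which is nonnegative because... actually the cleanest route is: from the second formula in (d), $\rfo=\limsup_{f(z)\to\infty}\log\|z\|/\log\|f(z)\|$, and for this ratio to be taken along a sequence with $f(z)\to\infty$ we need $\|z\|\to\infty$ too (a bounded set of $z$'s has bounded image), so $\log\|z\|\to\infty$ and $\log\|f(z)\|\to\infty$ are both positive eventually, forcing the limsup $\ge0$. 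For item (c): the growth exponent $\ro(f,\V)=\limsup_{z\to\infty}\log\|f(z)\|/\log\|z\|$ and $\rfo=\limsup_{f(z)\to\infty}\log\|z\|/\log\|f(z)\|$ are reciprocal-type limsups of the same family of ratios but over possibly different index sets; I would show $\rfo\ge1/\ro(f,\V)$ by taking a sequence $z_n\to\infty$ in $\V$ realizing $\ro(f,\V)$, noting $f(z_n)\to\infty$ (if $\ro(f,\V)>0$; the case $\ro(f,\V)\le0$ making the inequality trivial since the right side is $+\infty$ or the left side $\ge0$), and using $\liminf\log\|f(z_n)\|/\log\|z_n\|\le\ro(f,\V)$ hmm — more carefully, along that sequence $\log\|f(z_n)\|/\log\|z_n\|\to\ro(f,\V)$, hence $\log\|z_n\|/\log\|f(z_n)\|\to1/\ro(f,\V)$, and this sequence has $f(z_n)\to\infty$, so it contributes to the limsup defining $\rfo$ via the second formula of (d), giving $\rfo\ge1/\ro(f,\V)$.

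Finally item (b): assume $\rfo<\infty$; then from the first formula of (d) there exist $d>0$ and $C>0$ with $\|f^{-1}(w)\|\le C(1+\|w\|)^d$ for all $w\in f(\V)$; hence $f^{-1}(B)$ is bounded for every bounded $B\subset\CC^M$, and since $f$ is continuous $f^{-1}(\overline B)$ is also closed in $\V$, hence $f^{-1}(K)$ is compact for every compact $K\subset f(\V)$ — and for compact $K\subset\CC^M$ not contained in $f(\V)$ we replace $K$ by $K\cap f(\V)$, which is compact since $f(\V)$ is closed (being analytic, by the discussion of proper/closed maps; or simply: $f^{-1}(K)=f^{-1}(K\cap f(\V))$). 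Thus $f$ is proper. The main obstacle I anticipate is the bookkeeping in (d) around multivalued fibres and the attainment of the supremum $\|f^{-1}(w)\|$ on each fibre together with the need to avoid a circular appeal to properness; once (d) is cleanly in place, (a), (b), (c) are short. I would present (d) first, then derive the rest, flagging explicitly in each of (a)–(c) whether the easy direction suffices.
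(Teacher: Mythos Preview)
Your plan for (d), (a), and (c) is sound, and reordering so that the second equality in (d) comes first (then (a), then the first equality in (d)) cleanly avoids the circularity you flagged. The approach to (c) via a sequence realizing $\ro(f,\V)$ also works and is a reasonable alternative to the paper's argument, which instead invokes (b) to get that $z\to\infty$ forces $f(z)\to\infty$ and then runs a chain of $\liminf/\limsup$ inequalities.

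The genuine gap is in your argument for (b). You write that ``from the first formula of (d) there exist $d>0$ and $C>0$ with $\|f^{-1}(w)\|\le C(1+\|w\|)^d$ for \emph{all} $w\in f(\V)$.'' But the first formula in (d) is the $\limsup$ characterization, so it only yields such a bound for $\|w\|$ large, say $\|w\|\ge r$. The global characterization $\inf\{d:\exists C\ \forall z\ \|f(z)\|\le C(1+\|z\|)^d\}$ recorded in Subsection~\ref{6.1} relies on $f$ being locally bounded (trivially true for a continuous single-valued map on the closed set $\V$), but for the multifunction $f^{-1}$ this is exactly what is in question: you have no a priori reason why a fibre $f^{-1}(w_0)$ with $\|w_0\|<r$ should be bounded. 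Nothing you have proved so far rules out an unbounded sequence $z_n\in\V$ with $f(z_n)=w_0$ fixed.

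The paper closes this gap with an additional idea: the maximum principle for plurisubharmonic functions on analytic sets. For $R\ge r$ with $K\subset\{\|w\|\le R\}$, the bound at infinity gives $f^{-1}(\{\|w\|=R\})\subset\{\|z\|\le R^\rho\}$, and then the maximum principle applied to $\log\|z\|$ on the analytic set $\V$ over the region $\{\|f\|\le R\}$ yields
\[
\sup_{z\in f^{-1}(\{\|w\|\le R\})}\|z\|=\sup_{z\in f^{-1}(\{\|w\|=R\})}\|z\|\le R^\rho,
\]
which is precisely the missing control over fibres above small $w$. Without this step (or an equivalent device), your proof of (b) does not go through.
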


\begin{proof} To prove {\it (a)} suppose otherwise. Then, for some $\varepsilon>0$ and for all $w\in f(\V)$ whose norm is sufficiently large, we would have $\|f^{-1}(w)\|< \|w\|^{-\varepsilon}$. Therefore, for $w\in f(\V)$ tending to infinity we could choose $z\in \V$ such that $f(z)=w$ and $\|z\|\le \|f(z)\|^{-\varepsilon}$. It would follow that $\V\ni z\rightarrow 0\in \V$ ($\V$ is closed) and $f(z)\rightarrow \infty$, which is a contradiction. 
	
	If $\rfo\!<\!\infty$, then for $\ro\:=1+\!\!\!\rfo>0$, and some $r\!\ge\! 1$ we have 
	 $\|f^{-1}(w)\| \le \|w\|^\ro $ \
	whenever  \ $w\in\! f(\V)$, $\|w\|\ge r$. In particular, \ $\|z\|\le \|f(z)\|^\ro$, provided that $z\in\! \V$ and $\|f(z)\|\ge r$. For $R\ge r$ such that a fixed compact set $K\subset f(\V)$ is contained in $\{\|w\|\le R\}$, we have 
	\[ f^{-1}(\{\|w\|=R\}) \subset \{\|z\|\le R^\ro\}\cap \V.\]
	 By maximum principle for plurisubharmonic functions on analytic sets, see e.g. \cite[Thm.1.3]{Sad83}, we get
	\[ \|f^{-1}(K)\|=\sup_{z\in f^{-1}(K)} \|z\| \le \sup_{z\in f^{-1}(\{\|w\|\le R\})} \|z\| = \sup_{z\in f^{-1}(\{\|w\|=R\})} \|z\| \le R^\ro, \] 
	which yields the desired conclusion in {\it (b)}. 

Observe that inequality {\it (c)} holds for $\rfo=+\infty$. Thus we can assume that \linebreak $\rfo<\infty$, and by property {\it (b)}, $f$ is proper. Therefore, $z\rightarrow\infty$ implies $f(z)\rightarrow\infty$. Take $\ro\: >\rfo$. For $w\in f(\V)$ outside some ball, we have \ $\frac{\log \|f^{-1}(w)\|}{\log\|w\|} \le \: \ro$. \ Consequently, \ $\frac{\log \|z\|}{\log\|f(z)\|} \le \: \ro$ \ for $z\in \V$ such that $\|f(z)\|$ is large enough. It follows that
\[ \frac1{\ro (f,\V)} \le \liminf_{\V\ni z\rightarrow \infty} \frac{\log\|z\|}{\log\|f(z)\|} \le \limsup_{\V\ni z\rightarrow \infty } \frac{\log\|z\|}{\log\|f(z)\|} \le \limsup_{f(z)\rightarrow \infty, \: z\in\V } \frac{\log\|z\|}{\log\|f(z)\|} \le \: \ro. \]  
Letting $\ro\: \rightarrow \rfo$ gives {\it (c)}.

Taking into account {\it (a)}, property {\it (d)} can be easily verified. 
\end{proof}

\vskip 2mm

\subsection{The Łojasiewicz exponent on analytic sets} 
Consider an analytic set $\V\subset \CC^N$ of positive dimension and a holomorphic mapping $f:\V\rightarrow \CC^M$ on $\V$. The {\it Łojasiewicz exponent of the function $f$ (at infinity) on $\V$} is defined by setting
\[ \lfV:=\sup\{d\in \RR\: : \: \mbox{there exist } C, R>0 \ \mbox{such that} \ C \|z\|^d < \|f(z)\| \ \mbox{for all} \ z\in \V, \ \|z\|>R\}. \]
We shall use the notation $\sup\emptyset = -\infty$. By elementary arguments, one can prove that 
\be \label{wzor_lfV} \lfV = \liminf_{\V\ni z\rightarrow \infty} \frac{\log \|f(z)\|}{\log \|z\|} = \sup \left\{d\in\RR\, : \, \liminf_{\V\ni z\rightarrow \infty} \frac{\|f(z)\|}{\|z\|^d}>0 \right\}. \ee
Consequently,
\be \label{lfV<rf} \lfV \le \rf.  \ee


The Łojasiewicz exponent can be a negative number, as shown in the following example. If $\V=\CC^2$ and $f(w,z)= (w,wz-1)$, then  $\lfV=-1$ (see \cite[Ex.3.1]{Kra07}). To check this, one can use either \cite[Thm.3.2]{Kra07} or \cite[Thm.1]{ChK}.

\begin{thm} \label{rfV_lfV} 
	Let  $\V\subset \CC^N$ be an analytic set of positive dimension and let $f:\V\rightarrow \CC^M$ be an unbounded holomorphic mapping on $\V$. Then we have the following implications.
	\begin{enumerate}
		\item[(a)] If \ $\lim\limits_{\V\ni z\rightarrow\infty} f(z) = \infty $, \ \ then \ $ \rfo\ge \frac1\lfV. $ \\
		In particular, this inequality holds if $f$ is a proper mapping on $\V$.
		\vskip 2mm
		\item[(b)] If \ $\lim\limits_{f(z)\rightarrow\infty, \: z\in\V} z = \infty $, \ \ then \ $ \rfo\le \frac1\lfV. $ \\
		In particular, this inequality holds if $f$ is a polynomial mapping on $\V$.
	\end{enumerate}
	\end{thm}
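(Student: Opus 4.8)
The plan is to translate both statements into the language of the "$\limsup$ of ratios of logarithms" characterizations that are already established, namely formula (\ref{wzor_lfV}) for $\lfV$ and part (d) of Proposition \ref{wlasnosci_rfV} for $\rfo$. Writing $L:=\lfV=\liminf_{\V\ni z\to\infty}\frac{\log\|f(z)\|}{\log\|z\|}$ and, for (d), recalling $\rfo=\limsup_{f(z)\to\infty,\,z\in\V}\frac{\log\|z\|}{\log\|f(z)\|}$, the inequalities to be proved are essentially reciprocal relations between a $\liminf$ of a quantity and a $\limsup$ of its reciprocal, taken along slightly different filters. The whole difficulty is that the two filters — "$z\to\infty$ in $\V$" and "$f(z)\to\infty$ with $z\in\V$" — are genuinely different, and that is exactly what the two hypotheses in (a) and (b) are there to reconcile.

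For part (a), I would argue as follows. Assume first $\lfV>0$ (if $\lfV\le 0$ the inequality $\rfo\ge 1/\lfV$ is vacuous or trivial under the usual sign conventions, since $\rfo\ge 0$ by Proposition \ref{wlasnosci_rfV}(a); the case $\lfV=+\infty$ gives $1/\lfV=0\le\rfo$ directly). Fix $d$ with $0<d<\lfV$. By the definition of $\lfV$ there are $C,R>0$ with $C\|z\|^d<\|f(z)\|$ for all $z\in\V$, $\|z\|>R$. Now take any sequence $w_n\in f(\V)$ with $w_n\to\infty$ and pick $z_n\in f^{-1}(w_n)$ realizing (up to a factor $2$, say) the supremum defining $\|f^{-1}(w_n)\|$. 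Using the hypothesis $\lim_{\V\ni z\to\infty}f(z)=\infty$ I need the contrapositive form: if $\|z_n\|$ stayed bounded then $\|f(z_n)\|=\|w_n\|$ would stay bounded, contradiction; hence $\|z_n\|\to\infty$, so eventually $\|z_n\|>R$ and $C\|z_n\|^d<\|w_n\|$, i.e. $\|z_n\|<(\|w_n\|/C)^{1/d}$. Taking logarithms and $\limsup$ along $n$, and then letting $d\uparrow\lfV$, gives $\rfo\le\dots$ — wait, this direction yields an upper bound, so I have to be careful about which way the inequality goes; the correct reading is that the bound $\|f^{-1}(w)\|\le(\|w\|/C)^{1/d}$ forces $\frac{\log\|f^{-1}(w)\|}{\log\|w\|}\le\frac1d+o(1)$, giving $\rfo\le 1/d$ for every $d<\lfV$, hence $\rfo\le 1/\lfV$, which is the opposite of what (a) claims. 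So in fact for (a) I must instead use the reverse inequality $\|f(z)\|\le(\text{something})$, which does not follow from $\lfV$ but requires going the other way: given $d>\lfV$ there is a sequence $z_k\to\infty$ in $\V$ with $\|f(z_k)\|\le\|z_k\|^d$ eventually; then $w_k:=f(z_k)\to\infty$ by hypothesis, $z_k\in f^{-1}(w_k)$, so $\|f^{-1}(w_k)\|\ge\|z_k\|\ge\|w_k\|^{1/d}$, whence $\frac{\log\|f^{-1}(w_k)\|}{\log\|w_k\|}\ge\frac1d$ along this sequence and therefore $\rfo\ge 1/d$; letting $d\downarrow\lfV$ gives $\rfo\ge 1/\lfV$. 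The hypothesis $f(z)\to\infty$ as $z\to\infty$ is used precisely to guarantee $\log\|w_k\|\to\infty$ so the ratio is eventually meaningful and the estimate survives the $\limsup$; and the parenthetical remark follows since a proper $f$ satisfies $f(z)\to\infty$ as $z\to\infty$.

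For part (b), the roles are swapped. Assume $\lfV>0$ (again the other cases are immediate from sign conventions and $\rfo\ge0$). Fix $d<\lfV$, so $C\|z\|^d<\|f(z)\|$ for $\|z\|>R$, $z\in\V$, equivalently $\|z\|<(\|f(z)\|/C)^{1/d}$ on that range. Take a sequence $z_k\in\V$ with $f(z_k)\to\infty$, and WLOG $\|z_k\|=\|f^{-1}(f(z_k))\|$-realizing as before (or just work directly with $z_k$). By the hypothesis $\lim_{f(z)\to\infty,\,z\in\V}z=\infty$, we get $\|z_k\|\to\infty$, so eventually $\|z_k\|>R$ and the bound above applies, giving $\frac{\log\|z_k\|}{\log\|f(z_k)\|}\le\frac1d+o(1)$. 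Passing to $\limsup$ over all such sequences yields $\rfo=\limsup_{f(z)\to\infty}\frac{\log\|z\|}{\log\|f(z)\|}\le 1/d$, and letting $d\uparrow\lfV$ gives $\rfo\le1/\lfV$. For the parenthetical claim, if $f$ is a polynomial mapping on $\V$ then $\ro(f,\V)<\infty$; I would invoke that together with Proposition \ref{wlasnosci_rfV}(c) or, more directly, argue that a polynomial bound $\|f(z)\|\le C'(1+\|z\|)^{\ro(f,\V)}$ forces $f(z)\to\infty\Rightarrow z\to\infty$ (since $\|z\|$ bounded would bound $\|f(z)\|$).

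The main obstacle, and the point to be careful about, is bookkeeping the direction of each inequality and matching it to the correct hypothesis: both (a) and (b) are one-sided estimates, and in each case the hypothesis is exactly the device that lets one transfer a $\liminf$/$\limsup$ taken along "$z\to\infty$" to one taken along "$f(z)\to\infty$" (or vice versa) without losing the inequality. I would also take care of the degenerate regimes ($\lfV\le0$, $\lfV=+\infty$, $\rfo=+\infty$) up front using $\rfo\ge0$ from Proposition \ref{wlasnosci_rfV}(a) and the stated conventions $\sup\emptyset=-\infty$, $\inf\emptyset=+\infty$, so that the logarithmic-ratio manipulations only need to be run in the range where every quantity in sight is a positive real number.
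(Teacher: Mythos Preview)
Your proposal is correct and follows essentially the same approach as the paper, which compresses each part into a single chain of inequalities by combining Proposition~\ref{wlasnosci_rfV}(d) with the reciprocal identity $\limsup_{z\to\infty}\frac{\log\|z\|}{\log\|f(z)\|}=1/\lfV$ from (\ref{wzor_lfV}); the hypothesis in each part is used exactly as you describe, to pass between the filters ``$z\to\infty$'' and ``$f(z)\to\infty$''. Your initial false start in (a) is unnecessary, but the corrected sequence argument is simply an unwound version of the paper's one-line estimate, and your justification of the polynomial case in (b) is in fact cleaner than the paper's phrasing.
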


\begin{proof} Observe that by Proposition \ref{wlasnosci_rfV} and the assumption of {\it (a)}, we have
	\[\rfo = \limsup\limits_{f(z)\rightarrow \infty, \: z\in\V} \frac{\log \|z\|}{\log \|f(z)\|} \ge \limsup\limits_{z\rightarrow \infty, \: z\in\V} \frac{\log \|z\|}{\log \|f(z)\|} = \frac1{\liminf\limits_{z\rightarrow \infty, \: z\in\V} \frac{\log \|f(z)\|}{\log \|z\|}}=\frac1{\lfV}.\]
	The last equality is a consequence of (\ref{wzor_lfV}).  
		
	The inequality in {\it (b)} can be justified similarly to that in {\it (a)}. Moreover, if $f$ is a polynomial mapping on $\V$ then $f$ is the restriction to $\V$ of a polynomial mapping $F$ on $\CC^N$. By the Liouville estimate (see e.g. \cite[C.1.8]{Loj}), \ $\|F(z)\| \le M(1+\|z\|)^k$ \ for all $z\in\CC^N$ and some $M,k>0$. This implies that \ $f(z)\rightarrow\infty$ \ if \ $z\rightarrow\infty$ \ on $\V$.
\end{proof}	

\begin{corol} \label{rfV=lfV}
	If $f:\V\rightarrow \CC^M$ is a proper polynomial mapping on an analytic set $\V\subset \CC^N$ of positive dimension, then the set $f(\V)$ is also analytic of the same dimension as $\V$ and
	\[ \lfV \ = \ \frac1\rfo.  \]
\end{corol}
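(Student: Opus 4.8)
The plan is to deduce the equality by combining the two one-sided estimates of Theorem~\ref{rfV_lfV}, once it has been checked that the relevant exponents are genuinely defined in this setting. So the first step is a short preliminary: since $\V$ has positive dimension it is unbounded, because every bounded analytic subset of $\CC^N$ is finite. As $f$ is proper, $f(\V)$ cannot be bounded either — otherwise $\V=f^{-1}\bigl(f(\V)\bigr)$ would be contained in the $f$-preimage of a closed ball, hence compact, contradicting unboundedness of $\V$. Thus $f$ is an unbounded holomorphic mapping on $\V$, so both $\lfV$ and $\rfo$ make sense and Theorem~\ref{rfV_lfV} is applicable. The Proper Mapping Theorem (Theorem~\ref{proper-map-thm}) then yields immediately that $f(\V)$ is an analytic set with $\dim f(\V)=\dim\V$, which disposes of the first assertion of the corollary.

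For the equality itself I would invoke Theorem~\ref{rfV_lfV} twice. Since $f$ is proper, part~(a) of that theorem (in its ``in particular'' form for proper mappings) gives $\rfo\ge 1/\lfV$. Since $f$ is a polynomial mapping on $\V$, part~(b) (in its ``in particular'' form for polynomial mappings) gives $\rfo\le 1/\lfV$. Putting the two inequalities together yields $\rfo=1/\lfV$, that is, $\lfV=1/\rfo$.

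The only delicate point is the handling of reciprocals at extreme values. I would address it by noting that, $f$ being a polynomial mapping, the Liouville-type bound $\|f(z)\|\le M(1+\|z\|)^k$ forces $\rf<\infty$; combined with Proposition~\ref{wlasnosci_rfV}(a),(c) this places $\rfo$ in $(0,\infty)$ (positivity of $\rfo$ coming from $1/\rf\le\rfo$ once one observes $\rf>0$ for a non-constant proper $f$), so $\lfV=1/\rfo$ is an equality of positive reals; alternatively, the whole chain may simply be read in the extended reals with the conventions $1/0=+\infty$, $1/\infty=0$, in which case no such discussion is needed. There is no real obstacle here: the statement is a direct corollary of Theorem~\ref{rfV_lfV} and the Proper Mapping Theorem, the only substantive work being the verification that $f(\V)$ is unbounded.
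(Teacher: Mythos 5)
Your argument is exactly the intended one: the paper states this corollary without a written proof, as an immediate combination of parts (a) and (b) of Theorem~\ref{rfV_lfV} (proper gives $\rfo\ge 1/\lfV$, polynomial gives $\rfo\le 1/\lfV$) together with the Proper Mapping Theorem~\ref{proper-map-thm} for the analyticity and dimension claim. Your preliminary check that $f(\V)$ is unbounded (so that $\rfo$ is defined and Theorem~\ref{rfV_lfV} applies) and your remark that the reciprocals are best read in the extended reals are sensible and do not change the route.
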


\vskip 2mm

Chądzyński and Krasiński proved in 1997 the following result (see \cite[Cor.2]{ChK}).

\begin{thm} \label{Ch_Kr} \ If $\A$ is an algebraic set of positive dimension in $\CC^N$, $N\ge 2$ and $ f: \A\rightarrow \CC^M$ \ is a polynomial mapping on $\A$, then 
	\[ \lfA >0 \ \ \ \ \ \Longleftrightarrow \ \ \ f \ \mbox{is a proper mapping on } \ \A.  \]
\end{thm}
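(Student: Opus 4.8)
The plan is to prove the two implications separately, built on the formula $\lfA=\liminf_{\A\ni z\to\infty}\log\|f(z)\|/\log\|z\|$ from (\ref{wzor_lfV}) --- note that $\A$ is unbounded, being algebraic of positive dimension. The implication ``$\lfA>0\Rightarrow f$ proper'' is the soft one. If $\lfA>d>0$, I choose $C,R>0$ with $\|f(z)\|\ge C\|z\|^{d}$ for $z\in\A$, $\|z\|>R$; then for a compact set $K\subset\CC^{M}$ with $T:=\sup_{w\in K}\|w\|$, the closed set $f^{-1}(K)$ lies inside $\{z\in\A:\|f(z)\|\le T\}\subset\{z\in\A:\|z\|\le\max(R,(T/C)^{1/d})\}$, hence is compact, so $f$ is proper. (Equivalently: since $f$ is a polynomial mapping, Theorem~\ref{rfV_lfV}(b) gives $\rfa\le 1/\lfA<\infty$, whence $f$ is proper by Proposition~\ref{wlasnosci_rfV}(b).)

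For the converse, ``$f$ proper $\Rightarrow\lfA>0$'', the first move is to reduce everything to one finiteness statement: since $f$ is a proper polynomial mapping on $\A$, Corollary~\ref{rfV=lfV} gives $\lfA=1/\rfa$, so it suffices to show that the order $\rfa$ of $f$ is \emph{finite} --- equivalently, to establish a \L ojasiewicz-type inequality at infinity, namely a polynomial bound $\|f^{-1}(w)\|\le C(1+\|w\|)^{d}$ for the fibres of $f$ over $\B:=f(\A)$. Then, to prove $\rfa<\infty$, I would argue algebraically, in the spirit of Proposition~\ref{equiv_polyn_map} (Bj\"ork; Serre's Algebraic Graph Theorem): the image $\B$ is an algebraic set, and $f\colon\A\to\B$ is proper with finite fibres, hence a finite morphism (topological properness forces algebraic properness, and proper plus quasi-finite is finite); therefore $\CC[\A]$ is integral over the subring $f^{*}\CC[\B]$, so each coordinate function $z_{j}$ on $\A$ satisfies a monic relation $z_{j}^{m}+(b_{m-1}\circ f)\,z_{j}^{m-1}+\dots+(b_{0}\circ f)=0$ on $\A$ with $b_{i}\in\CC[\B]$. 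A Liouville estimate on $\B$ bounds $|b_{i}(w)|\le M(1+\|w\|)^{k}$, and the Cauchy bound on the moduli of roots of a monic polynomial gives $|z_{j}(z)|\le 1+\max_{i}|b_{i}(f(z))|$; summing over $j$ yields $\|z\|\le C(1+\|f(z)\|)^{k}$ on $\A$, i.e. $\rfa\le k<\infty$, and hence $\lfA=1/\rfa>0$.

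The genuine obstacle is precisely this last estimate: properness of a polynomial mapping must be upgraded from a qualitative statement to a \emph{polynomial} control of the fibres, and that is where one has to leave the purely complex-analytic framework and invoke algebraic finiteness (equivalently, Serre's Algebraic Graph Theorem / Bj\"ork's theorem). A soft substitute via the Rudin--Sadullaev Theorem~\ref{Rudin_Sadullaev} is unavailable, since that result only yields a \emph{linear} bound relative to some auxiliary subspace: already $f(x,y)=(x-y^{2},y)$ on $\A=\CC^{2}$ is a polynomial automorphism --- hence proper --- with $\lfA=\tfrac12$, so along $\{x=y^{2}\}$ one has $\|z\|\asymp\|f(z)\|^{2}$ and no linear estimate can hold. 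A fully self-contained alternative for $\rfa<\infty$ is real semialgebraic geometry: the set $\Sigma:=\{(\|z\|^{2},\|f(z)\|^{2}):z\in\A\}\subset\RR^{2}$ is semialgebraic by Tarski--Seidenberg, properness forces $\theta(r):=\inf\{t:(r,t)\in\Sigma\}\to+\infty$ as $r\to+\infty$, and a semialgebraic function tending to infinity satisfies $\theta(r)\ge c\,r^{\alpha}$ for large $r$ with some rational $\alpha>0$, whence $\|f(z)\|^{2}\ge\theta(\|z\|^{2})\ge c\,\|z\|^{2\alpha}$ and $\lfA\ge\alpha>0$. Once $\rfa<\infty$ is secured, the remainder is routine bookkeeping with the material of Section~6.
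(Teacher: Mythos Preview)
The paper does not prove this theorem; it is quoted from Ch\k{a}dzy\'{n}ski and Krasi\'{n}ski \cite[Cor.~2]{ChK} and used as a black box (e.g.\ in Corollary~\ref{str_14} and throughout Section~7). So there is nothing in the paper itself to compare your argument against.

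That said, your proof is correct. The forward direction is routine, and your parenthetical alternative via Theorem~\ref{rfV_lfV}(b) and Proposition~\ref{wlasnosci_rfV}(b) is legitimate and non-circular, since both results precede Theorem~\ref{Ch_Kr} and do not rely on it. For the converse you rightly identify that the real content is upgrading topological properness to a polynomial fibre bound $\|z\|\le C(1+\|f(z)\|)^{k}$, and both routes you propose---integrality of $\CC[\A]$ over $f^{*}\CC[\B]$ followed by the Cauchy root bound, or Tarski--Seidenberg plus the Puiseux-type asymptotics of one-variable semialgebraic functions---are valid. One minor simplification: once $\|z\|\le C(1+\|f(z)\|)^{k}$ is in hand, formula (\ref{wzor_lfV}) yields $\lfA\ge 1/k>0$ directly, so the passage through $\rfa$ and Corollary~\ref{rfV=lfV} is an avoidable detour. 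For the record, the argument in \cite{ChK} is closer in spirit to your second approach: it uses the curve selection lemma at infinity to produce a real meromorphic arc in $\A$ along which the relevant liminf is realized, and then reads off positivity from the Puiseux expansions along that arc. Your integrality argument is a structurally cleaner alternative, though the step ``topologically proper $\Rightarrow$ algebraically finite'' for morphisms of complex affine varieties, while true, is not entirely elementary and would warrant a reference in a full write-up.
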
  

Taking into account $(b)$ in Proposition \ref{wlasnosci_rfV} and Corollary  \ref{rfV=lfV}, we get the following important consequence of the Chądzyński and Krasiński result.

\begin{corol}  \label{str_14} 
	Under the assumption of the above theorem, if $f$ is not bounded on $\A$, then 
	\[ \rfa < \infty \ \ \ \ \ \Longleftrightarrow \ \ \ f \ \mbox{is a proper mapping on } \ \A.  \]
\end{corol}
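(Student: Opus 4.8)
The plan is to obtain both implications of Corollary~\ref{str_14} by combining the three results recalled immediately beforehand: part~(b) of Proposition~\ref{wlasnosci_rfV}, Corollary~\ref{rfV=lfV}, and Theorem~\ref{Ch_Kr}. Throughout, the standing assumption that $f$ is unbounded on $\A$ means precisely that $f(\A)$ is an unbounded set, so that the order $\rfa$ of $f$ on $\A$ is well defined.

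For the forward implication, assume $\rfa<\infty$. Applying Proposition~\ref{wlasnosci_rfV}(b) with $\V=\A$, whose hypotheses ($\A$ analytic of positive dimension, $f$ holomorphic on $\A$, $f(\A)$ unbounded) are all in force, we conclude at once that $f$ is a proper mapping on $\A$. For the converse, assume that $f$ is proper. Since $f$ is a polynomial mapping on the algebraic set $\A$, its growth exponent is finite, $\rfA<\infty$, by (\ref{osz_rfA}) (equivalently, by Proposition~\ref{equiv_polyn_map}); combined with (\ref{lfV<rf}) this yields $\lfA\le\rfA<\infty$. On the other hand, $\A$ being algebraic and $f$ a proper polynomial mapping, Theorem~\ref{Ch_Kr} gives $\lfA>0$. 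Finally Corollary~\ref{rfV=lfV}, which applies because $f$ is a proper polynomial mapping on the positive-dimensional analytic set $\A$, provides the identity $\lfA=1/\rfa$; inverting, $\rfa=1/\lfA<\infty$, and the proof is complete.

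I do not anticipate a substantive obstacle here: the corollary is essentially a bookkeeping consequence of the machinery already assembled, and what care is required lies only in matching the hypotheses of each cited result and in ordering the deductions correctly. The one point worth flagging is that, for the converse, one must know that $\lfA$ is a \emph{positive} real number: positivity is exactly the content of Theorem~\ref{Ch_Kr}, while finiteness of $\lfA$ (which, together with positivity, makes $\rfa=1/\lfA$ a finite, indeed strictly positive, number, as the order of a proper polynomial mapping should be) follows from the chain $\lfA\le\rfA<\infty$ established above. With these in place, the stated equivalence is obtained by reading Corollary~\ref{rfV=lfV} in both directions together with Proposition~\ref{wlasnosci_rfV}(b).
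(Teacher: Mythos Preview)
Your proof is correct and follows essentially the same route the paper indicates: the forward implication via Proposition~\ref{wlasnosci_rfV}(b), and the converse by combining Theorem~\ref{Ch_Kr} with Corollary~\ref{rfV=lfV}. The extra step you insert to ensure $\lfA<\infty$ is harmless but not strictly needed for the conclusion $\rfa<\infty$, since once $\lfA>0$ the identity $\lfA=1/\rfa$ already forces $\rfa=1/\lfA<\infty$.
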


\section {Holomorphic transformations of pluricomplex Green functions}

Recall that the pluricomplex Green functions on analytic sets which are not algebraic, are never locally bounded above (see Theorem \ref{lok_bound_VK}). Therefore, in this section we consider only algebraic sets $\A\subset\CC^N$ and holomorphic mappings $f$ defined on $\A$. We are only  interested in non-trivial cases, therefore we  assume that $\A$ is of positive dimension and $f$ is not bounded. We shall estimate $V_{K} \circ f$ on $\A$ from below and from above for a compact set $K\subset f(\A)$. 

\subsection{Estimate of \ $V_{K} \circ f$ \ from below} Let $\A\subset\CC^N$ be an irreducible algebraic set of positive dimension and $f: \A \rightarrow \CC^{M}$ \ be a proper polynomial mapping. We already know that the image $f(\A)$ is an algebraic set (see the beginning of subsection 6.1). Moreover, by Theorem \ref{proper-map-thm}, dim$\,\A={\rm dim}\, f(\A)$ 
and $f(\A)$ is irreducible. Recall that an algebraic set is irreducible if and only if it is irreducible when regarded as an analytic set (see e.g. \cite[VII.11.1]{Loj}). Since any irreducible analytic set of positive dimension is unbounded, the order of $f$ is well defined on $\A$ and it is finite (see Corollary \ref{str_14}). 

\begin{thm} \label{VK_z_dolu}
	Let $\A\!\subset\! \CC^{N}$ be an irreducible algebraic set of positive dimension, $f\!:\A\rightarrow \CC^M$ \ be a proper polynomial mapping and $\B:=f(\A)$. Assume that $\B$ is locally irreducible. Then, for every compact set $K\subset \B$, 
	\be \label{nier_VK_z_dolu}
	V_{f^{-1}(K),\A} \ \le \ \ \rfb \ \ V_{K,\B}\circ f \ \ \ \ \ \ on \ \A,
	\ee
	which we can write equivalently as
	\be \label{nier_VK_z_dolu_krotko}
	\lfA \ \ V_{f^{-1}(K)} \ \le \ \ V_{K}\circ f \ \ \ \ \ \ on \ \A.
	\ee
\end{thm}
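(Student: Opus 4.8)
The plan is to push forward the defining functions of $V_{K,\B}$ through $f$ and pull them back to $\A$, using the basic lemma on proper holomorphic mappings (Lemma~\ref{basic_lemma}) to control growth. First I would unpack the equivalence of the two formulations: since $\B$ is locally irreducible, Theorem~\ref{VKV=VK} gives $V_{K,\B}=V_K$ on $\B$ and $V_{f^{-1}(K),\A}=V_{f^{-1}(K)}$ on $\A$; moreover, by Corollary~\ref{rfV=lfV} applied to the proper polynomial mapping $f$, we have $\lfA=1/\rfb$, so (\ref{nier_VK_z_dolu}) and (\ref{nier_VK_z_dolu_krotko}) are literally the same inequality. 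Thus it suffices to prove one of them; I would work with (\ref{nier_VK_z_dolu}).

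The core of the argument: fix $u\in\mathcal{L}(\B)$ with $u|_K\le 0$. Then $u\circ f\in PSH(\A)$ by Proposition~\ref{a,b}, and $u\circ f\le 0$ on $f^{-1}(K)$. The key point is the growth estimate. Since $u\in\mathcal{L}(\B)$, there is $c$ with $u(w)\le \log(1+\|w\|)+c$ on $\B$. Hence on $\A$,
\begin{equation*}
(u\circ f)(z)\le \log(1+\|f(z)\|)+c.
\end{equation*}
Now I would invoke the definition of the growth exponent $\rho\!\!\left(f,\A\right)$: for any $d>\rho\!\!\left(f,\A\right)$ there is $C>0$ with $\|f(z)\|\le C(1+\|z\|)^d$ for all $z\in\A$, so $(u\circ f)(z)\le d\log(1+\|z\|)+c'$, which shows $\tfrac1d\,(u\circ f)\in\mathcal{L}(\A)$ after adjusting the constant — and it is $\le 0$ on $f^{-1}(K)$. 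Therefore $\tfrac1d\,(u\circ f)\le V_{f^{-1}(K),\A}$, i.e. $u\circ f\le d\,V_{f^{-1}(K),\A}$ on $\A$. Taking the supremum over all admissible $u$ gives $V_{K,\B}\circ f\le d\,V_{f^{-1}(K),\A}$, and letting $d\downarrow\rho\!\!\left(f,\A\right)$ yields $V_{K,\B}\circ f\le \rho\!\!\left(f,\A\right)V_{f^{-1}(K),\A}$. But this is the \emph{reverse} of the desired inequality with the wrong exponent, so the naive direction is not what we want.

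Consequently the actual proof must go the other way: take $u\in\mathcal{L}(\A)$ with $u|_{f^{-1}(K)}\le 0$, and push it forward. Here is where Lemma~\ref{basic_lemma} enters: since $f:\A\to\B$ is a proper holomorphic mapping between irreducible analytic sets of the same dimension and $\B$ is locally irreducible, $v:=f[u]$, defined by $v(w)=\max\{u(z):z\in f^{-1}(w)\}$, lies in $PSH(\B)$. Clearly $v|_K\le 0$ since every fibre over a point of $K$ sits inside $f^{-1}(K)$. For the growth of $v$: if $u(z)\le\log(1+\|z\|)+c$ on $\A$, then for $w\in\B$, picking $z\in f^{-1}(w)$ realizing the max, $v(w)\le\log(1+\|z\|)+c\le\log(1+\|f^{-1}(w)\|)+c$, and the order $\rho\!\!\left(f^{-1},\B\right)$ controls this: for any $d>\rho\!\!\left(f^{-1},\B\right)$ there is $C$ with $\|f^{-1}(w)\|\le C(1+\|w\|)^d$ on $\B$, whence $v(w)\le d\log(1+\|w\|)+c'$, so $\tfrac1d v\in\mathcal{L}(\B)$ with $\tfrac1d v\le 0$ on $K$. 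Thus $\tfrac1d v\le V_{K,\B}$ on $\B$, i.e. $u(z)\le v(f(z))\le d\,V_{K,\B}(f(z))$ for all $z\in\A$ (using $u(z)\le v(f(z))$, which holds since $z\in f^{-1}(f(z))$). Taking the supremum over $u$ and letting $d\downarrow\rho\!\!\left(f^{-1},\B\right)$ gives exactly (\ref{nier_VK_z_dolu}).

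The main obstacle is the push-forward step: one must be sure that $f[u]$ is genuinely plurisubharmonic on $\B$ (not merely weakly so), which is precisely why local irreducibility of $\B$ is assumed and why Lemma~\ref{basic_lemma} together with Demailly's Theorem~\ref{Dem_cor} is needed; without it, $v$ would only be weakly plurisubharmonic and might fail to belong to $\mathcal{L}(\B)$. A secondary technical point is the routine bookkeeping with additive constants when converting $\log(1+\|z\|)$-type bounds into membership in the Lelong class after dividing by $d$ — this is harmless since $d>0$ and constants can be absorbed — and the verification, via Theorem~\ref{VKV=VK} and Corollary~\ref{rfV=lfV}, that the two displayed forms of the conclusion coincide.
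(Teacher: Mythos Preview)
Your proposal is correct and follows essentially the same push-forward strategy as the paper: take a competitor $u$ for $V_{f^{-1}(K),\A}$, form the push-forward $v=f[u]$ via Lemma~\ref{basic_lemma} (using local irreducibility of $\B$ to get $v\in PSH(\B)$), control its growth through the order $\rho(f^{-1},\B)$, and conclude $\tfrac1d v\le V_{K,\B}$, hence $u\le d\,V_{K,\B}\circ f$. The paper differs only cosmetically, invoking the Siciak--Zaharjuta formula so that the competitors $u=\tfrac1{\deg p}\log^+|p|$ are already continuous functions in $\mathcal{L}(\CC^N)$, but this is not essential since Lemma~\ref{basic_lemma} handles general $u\in PSH(\A)$; your direct use of $\mathcal{L}(\A)$ is equally valid. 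The initial false start (pulling back instead of pushing forward) should of course be excised from a final write-up, but the second half is the right argument.
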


\begin{proof}
	By Siciak-Zaharjuta formula and Theorem \ref{VKV=VK}, the pluricomplex Green function $V_{\!\!\!f^{\!-1}\!(K),\A}$ can be written as
	\begin{align*} 
	V_{f^{-1}(K),\A}(z) &= V_{f^{-1}(K)}(z)  = \log \sup\left\{|p(z)|^{1/{\rm deg}\, p}  : \: p\in
	\CC[z], \ {\rm deg}\, p\ge 1, \max\limits_{z\in f^{-1}(K)} |p(z)| = 1
	\right\} \\
	&= \sup\left\{\frac1{{\rm deg}\, p} \log^+ |p(z)| \: : \: p\in
	\CC[z], \ {\rm deg}\, p\ge 1, \ \max\limits_{z\in f^{-1}(K)} |p(z)| = 1
	\right\}
	\end{align*}
	 for $z\in\A$. Fix $p\in
	 \CC[z]$ such that ${\rm deg}\, p\ge 1$ and $\max\limits_{z\in f^{-1}(K)} |p(z)| = 1$. The function 
	 \[ u:=\tfrac1{{\rm deg}\, p} \: \log^+ |p| \]
	 is plurisubharmonic and continuous in $\CC^N$. Moreover, $u\in\mathcal{L}(\CC^N)$. Set 
	 \[v(w):=\max \ u(f^{-1}(w))=\max \{u(z)\: : \: f(z)=w\} \ \  \ \ {\rm for} \ \ w\in \B. \] By Lemma \ref{basic_lemma}, $v$ is plurisubharmonic on $\B$. We will first show that $v\in \mathcal{L}(\B)$.
	Since the order of $f$ is finite, we fix an arbitrary $\ro\:>\rfb$. For $w\in \B$, which is sufficiently far from zero and such that $\|f^{-1}(w)\|\le \|w\|^\ro$, we have
	\begin{align*}
	\tfrac1\ro \:v(w) &=\tfrac1\ro \max \{u(z)\: : \: z\in f^{-1}(w)\} \le \tfrac1\ro \max \{c+ \log(1+\|z\|) \: : \: z\in f^{-1}(w)\}\\ 
	&\le \tfrac1\ro \left[c+ \log(1+\|f^{-1}(w)\|)\right] \le \tfrac1\ro \left[c+ \log(1+\|w\|^\ro)\right]\\ &\le \tfrac1\ro \left[c+ \log(2(1+\|w\|)^\ro)\right]
	= \tfrac{c}\ro + \tfrac{\log 2}\ro + \log(1+\|w\|), 
	\end{align*}
where $c=c(u)$ is from the definition of $\mathcal{L}(\CC^N)$. This gives us an estimate from above of $\tfrac1\ro\,v(w)$ for all $w\in \B$ with a sufficiently large $\|w\|$. We can obtain an analogous estimate on the whole $\B$, because $v$ is locally bounded on $\B$ as a plurisubharmonic function. Thus $\tfrac1\ro\, v\in\mathcal{L}(\B)$.

Observe that, for $w\in K$,
\[ v(w) \le \max \{u(z)\: : \: z\in f^{-1}(K)\} = \max \{\tfrac1{{\rm deg}\, p} \: \log^+ |p(z)| \: : \: z\in f^{-1}(K)\} =0, \] 	
and consequently, 
\[ \tfrac1\ro\: v\le V_{K,\B} \ \ \ \ {\rm on} \ \ \B.\]	
We now fix $z\in\A$ and note that
\[ \tfrac1\ro \, u(z) \le \tfrac1\ro\, \max u(f^{-1}(f(z))) =\tfrac1\ro\, v(f(z)) \le  V_{K,\B} (f(z)), \]
which yields the inequality
\[ \tfrac1\ro \, V_{f^{-1}(K)} \le V_{K,\B} \circ f \ \ \ \ {\rm on} \ \ \A.\]
By letting $\ro$ tend to $\rfb$, in view of Theorems \ref{VKV=VK} and \ref{rfV_lfV}, we obtain the inequalities (\ref{nier_VK_z_dolu}) and (\ref{nier_VK_z_dolu_krotko}). 
\end{proof}

As a consequence, we get the following statement. 

\begin{corol}
	Under the assumptions of Theorem \ref{VK_z_dolu}, the order of mapping $f$ on $\A$ is positive and the Łojasiewicz exponent of $f$ on $\A$ is finite.  
\end{corol}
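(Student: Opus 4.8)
The plan is to read off both conclusions from the identity $\lfA=1/\rfa$ furnished by Corollary~\ref{rfV=lfV}. Its hypotheses hold in our situation: $\A$ is irreducible algebraic of positive dimension, hence unbounded, and $f$ is a proper polynomial mapping on $\A$, so (as already noted just before Theorem~\ref{VK_z_dolu}) the order $\rfa$ is well defined — a proper map cannot send the unbounded set $\A$ into a bounded one, so $\B=f(\A)$ is unbounded — and Corollary~\ref{rfV=lfV} applies, giving $\lfA=1/\rfa$ with the conventions $1/0=+\infty$ and $1/\infty=0$.

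First I would check that $\lfA<\infty$. Since $f$ is a polynomial mapping on the algebraic set $\A$, it is the restriction of a polynomial mapping $F\colon\CC^N\to\CC^M$, so $\rfA<\infty$ by the Liouville estimate (equivalently by Proposition~\ref{equiv_polyn_map} together with (\ref{osz_rfA})); combining this with (\ref{lfV<rf}) applied to $\V=\A$ gives $\lfA\le\rfA<\infty$, which is the second assertion. For the first assertion, recall that $\rfa\ge0$ by Proposition~\ref{wlasnosci_rfV}(a); if $\rfa$ were $0$, then $\lfA=1/\rfa=+\infty$, contradicting the finiteness just proved. Hence $\rfa>0$, i.e.\ the order of $f$ on $\A$ is positive.

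The statement is essentially a bookkeeping consequence of the material developed in Sections~6 and~7, so there is no real obstacle; one could alternatively argue that Theorem~\ref{Ch_Kr} gives $\lfA>0$ directly from properness of $f$, but the positivity of the order still relies on knowing $\lfA<\infty$. The only point deserving a word of care is confirming that $f(\A)$ is unbounded, so that $\rfa$ is meaningful and Corollary~\ref{rfV=lfV} is applicable, and this is immediate from properness of $f$ together with the unboundedness of the irreducible, positive-dimensional algebraic set $\A$.
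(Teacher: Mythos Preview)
Your proof is correct. The paper states this corollary without proof, only the phrase ``As a consequence, we get the following statement,'' presumably referring to Theorem~\ref{VK_z_dolu}. The intended reading is most likely: take $K=\B\cap\bar B(b,\delta)$ so that both Green functions in (\ref{nier_VK_z_dolu}) are locally bounded and $V_{f^{-1}(K),\A}$ is unbounded on $\A$; then inequality (\ref{nier_VK_z_dolu}) forces $\rfb>0$, whence $\lfA=1/\rfb<\infty$ by Corollary~\ref{rfV=lfV}.

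Your route is slightly different and in fact more self-contained: you bypass Theorem~\ref{VK_z_dolu} entirely and argue only with the Section~6 material, deducing $\lfA\le\rfA<\infty$ from (\ref{lfV<rf}) and the polynomial nature of $f$, and then getting $\rfa>0$ from the identity $\lfA=1/\rfa$. This shows the corollary does not actually depend on the Green-function estimate, whereas the paper's phrasing ties it to Theorem~\ref{VK_z_dolu}. Both approaches ultimately rest on Corollary~\ref{rfV=lfV}.
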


Theorem \ref{Ch_Kr} shows that the Łojasiewicz exponent of $f$ is positive if $f$ is polynomial and proper on $\A$. Now, we will show that $\lfA$ is the maximal positive constant $d$ in   inequality (\ref{VK_z_dolu_d}). Recall that $B(b,\delta):=\{w\: : \: \|w-b\|\le \delta\}$ and $\bar{B}(b,\delta)$ is the closed ball.  

\begin{proposition} \label{converse_VK_z_dolu}
	Let $\A\!\subset\! \CC^{N}$ be an irreducible algebraic set of positive dimension and let $f\!:\A\rightarrow \CC^M$ \ be a polynomial mapping such that $\B:=f(\A)$ is an  irreducible algebraic set. Assume that for  certain $b\in\B$ and $\delta>0$ the preimage of $K:=\B \cap \bar{B}(b,\delta)$ is compact in $\A$ and the inequality 
	\be \label{VK_z_dolu_d} d \ V_{f^{-1}(K),\A} \ \le \ V_{K,\B} \circ f \ \ \ \ holds \ on \ \ \A
	\ee
	for some $d>0$. Then \ $\lfA\ge d$. In consequence, $f$ is proper and 
	\begin{align*} 
	\lfA \!&=\! \max\{d\!>\!0 \: : \: \mbox{inequality (\ref{VK_z_dolu_d}) holds with} \ K\!=\!\B\cap\bar{B}(b,\delta) \ \mbox{for some} \ b\!\in\! \B, \ \delta\!>\!0\} \\ 
	&=\max\{d\!>\!0 \: : \: \mbox{inequality (\ref{VK_z_dolu_d}) holds for all compacts} \ K\subset \B\}. 
	\end{align*}
\end{proposition}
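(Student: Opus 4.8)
The plan is to compare the two sides of (\ref{VK_z_dolu_d}) with their leading logarithmic terms at infinity, extract from this a Liouville-type lower bound for $\|f\|$, and then close the loop with Theorem \ref{VK_z_dolu}.

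First I would bound the left-hand side of (\ref{VK_z_dolu_d}) from below. Since $f^{-1}(K)$ is compact, I pick $a\in\A$ and $\rho>0$ with $f^{-1}(K)\subset\bar{B}(a,\rho)$; then monotonicity of the extremal function under inclusion, the classical formula $V_{\bar{B}(a,\rho)}(z)=\log^+\frac{\|z-a\|}{\rho}$, and Theorem \ref{VKV=VK} give a constant $C_1$ with
\[ V_{f^{-1}(K),\A}(z)=V_{f^{-1}(K)}(z)\ \ge\ \log\|z\|-C_1\qquad\text{for }z\in\A,\ \|z\|\text{ large}. \]
Next I would bound the right-hand side from above. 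After translating $b$ to the origin, Example \ref{nplp_on_A} applied to the irreducible algebraic set $\B$ (using its Sadullaev decomposition) furnishes a constant $C_2$ with $V_{K,\B}(w)\le\log^+\|w\|+C_2$ for $w\in\B$, hence $V_{K,\B}(f(z))\le\log^+\|f(z)\|+C_2$. Feeding both estimates into (\ref{VK_z_dolu_d}) yields, for all $z\in\A$ with $\|z\|$ sufficiently large,
\[ d\log\|z\|-C\ \le\ \log\bigl(1+\|f(z)\|\bigr),\qquad C:=dC_1+C_2. \]

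Since $d>0$, this forces $\|f(z)\|\to\infty$ as $\A\ni z\to\infty$; in particular $\log(1+\|f(z)\|)=\log\|f(z)\|+o(1)$, so dividing by $\log\|z\|$ and letting $z\to\infty$, the characterization (\ref{wzor_lfV}) gives $\lfA=\liminf_{\A\ni z\to\infty}\frac{\log\|f(z)\|}{\log\|z\|}\ge d$, which is the first assertion. In particular $\lfA\ge d>0$, so by Theorem \ref{Ch_Kr} (or directly: $\|f(z)\|\to\infty$ as $z\to\infty$ on the closed set $\A$ makes the preimage of every compact set compact) the mapping $f$ is proper; consequently $f^{-1}(K')$ is compact for every compact $K'\subset\B$. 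Re-running the first part for an arbitrary ball-intersection shows that every $d>0$ for which (\ref{VK_z_dolu_d}) holds with a single $K=\B\cap\bar{B}(b,\delta)$ — and a fortiori every $d$ for which it holds for all compacts $K'\subset\B$ — satisfies $d\le\lfA$. To see that $\lfA$ is attained in both sets I would invoke Theorem \ref{VK_z_dolu}: its equivalent form (\ref{nier_VK_z_dolu_krotko}) asserts precisely that $\lfA\,V_{f^{-1}(K'),\A}\le V_{K',\B}\circ f$ on $\A$ for every compact $K'\subset\B$ (here one uses that $f$ is a proper polynomial mapping onto the locally irreducible set $\B$, as in the hypotheses of Theorem \ref{VK_z_dolu}). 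Hence $\lfA$ lies in both sets, and the two claimed maxima equal $\lfA$.

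The main obstacle is the opening step, namely producing the two matching logarithmic estimates: the lower bound for $V_{f^{-1}(K),\A}$ is elementary, but the upper bound for $V_{K,\B}$ at infinity genuinely relies on the algebraicity of $\B$ — through the Sadullaev inclusion recorded in Example \ref{nplp_on_A} — since a priori one only knows that $V_{K,\B}$ is locally bounded above, which is not enough to control its growth. Everything else is routine bookkeeping with limits inferior, together with (\ref{wzor_lfV}) and the earlier results of Section 6.
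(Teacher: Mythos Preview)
Your argument is essentially the paper's: both proofs sandwich (\ref{VK_z_dolu_d}) between the elementary lower bound $V_{f^{-1}(K),\A}(z)\ge\log^+\frac{\|z\|}{R}$ coming from $f^{-1}(K)\subset\bar B(0,R)$ and the upper bound $V_{K,\B}(w)\le\log^+\frac{\|w\|}{r}$ coming from Example~\ref{nplp_on_A}, then read off $\lfA\ge d$ via (\ref{wzor_lfV}) and invoke Theorem~\ref{Ch_Kr} for properness. You spell out the ``In consequence'' part more explicitly than the paper, which simply stops after establishing properness.

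One caveat: in your final step you invoke Theorem~\ref{VK_z_dolu} and write that $\B$ is ``locally irreducible'', but the proposition only assumes $\B$ is irreducible. Theorem~\ref{VK_z_dolu} genuinely needs local irreducibility (it enters through Lemma~\ref{basic_lemma}, where the push-forward is only weakly plurisubharmonic otherwise). The paper's proof is silent on this point as well, and in every subsequent application of the proposition (Theorems~7.8--7.11) local irreducibility of $\B$ is part of the standing hypotheses, so in practice the issue does not arise; but strictly speaking your parenthetical claims a hypothesis that is not in the statement.
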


\begin{proof} For the sake of simplicity, we may suppose that $0\in \B$ and take $b=0$.
	Since $\B$ is an  irreducible algebraic set,  we can find -- as it was done in Example \ref{nplp_on_A} -- a positive number $r$ such that
	\[ V_{K,\B}(w) \le \log ^+ \tfrac{\|w\|}r,  \]
	 for all $w\in \B$.
	For some positive $R$, we have $f^{-1}(K)\subset \A\cap\bar{B}(0,R)$. Fix $w\in\B$ and $z\in f^{-1}(w)$. In view of our assumptions, we have
	\[ \log^+ \tfrac{\|z\|}{R} = V_{\bar{B}(0,R)}(z) \le V_{\A\cap\bar{B}(0,R)}(z) = V_{\A\cap\bar{B}(0,R),\A}(z) \le V_{f^{-1}(K),\A} (z) \le \tfrac1d V_{K,\B} (w) \le \tfrac1d \log^+ \tfrac{\|w\|}{r}.  \]
	Consequently, for $z\in f^{-1}(w)$ far enough from zero, we get \ $\tfrac{\|z\|^d}{R^d} \le \tfrac{\|f(z)\|}{r}$ \ 
	and 
	\[ \lfA = \liminf_{\A\ni z\rightarrow \infty} \frac{\log \|f(z)\|}{\log \|z\|} \ge \liminf_{\A\ni z\rightarrow \infty} \frac{\log r -d\,\log R +d\, \log\|z\|}{\log \|z\|} =d.	\]
Therefore, $f$ is proper (see Theorem \ref{Ch_Kr}). This finishes the proof. \end{proof}

A similar result holds if we take the intersection of an Euclidean ball with $\A$ instead of $\B$. 

\begin{proposition} \label{converse_VE_z_gory}
	Let $\A\!\subset\! \CC^{N}$ be an irreducible algebraic set of positive dimension and let  $f\!:\A\rightarrow \CC^M$ \ be a polynomial mapping with discrete fibres such that $\B:=f(\A)$ is a locally irreducible algebraic set and \ dim$\,\A = \ \,$dim$\,\B$. \ If, for some $a\in\A$ and $\delta>0$,  the inequality 
	\be \label{VE_z_gory_d} d \ V_{E,\A} \ \le \ V_{f(E),\B} \circ f \ \textit{ holds  on }\A,
	\ee
	where $E:=\A \cap \bar{B}(a,\delta)$ and $d>0$ is a constant, then \ $\lfA\ge d$. Consequently, $f$ is proper. 
\end{proposition}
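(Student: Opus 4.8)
The plan is to mimic the argument used in the proof of Proposition \ref{converse_VK_z_dolu}, but now placing the ``small ball'' on the source side $\A$ rather than on the target side $\B$. First I would fix the constants: by the open mapping theorem (Theorem \ref{open-map-thm}), since $f$ has discrete fibres, $\B$ is locally irreducible and $\dim\A=\dim\B$, the mapping $f$ is open; combined with the hypothesis that the relevant preimages/images behave well (and using Corollary \ref{wymiar_obrazu} once properness is in hand), $f$ is surjective onto $\B$. For the sake of simplicity I would assume $a=0\in\A$ and write $E=\A\cap\bar B(0,\delta)$. Following Example \ref{nplp_on_A}, because $\A$ is irreducible algebraic I can find a positive number $r$ such that
\[
V_{E,\A}(z)\ \ge\ \log^+\frac{\|z\|}{\rho}\qquad\text{and}\qquad V_{E,\A}(z)\ \le\ \log^+\frac{\|z\|}{r}\quad\text{for all }z\in\A,
\]
where $\rho>0$ is chosen so that $E\supset\A\cap\bar B(0,r')$ for suitable $r'$; the precise form I will extract from the estimate chain in Example \ref{nplp_on_A}. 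The key point I actually need is the \emph{lower} bound: there is $r>0$ with $V_{E,\A}(z)\ge\log^+(\|z\|/\rho)$ on $\A$ — this is the Bernstein–Walsh type lower estimate coming from $E\supset\A\cap\bar B(0,\rho)$ and the comparison $V_{\bar B(0,\rho)}\le V_{\A\cap\bar B(0,\rho)}=V_{\A\cap\bar B(0,\rho),\A}$.

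Next, I would control $V_{f(E),\B}\circ f$ from above. Since $f$ is a polynomial mapping, $f(E)$ is a compact subset of $\B$, and $\B$ is a locally irreducible algebraic set, so by Theorem \ref{lok_bound_VK} (applied on $\B$) the function $V_{f(E),\B}$ is locally bounded above; more to the point, $f(E)$ is contained in some closed ball $\bar B(0,R)\cap\B$ (as $E$ is compact and $f$ continuous), whence by the monotonicity of the Green function and the estimate $V_{\bar B(0,R)}\le$ something, we get $V_{f(E),\B}(w)\le\log^+(\|w\|/r_1)$ for all $w\in\B$ and some $r_1>0$. Feeding both estimates into the standing hypothesis (\ref{VE_z_gory_d}): for $z\in\A$,
\[
d\,\log^+\frac{\|z\|}{\rho}\ \le\ d\,V_{E,\A}(z)\ \le\ V_{f(E),\B}(f(z))\ \le\ \log^+\frac{\|f(z)\|}{r_1}.
\]
Hence, for $z\in\A$ with $\|z\|$ large enough, $\|f(z)\|\ge r_1\,(\|z\|/\rho)^d$, and therefore
\[
\lfA=\liminf_{\A\ni z\to\infty}\frac{\log\|f(z)\|}{\log\|z\|}\ \ge\ \liminf_{\A\ni z\to\infty}\frac{\log r_1-d\log\rho+d\log\|z\|}{\log\|z\|}=d,
\]
which is the claimed inequality $\lfA\ge d$. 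Since $d>0$, Theorem \ref{Ch_Kr} then forces $f$ to be a proper polynomial mapping on $\A$, giving the final assertion.

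The main obstacle I anticipate is the upper estimate $V_{f(E),\B}(w)\le\log^+(\|w\|/r_1)$: one has to be careful that $\B$ being \emph{locally} irreducible (rather than having an irreducible projective closure) is enough to run the Example \ref{nplp_on_A} argument on $\B$, and that $f(E)$ is genuinely non-pluripolar on $\B$ so that $V_{f(E),\B}$ is finite — non-pluripolarity follows because $E$ has nonempty interior in $\A$ and $f$ is open, so $f(E)$ has nonempty interior in $\B$, hence is non-pluripolar by Theorem \ref{lok_bound_VK}. A secondary subtlety is that (\ref{VE_z_gory_d}) is only assumed to hold for \emph{this particular} $E$, so I cannot appeal to the cleaner statements for arbitrary compacts; but since the lower bound for $V_{E,\A}$ and the upper bound for $V_{f(E),\B}$ each concern only this single $E$, the argument goes through unchanged. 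Everything else is a routine repetition of the $\liminf$ computation already carried out in Proposition \ref{converse_VK_z_dolu}.
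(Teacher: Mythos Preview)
Your overall strategy is the same as the paper's: bound $V_{E,\A}$ from below by $\log^+(\|z\|/\delta)$, bound $V_{f(E),\B}$ from above by a logarithm, chain through the hypothesis, and compute the $\liminf$. The lower bound and the final computation are fine.

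The genuine gap is in your upper bound for $V_{f(E),\B}$. You write that $f(E)\subset \bar B(0,R)\cap\B$ and then invoke monotonicity to conclude $V_{f(E),\B}(w)\le\log^+(\|w\|/r_1)$. But monotonicity of Green functions runs the other way: $K_1\subset K_2$ implies $V_{K_1}\ge V_{K_2}$. So the inclusion $f(E)\subset \bar B(0,R)\cap\B$ only yields $V_{f(E),\B}\ge V_{\bar B(0,R)\cap\B,\B}$, which is useless here. Knowing that $V_{f(E),\B}$ is locally bounded (via Theorem \ref{lok_bound_VK}) is also not enough: you need a \emph{global} logarithmic upper bound to feed into the $\liminf$.

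The fix is precisely the idea you bury in your ``anticipated obstacles'' paragraph, and it is exactly what the paper does. Because $f$ has discrete fibres, $\dim\A=\dim\B$, and $\B$ is locally irreducible, Theorem \ref{open-map-thm} makes $f$ open. Hence $f(\A\cap B(0,\delta))$ is open in $\B$ and contained in $f(E)$, so one can find (in the Sadullaev coordinates for $\B$ from Example \ref{nplp_on_A}) a set $B_r\subset f(E)$. Monotonicity now goes the right way:
\[
V_{f(E),\B}(w)\ \le\ V_{B_r}(w)\ =\ \log^+\frac{\|w_y\|}{r}\ \le\ \log^+\frac{\|w\|}{r}.
\]
With this in hand your chain becomes
\[
\log^+\frac{\|z\|}{\delta}\ \le\ V_{E,\A}(z)\ \le\ \tfrac1d\,V_{f(E),\B}(f(z))\ \le\ \tfrac1d\,\log^+\frac{\|f(z)\|}{r},
\]
and the rest of your argument (the $\liminf$ and the appeal to Theorem \ref{Ch_Kr}) goes through verbatim. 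So the only correction needed is to swap the direction in which you produce the comparison set for $f(E)$: you need something \emph{inside} $f(E)$, and openness of $f$ is what supplies it.
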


\begin{proof} We assume that $a=0\in \A$. The mapping $f$ is open (see Theorem \ref{open-map-thm}), and so we can find $r>0$ such that $B_r\subset f(\A \cap {B}(0,\delta)) \subset f(E)$, where $B_r$ is as in Example \ref{nplp_on_A}. Thus for $z\in\A$, we have
	\[ \log^+ \tfrac{\|z\|}{\delta} \le V_{\A\cap\bar{B}(0,\delta)}(z) = V_{E,\A}(z) \le \tfrac1d V_{f(E),\B} (f(z)) \le \tfrac1d V_{B_r}(f(z)) \le \tfrac1d \log^+ \tfrac{\|f(z)\|}{r}.  \] 
	Consequently, for $z\in \A$ far enough from zero, we get \ $ \tfrac{\|z\|^d}{\delta^d} \le \tfrac{\|f(z)\|}{r} $ \ 
	and this implies that $f$ is proper. 
\end{proof}

\vskip 3mm

\subsection{Estimate of $V_{K} \circ f$ from above} We will continue to consider an irreducible algebraic set $\A\subset\CC^N$ of positive dimension. By Proposition \ref{equiv_polyn_map}, a holomorphic mapping $f\!:\A\rightarrow \CC^M$ is a polynomial map on $\A$ if and only if the growth exponent $\rfA$ is finite.

\begin{thm} \label{VK_z_gory}
	Let $\A\!\subset\! \CC^{N}$ be an irreducible algebraic set of positive dimension, $f\!:\A\rightarrow \CC^M$ \ be a proper polynomial mapping and $\B:=f(\A)$.  Then, for every compact set $K\subset \B$, 
	\be \label{nier_VK_z_gory}
	 V_{K,\B}\circ f \ \le \ \ \rfA \ V_{f^{-1}(K),\A} \ \textit{  on } \A.
	\ee
\end{thm}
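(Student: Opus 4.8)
The plan is to argue directly from the definition of $V_{f^{-1}(K),\A}$ as a supremum: for every function $u$ competing in the supremum that defines $V_{K,\B}$, a suitably rescaled copy of $u\circ f$ will compete in the supremum that defines $V_{f^{-1}(K),\A}$. To set this up, note first that $f^{-1}(K)$ is compact because $f$ is proper, that $\B=f(\A)$ is an algebraic set of pure dimension $\dim\A$ (the Proper Mapping Theorem \ref{proper-map-thm} together with the remark in Subsection \ref{6.1} that a proper polynomial map sends an algebraic set to an algebraic set), so that $\mathcal{L}(\B)$ and $V_{K,\B}$ are meaningful, and that $\rfA<\infty$ since $f$ is polynomial on $\A$. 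Then I would fix a real number $\ro$ with $\ro>\rfA$; by the description of the growth exponent recalled in Subsection \ref{6.1} there is a constant $C>0$ with $\|f(z)\|\le C(1+\|z\|)^\ro$ for all $z\in\A$.

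Next, for a fixed $u\in\mathcal{L}(\B)$ with $u_{|_K}\le0$, I would set $v:=\tfrac1\ro\,u\circ f$ and verify three things. (1)~$v\in PSH(\A)$: since $u\in PSH(\B)$ and $f:\A\to\B$ is holomorphic between pure-dimensional analytic sets, Proposition \ref{a,b} gives $u\circ f\in PSH(\A)$, and $PSH(\A)$ is stable under multiplication by positive constants. (2)~$v$ has logarithmic growth on $\A$: from $u(\zeta)\le\log(1+\|\zeta\|)+c(u)$ on $\B$ and $\|f(z)\|\le C(1+\|z\|)^\ro$ one gets $u(f(z))\le\ro\log(1+\|z\|)+c'$ on $\A$, hence $v\in\mathcal{L}(\A)$. (3)~$v\le0$ on $f^{-1}(K)$, because $u\le0$ on $K$. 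Thus $v$ is admissible for $V_{f^{-1}(K),\A}$, so $v\le V_{f^{-1}(K),\A}$ on $\A$; taking the supremum over all such $u$ gives $\tfrac1\ro\,V_{K,\B}\circ f\le V_{f^{-1}(K),\A}$ on $\A$, and letting $\ro$ decrease to $\rfA$ (using $V_{f^{-1}(K),\A}\ge0$) yields the claimed inequality.

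There is no single delicate step here — the argument is the analytic-set analogue of the upper estimate in Theorem \ref{klasyka} — but the point that must not be missed is that one should resist passing to a polynomial extension $F:\CC^N\to\CC^M$ of $f$ and working in the ambient spaces: that route would only produce the bound with $\deg F$ in place of $\rfA$, which can be strictly larger (cf.\ the examples in Subsection \ref{6.1}). Keeping everything intrinsic to $\A$ and $\B$ is essential: plurisubharmonicity of $u\circ f$ on $\A$ is supplied by Proposition \ref{a,b} with no reference to $\CC^N$, and it is precisely the intrinsic growth exponent $\rfA$, rather than the degree of an extension, that controls the growth of $u\circ f$ on $\A$.
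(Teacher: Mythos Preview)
Your proof is correct and follows essentially the same route as the paper: fix $\ro>\rfA$, take $u\in\mathcal{L}(\B)$ with $u\le 0$ on $K$, use Proposition~\ref{a,b} and the growth bound $\|f(z)\|\le C(1+\|z\|)^{\ro}$ to show $\tfrac{1}{\ro}\,u\circ f\in\mathcal{L}(\A)$ with $\tfrac{1}{\ro}\,u\circ f\le 0$ on $f^{-1}(K)$, then take the supremum over $u$ and let $\ro\searrow\rfA$. Your added remark about keeping the argument intrinsic to $\A$ (so as to obtain $\rfA$ rather than the possibly larger $\deg F$ for a polynomial extension $F$) is a worthwhile observation not made explicit in the paper.
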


\begin{proof}
Fix $d>\rfA$ and $u\in \mathcal{L}(\A)$ such that $u_{|_K}\le 0$. Applying Proposition \ref{a,b}, we get $u\circ f\in PSH(\A)$. By the definition of the growth exponent, we can find a positive constant $C$ such that $\|f(z)\| \le C + \|z\|^d$ for all points $z\in \A$. Since $u\in \mathcal{L}(\A)$, there exists a constant $c$ depending only on $u$ such that, for $z\in\A$, we have 
\begin{align*}
 \tfrac1d (u\circ f) (z) &\le \tfrac1d [c+\log(1+\|f(z)\|)] \le \tfrac{c}d   +\tfrac1d \log(1+C+\|z\|^d)  \\ 
&< \tfrac{c}d   +\tfrac1d \log(1+C) + \tfrac1d \log(1+ \|z\|^d) \le \tilde{c} + \log (1+\|z\|),
\end{align*}
where $\tilde{c}=\tilde{c}(u,f)$. Consequently, $\tfrac1d (u\circ f)\in\mathcal{L}(\A)$. Moreover, 
\[ \tfrac1d (u\circ f)_{|_{f^{-1}(K)}} = \tfrac1d\: u_{|_K}\le 0,\] and this yields the inequality
\[ \tfrac1d (u\circ f) \le V_{f^{-1}(K),\A}  \ \textrm{ on } \A.\]
Now we get inequality (\ref{nier_VK_z_gory}) by taking the supremum over all $u$ and letting $d$ tend to $\rfA$. 
\end{proof}

\vskip 2mm

The converse is true in the following sense.

\begin{proposition} \label{converse_VK_z_gory}
	Let $\A\!\subset\! \CC^{N}$ be an irreducible algebraic set of positive dimension and let $f\!:\A\rightarrow \CC^M$ \ be a proper holomorphic mapping. Let $\B:=f(\A)$, 
	$b\in\B$, $\delta>0$ and $K:=\B \cap \bar{B}(b,\delta)$. If, for some $d>0$,  the inequality 
	\be \label{VK_z_gory_d}  V_{K,\B}\circ f \ \le \ \ d \ \: V_{f^{-1}(K),\A} \ \textit{ holds  on } \A,
	\ee
	then $f$ can be extended to a polynomial mapping on $\CC^N$ and $\rfA\le d$. \ In consequence, $\B$ is an irreducible algebraic set and
	\[ \rfA \!=\! \min\{d\!>\!0 \: : \: \mbox{inequality (\ref{VK_z_gory_d}) holds with} \ K\!=\!\B\cap \bar{B}(b,\delta) \ \mbox{for some} \ b\!\in\! \B, \ \delta\!>\!0\} \] 
	\[ = \min\{d\!>\!0 \: : \: \mbox{inequality (\ref{VK_z_gory_d}) holds for all compacts} \ K\subset\B\}. \] 
\end{proposition}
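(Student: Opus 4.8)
The plan is to follow the template of the proof of Proposition~\ref{converse_VK_z_dolu}, but now to squeeze out an \emph{upper} bound for $\|f(z)\|$ in terms of $\|z\|$, which bounds $\rfA$ from above. The two inputs are both instances of Example~\ref{nplp_on_A}. First, since $K=\B\cap\bar{B}(b,\delta)\subset\bar{B}(b,\delta)$, monotonicity of the extremal function in the compact set together with Theorem~\ref{VKV=VK} gives, for $w\in\B$,
\[
V_{K,\B}(w)=V_K(w)\ \ge\ V_{\bar{B}(b,\delta)}(w)=\log^+\tfrac{\|w-b\|}{\delta}.
\]
Second, pick $a_0\in\A$ with $f(a_0)=b$. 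By Theorem~\ref{proper-map-thm} the image $\B=f(\A)$ is an analytic set of positive pure dimension, so $\B\cap B(b,\delta)$ is a nonempty open subset of $\B$, and hence $f^{-1}\big(\B\cap B(b,\delta)\big)$ is an open neighbourhood of $a_0$ in $\A$; thus $\A\cap\bar{B}(a_0,\rho)\subset f^{-1}(K)$ for some $\rho>0$. Applying the construction of Example~\ref{nplp_on_A} at the base point $a_0$, together with Theorem~\ref{VKV=VK} and monotonicity, there is $r>0$ with
\[
V_{f^{-1}(K),\A}(z)=V_{f^{-1}(K)}(z)\ \le\ V_{\A\cap\bar{B}(a_0,\rho)}(z)\ \le\ \log^+\tfrac{\|z-a_0\|}{r},\qquad z\in\A.
\]

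Combining these with the hypothesis~(\ref{VK_z_gory_d}) gives, for every $z\in\A$, the inequality $\log^+\frac{\|f(z)-b\|}{\delta}\le d\,\log^+\frac{\|z-a_0\|}{r}$; exponentiating yields $\|f(z)-b\|\le\delta\max\big(1,(\|z-a_0\|/r)^{d}\big)$, hence $\|f(z)\|\le C(1+\|z\|)^{d}$ on $\A$ for a suitable constant $C$. By the description of the growth exponent in Subsection~\ref{6.1} this gives $\rfA\le d<\infty$, and then Proposition~\ref{equiv_polyn_map} shows that $f$ is a polynomial mapping on $\A$, i.e.\ it extends to a polynomial mapping $\CC^N\to\CC^M$. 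Being moreover proper, $f$ has an algebraic image (the remark at the beginning of Subsection~\ref{6.1}), which by Theorem~\ref{proper-map-thm} is irreducible of the same dimension as $\A$; so $\B$ is an irreducible algebraic set.

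For the two $\min$-identities: once $f$ is known to be a proper polynomial mapping, Theorem~\ref{VK_z_gory} tells us that $d=\rfA$ makes~(\ref{VK_z_gory_d}) hold for \emph{every} compact $K\subset\B$, in particular for all sets $\B\cap\bar{B}(b,\delta)$, so $\rfA$ belongs to both sets displayed on the right. Conversely, the set of $d>0$ for which~(\ref{VK_z_gory_d}) holds for all compacts is contained in the set of $d>0$ for which it holds for some $\B\cap\bar{B}(b,\delta)$, and the first part of the proposition shows the latter set lies in $[\rfA,\infty)$. Hence both sets have minimum exactly $\rfA$.

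The only step that is not completely mechanical is producing the ball $\A\cap\bar{B}(a_0,\rho)$ inside $f^{-1}(K)$: this is where the non-pluripolarity supplied by Example~\ref{nplp_on_A} becomes available for the preimage, and it uses that $K$ has nonempty interior in $\B$ and that $f$ maps $\A$ continuously onto $\B=f(\A)$. Everything else is the same bookkeeping with constants as in the proof of Proposition~\ref{converse_VK_z_dolu}.
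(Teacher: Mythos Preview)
Your proof is correct and follows essentially the same approach as the paper: bound $V_{K,\B}\circ f$ from below by $\log^+\tfrac{\|f(z)-b\|}{\delta}$, find a small ball in $\A$ inside $f^{-1}(K)$ and use Example~\ref{nplp_on_A} to bound $V_{f^{-1}(K),\A}$ from above, then read off $\rfA\le d$ and invoke Proposition~\ref{equiv_polyn_map}. You even supply more detail than the paper on the algebraicity of $\B$ and on the two $\min$-identities (which the paper leaves implicit after establishing $\rfA\le d$).
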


\begin{proof} Taking into account Proposition \ref{equiv_polyn_map}, it is sufficient to prove the estimate $\rfA\le d$. \  Let $E:=f^{-1}(K)$. Observe that $E$ is compact in $\A$ and $f^{-1}(\B\cap B(b,\delta))$ is open in $\B$. We can find $a\in \A, \ \rho>0$ such that $\A \cap \bar{B}(a,\rho) \subset f^{-1}(\B\cap B(b,\delta))\subset f^{-1}(K)$. Without losing any generality, we can assume that $a=0$. As in Example \ref{nplp_on_A}, we have \ $ V_{\A \cap \bar{B}(0,\rho),\A}(z) \le \log^+ \tfrac{\|z\|}{r} $ \ for some $r>0$ independent of $z\in \A$. Observe that 
	\[ \log^+ \frac{\|f(z)-b\|}\delta = \left( V_{\bar{B}(b,\delta)} \circ f \right) (z) \le \left( V_{K} \circ f \right) (z) \le d \ \: V_{f^{-1}(K),\A} (z)  \le d \ \log^+ \tfrac{\|z\|}{r}.\]
		Consequently,  since $f$ is proper, $z\rightarrow \infty$ implies $f(z)\rightarrow \infty$ and thus \ $ \|f(z)\| \le \|b\| + \delta \tfrac{\|z\|^d}{r^d} $ \ 
	for \ $z\in \A$ \ far enough from zero. Therefore,
	\[ \rfA = \limsup_{\A\ni z\rightarrow \infty} \frac{\log \|f(z)\|}{\log \|z\|} \le 
	\limsup_{\A\ni z\rightarrow \infty} \frac{\log \left( \|b\| + \delta  \tfrac{\|z\|^d}{r^d}\right)}{\log \|z\|} = d,\]
	and this completes the proof.
\end{proof}

\begin{proposition} \label{converse_VE_z_dolu}
	Let $\A\!\subset\! \CC^{N}$ be an irreducible algebraic set of positive dimension and let $f\!:\A\rightarrow \CC^M$ \ be a proper holomorphic mapping. Let $\B:=f(\A)$, 
	$a\in\A$, $\delta>0$ and $E:=\A \cap \bar{B}(b,\delta)$. If, for some $d>0$,  the inequality 
	\be \label{VE_z_dolu_d}  V_{f(E),\B}\circ f \ \le \ \ d \ \: V_{E,\A} \ \textit{ holds  on } \A,
	\ee
	then $f$ can be extended to a polynomial mapping on $\CC^N$ and $\rfA\le d$. \ Moreover,
	\[ \rfA \!=\! \min\{d\!>\!0 \: : \: \mbox{inequality (\ref{VE_z_dolu_d}) holds with} \ E\!=\!\A\cap \bar{B}(a,\delta) \ \mbox{for some} \ a\!\in\! \A, \ \delta\!>\!0\} \] 
	\[ = \min\{d\!>\!0 \: : \: \mbox{inequality (\ref{VE_z_dolu_d}) holds for all compacts} \ E\subset\A\}. \]
	In particular, for any compact $E\subset \A$, the inequality \ $ V_{f(E),\B}\circ f \le \rfA \: V_{E,\A}$ \ holds on $\A$. 
\end{proposition}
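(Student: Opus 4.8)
The argument will run closely parallel to the proofs of Propositions~\ref{converse_VK_z_dolu} and~\ref{converse_VK_z_gory}. After a translation of $\A$ we may assume $a=0\in\A$, so that $E=\A\cap\bar{B}(0,\delta)$. The plan is to squeeze~(\ref{VE_z_dolu_d}) between an explicit lower bound for its left-hand side and an explicit upper bound for its right-hand side, and then to read off $\rfA\le d$ from the definition of the growth exponent.

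First I would record the two elementary estimates. For the upper bound on the right, since $\A$ is an irreducible algebraic set, the computation carried out in Example~\ref{nplp_on_A} supplies an $r>0$ with $V_{E,\A}(z)\le\log^{+}(\|z\|/r)$ for all $z\in\A$. For the lower bound on the left, I first invoke the Proper Mapping Theorem (Theorem~\ref{proper-map-thm}): as $\A$ is irreducible, hence pure-dimensional, $\B=f(\A)$ is an analytic subset of $\CC^M$ of positive pure dimension. The compact set $f(E)\subset\B$ is contained in some Euclidean ball $\bar{B}(0,\rho_0)$ of $\CC^M$, so by the monotonicity of pluricomplex Green functions together with Theorem~\ref{VKV=VK} applied on $\B$,
\[
V_{f(E),\B}(w)=V_{f(E)}(w)\ \ge\ V_{\bar{B}(0,\rho_0)}(w)=\log^{+}\frac{\|w\|}{\rho_0},\qquad w\in\B.
\]
Substituting $w=f(z)$ and combining with~(\ref{VE_z_dolu_d}) and the bound on $V_{E,\A}$ gives $\log^{+}(\|f(z)\|/\rho_0)\le d\,\log^{+}(\|z\|/r)$ on $\A$. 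Since $f$ is proper, preimages of Euclidean balls are compact, so $z\to\infty$ along $\A$ forces $\|f(z)\|\to\infty$; hence for $z\in\A$ far enough from the origin one has $\|z\|>r$ and $\|f(z)\|>\rho_0$, the last inequality reads $\log\|f(z)\|\le d\log\|z\|+\log\rho_0-d\log r$, and passing to $\limsup_{\A\ni z\to\infty}$ yields $\rfA\le d<\infty$. By Proposition~\ref{equiv_polyn_map}, $f$ is then a polynomial mapping on $\A$, i.e.\ it extends to a polynomial mapping of $\CC^N$; being moreover proper, $\B$ is an irreducible algebraic set.

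For the two $\min$ formulas, one half is already in hand: by the previous paragraph, every $d>0$ for which~(\ref{VE_z_dolu_d}) holds — whether for a single ball $\A\cap\bar{B}(a,\delta)$ or for all compact subsets of $\A$ — satisfies $d\ge\rfA$. For the reverse inequality, $f$ is by now known to be a proper polynomial mapping on the irreducible algebraic set $\A$, so Theorem~\ref{VK_z_gory} applies with $K:=f(E)$; since $f^{-1}(f(E))\supseteq E$ we have $V_{f^{-1}(f(E)),\A}\le V_{E,\A}$, hence
\[
V_{f(E),\B}\circ f\ \le\ \rfA\,V_{f^{-1}(f(E)),\A}\ \le\ \rfA\,V_{E,\A}\qquad\text{on }\A.
\]
This is precisely the closing ``in particular'' assertion, and it shows that $d=\rfA$ makes~(\ref{VE_z_dolu_d}) valid for every compact $E\subset\A$, and \emph{a fortiori} for every ball $\A\cap\bar{B}(a,\delta)$. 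Together with the lower bound $d\ge\rfA$ this gives that both minima equal $\rfA$.

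The step requiring the most care is the lower bound for $V_{f(E),\B}\circ f$. In contrast to Proposition~\ref{converse_VK_z_gory}, the distinguished compact set now lives in the target $\B$, which at that point of the argument is known only to be analytic and pure-dimensional, not yet algebraic, so the estimate of Example~\ref{nplp_on_A} is not available on $\B$. The remedy is to bound $V_{f(E),\B}$ from below by the pluricomplex Green function of an \emph{ambient} Euclidean ball in $\CC^M$, which requires only Theorem~\ref{VKV=VK} on $\B$ and the monotonicity of Green functions; everything else is routine bookkeeping with the growth exponent and the properness of $f$.
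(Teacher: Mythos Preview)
Your proposal is correct and follows essentially the same route as the paper: bound $V_{f(E),\B}\circ f$ from below via the ambient Green function of a Euclidean ball containing $f(E)$, bound $V_{E,\A}$ from above by $\log^{+}(\|z\|/r)$ as in Example~\ref{nplp_on_A}, read off $\rfA\le d$, and then deduce the $\min$ formulas and the closing inequality from Theorem~\ref{VK_z_gory} with $K=f(E)$ together with $E\subset f^{-1}(f(E))$. The paper's write-up is slightly leaner---it does not translate $a$ to $0$, and it uses only the trivial inequality $V_{f(E)}\le V_{f(E),\B}$ rather than the equality from Theorem~\ref{VKV=VK}, so it never needs to invoke that $\B$ is pure-dimensional at that point; it also dispenses with the properness step (the inequality $\log^{+}(\|f(z)\|/R)\le d\log^{+}(\|z-a\|/r)$ already yields $\|f(z)\|\le R\|z-a\|^d/r^d$ for large $\|z\|$ without knowing that $\|f(z)\|\to\infty$)---but these are cosmetic differences, not substantive ones.
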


\begin{proof} 
Since $f(E)$ is compact, we take $R>0$ such that $f(E)\subset B(0,R)$. This yields, for all $z\in \A$ and some $r>0$, the following estimates
\[ \log^+ \frac{\|f(z)\|}R \le \left( V_{f(E)} \circ f \right)\!\! (z) \le  \left( V_{f(E),\B} \circ f \right)\!\! (z) \le d \ V_{E,\A} (z) \le d \, \log^+ \tfrac{\|z-a\|}{r}.\] 
Consequently, \ $\|f(z)\| \le R\tfrac{\|z-a\|^d}{r^d}$ \ 
	for \ $z\in \A$ \ far enough from zero and \ 
	$ \rfA \le d$,  
	which proves that $f$ is a polynomial mapping on $\A$ (see Proposition \ref{equiv_polyn_map}). 
	
	Now, observe that by Theorem \ref{VK_z_gory}, inequality (\ref{VE_z_dolu_d}) holds for all compact sets $E\subset \A$ because we can take $K=f(E)$ that is a compact set in $\B$ and $E\subset f^{-1}(f(E))$. Hence, inequality (\ref{VE_z_dolu_d}) is satisfied with $d=\rfA$.
\end{proof}

\vskip 3mm

\subsection{Main theorems} 
Two first results given below are consequences of the theorems and propositions proved in the previous subsections. They generalize Theorem \ref{klasyka} and give additional properties of proper polynomial mappings on algebraic sets.

\begin{thm}
	Let $\A\subset\CC^N$ be an irreducible algebraic set of positive dimension  and let \linebreak  $f\!:\A\rightarrow \CC^M$ \ be a non-constant holomorphic mapping on $\A$. Then the following conditions are equivalent:
	\begin{itemize}
		\item[{\it (i)}] $f$ is a proper polynomial mapping on $\A$,
		\item[{\it (ii)}] 
		$\begin{cases}
			\rfA<\infty \\
			\lfA >0,
		\end{cases}$
	\end{itemize}
	Assume additionally that $f$ is proper and  $\B:=f(\A)$ is locally irreducible. Then $f$ is a polynomial map on $\A$ if and only if any of the following conditions holds:  
	\begin{itemize}
		\item[{\it (iii)}]
		\mbox{for all compact sets} \ $K\subset \B$ 
		\[\lfA \ V_{f^{-1}(K),\A} \ \le \ V_{K,\B}\circ f \ \le \ \ \rfA \ V_{f^{-1}(K),\A} \ \ on \ \A, \]
		\item[{\it (iv)}]
		$\mbox{for some} \ d>0 \ \mbox{and for all compact sets} \ K\subset\B$ \
		\[V_{K,\B}\circ f \ \le \ \ d \ V_{f^{-1}(K)),\A} \ \ on \ \A,	\]
		\item[{\it (v)}]
		$\mbox{for some} \ d>0 \ \mbox{and for all compact sets} \ E\subset\A$ \
		\[V_{f(E),\B}\circ f \ \le \ \ d \ V_{E,\A} \ \ on \ \A,	\]
		\item[{\it (vi)}]
		$\mbox{for some} \ d>0, \ b\in \B, \ \delta>0$ \
		\[ V_{\B\cap\bar{B}(b,\delta),\B}\circ f \ \le \ \ d \ V_{f^{-1}(\B\cap\bar{B}(b,\delta)),\A} \ \ on \ \A, \] 
		\item[{\it (vii)}]
		$ \mbox{for some} \ d>0, \ a\in \A, \ \delta>0$ \
		\[ V_{f(\A\cap\bar{B}(a,\delta)),\B}\circ f \ \le \ \ d \ V_{\A\cap\bar{B}(a,\delta),\A} \ \ on \ \A.	\] 
	\end{itemize}
\end{thm}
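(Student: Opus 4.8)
The plan is to obtain the entire statement by assembling results already established in Section~6 and in the earlier subsections of Section~7, with no genuinely new estimate needed. Throughout one uses that an irreducible algebraic set has the Liouville property, so the non-constant mapping $f$ is automatically unbounded and every growth descriptor of Section~6 is well defined.

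\textbf{The equivalence (i)$\Leftrightarrow$(ii).} I would split (ii) into its two clauses. By Proposition~\ref{equiv_polyn_map}, the clause $\rfA<\infty$ is equivalent to ``$f$ is a polynomial mapping on $\A$''; and, once $f$ is known to be polynomial, Theorem~\ref{Ch_Kr} says that the clause $\lfA>0$ is equivalent to properness of $f$. Chaining these two equivalences turns (ii) into ``$f$ is a proper polynomial mapping on $\A$'', i.e. into (i); this is already the promised generalisation of Theorem~\ref{klasyka}.

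\textbf{From $f$ polynomial to each of (iii)--(vii).} Adopt now the extra hypotheses of the second half, namely that $f$ is proper and $\B:=f(\A)$ is locally irreducible, and suppose $f$ is polynomial on $\A$; then $\B$ is irreducible algebraic of the same dimension as $\A$ (Theorem~\ref{proper-map-thm}) and $\rfA<\infty$. The right-hand inequality in (iii) is precisely Theorem~\ref{VK_z_gory}, while its left-hand inequality is the short form (\ref{nier_VK_z_dolu_krotko}) of Theorem~\ref{VK_z_dolu} once $V_{f^{-1}(K),\A}$ and $V_{K,\B}$ are identified with $V_{f^{-1}(K)}$ on $\A$ and $V_K$ on $\B$ respectively (Theorem~\ref{VKV=VK}); so $f$ polynomial $\Rightarrow$ (iii). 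Taking $d:=\rfA$ (which is positive, $f$ being non-constant) in (iii) gives (iv), and restricting $K$ in (iv) to closed balls $\B\cap\bar{B}(b,\delta)$ gives (vi). For (v) I would apply Theorem~\ref{VK_z_gory} with $K=f(E)$ and use the inclusion $E\subseteq f^{-1}(f(E))$, obtaining $V_{f(E),\B}\circ f\le\rfA\,V_{f^{-1}(f(E)),\A}\le\rfA\,V_{E,\A}$ on $\A$; restricting $E$ to closed balls then yields (vii). Hence $f$ polynomial implies all of (iii)--(vii).

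\textbf{From any of (iii)--(vii) back to $f$ polynomial, and the main point.} Condition (vi) is the instance $K=\B\cap\bar{B}(b,\delta)$ of (iv), and Proposition~\ref{converse_VK_z_gory} forces $f$ then to extend to a polynomial mapping on $\CC^N$; together with the previous paragraph this closes the cycle ``$f$ polynomial $\Leftrightarrow$ (iii) $\Leftrightarrow$ (iv) $\Leftrightarrow$ (vi)''. Symmetrically, (vii) is the instance $E=\A\cap\bar{B}(a,\delta)$ of (v), and Proposition~\ref{converse_VE_z_dolu} forces $f$ to be polynomial, closing the cycle ``$f$ polynomial $\Leftrightarrow$ (v) $\Leftrightarrow$ (vii)''. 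All the asserted equivalences follow. The proof carries no remaining analytic difficulty — the substance sits in Theorems~\ref{VK_z_dolu}, \ref{VK_z_gory} and Propositions~\ref{converse_VK_z_gory}, \ref{converse_VE_z_dolu} — so the only thing demanding care is the bookkeeping of hypotheses: one must make sure $\B$ is pure-dimensional and, where needed, irreducible algebraic, so that the order $\rfb$ of $f$ and the identification $\lfA=1/\rfb$ (Corollary~\ref{rfV=lfV}) are legitimate, and one should read the symbol $\rfA$ occurring in (iii) and (iv) as a finite number, which is either part of the meaning of those conditions or is recovered a posteriori from (vi) through Proposition~\ref{converse_VK_z_gory}.
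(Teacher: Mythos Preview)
Your proof is correct and follows the same strategy as the paper: (i)$\Leftrightarrow$(ii) via Proposition~\ref{equiv_polyn_map} and Theorem~\ref{Ch_Kr}; the forward implications (polynomial $\Rightarrow$ (iii)--(vii)) via Theorems~\ref{VK_z_dolu} and~\ref{VK_z_gory}; and the converse via the ``ball'' propositions of Section~7. You are in fact slightly more thorough than the paper's own argument, which only cites Proposition~\ref{converse_VE_z_dolu} to close the loop through (vii) and leaves implicit the separate use of Proposition~\ref{converse_VK_z_gory} needed to return from (vi), as well as the trivial chains (iii)$\Rightarrow$(iv)$\Rightarrow$(vi) and (iv)$\Rightarrow$(v)$\Rightarrow$(vii); your explicit remark that $\rfA$ in (iii) must be read as finite is also a point the paper glosses over.
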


\begin{proof}
	The equivalence of the first two conditions is a consequence of Proposition \ref{equiv_polyn_map}, Theorem \ref{Ch_Kr} and Corollary \ref{str_14}.  By Theorems \ref{VK_z_dolu} and \ref{VK_z_gory}, conditions $(iii), \ (iv), \ (v), \ (vi), \ (vii)$ are a consequence of $(i)$. Theorem \ref{converse_VE_z_dolu} yields $(vii)$, which implies that $f$ is a polynomial mapping on $\A$. 
\end{proof}

\begin{thm} \label{VK_z_gory_z_dolu}
		Let $\A\!\subset\! \CC^{N}$ be an irreducible algebraic set of positive dimension and let \linebreak  $f\!:\A\rightarrow \CC^M$ \ be a proper polynomial mapping. Assume that $\B:=f(\A)$
		is locally irreducible. Then, for every compact set $K\subset \B$, 
		\be \label{nier_VK_z_gory_z_dolu}
		\lfA \ V_{f^{-1}(K),\A} \ \le \ V_{K,\B}\circ f \ \le \ \ \rfA \ V_{f^{-1}(K),\A} \ \ on \ \A.
		\ee
Moreover, the following conditions are equivalent:
		\begin{itemize}
			\item[{\it (i)}] \
			for some \ $d>0$ \ and for all compact sets \ $K\subset \B$ 
			\[ V_{K,\B}\circ f \ = \ \ d \: \ V_{f^{-1}(K),\A} \ \ on \ \A, \]
			\item[{\it (ii)}] \
			for some \ $d>0, \ b\in \B, \ \delta>0$  
			\[ V_{\B\cap\bar{B}(b,\delta)}\circ f \ = \ \ d \: \ V_{f^{-1}(\B\cap\bar{B}(b,\delta))} \ \ on \ \A, \]
			\item[{\it (iii)}] \ $ \lfA \ = \ \rfA ,	$ \vskip 2mm
			\item[{\it (iv)}] \ the limit \ $\lim\limits_{\A\ni z\rightarrow \infty} \frac{\log \|f(z)\|}{\log \|z\|}$ \ exists.
		\end{itemize}
\end{thm}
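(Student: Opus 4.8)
The plan is first to read the double inequality (\ref{nier_VK_z_gory_z_dolu}) off the estimates already established, and then to prove the equivalences by showing $(iii)\Leftrightarrow(iv)$ directly and closing the cycle $(iii)\Rightarrow(i)\Rightarrow(ii)\Rightarrow(iii)$. For any compact $K\subset\B$, the lower bound $\lfA\,V_{f^{-1}(K),\A}\le V_{K,\B}\circ f$ and the upper bound $V_{K,\B}\circ f\le\rfA\,V_{f^{-1}(K),\A}$ are Theorems \ref{VK_z_dolu} and \ref{VK_z_gory} respectively (the former requires $\B$ to be locally irreducible), together with Theorem \ref{VKV=VK} to identify the intrinsic Green functions $V_{\cdot,\A}$, $V_{\cdot,\B}$ with their ambient counterparts where convenient. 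Throughout I would keep in mind that $0<\lfA\le\rfA<\infty$: positivity of $\lfA$ is Theorem \ref{Ch_Kr} applied to the proper polynomial map $f$, finiteness of $\rfA$ is Proposition \ref{equiv_polyn_map}, and $\lfA\le\rfA$ is (\ref{lfV<rf}).

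The equivalence $(iii)\Leftrightarrow(iv)$ is immediate: by (\ref{wzor_lfV}) we have $\lfA=\liminf_{\A\ni z\to\infty}\frac{\log\|f(z)\|}{\log\|z\|}$, and by definition $\rfA=\limsup_{\A\ni z\to\infty}\frac{\log\|f(z)\|}{\log\|z\|}$, so the limit in (iv) exists exactly when these coincide. For $(iii)\Rightarrow(i)$, put $d:=\lfA=\rfA$, a positive finite number; then the two inequalities in (\ref{nier_VK_z_gory_z_dolu}) force $V_{K,\B}\circ f=d\,V_{f^{-1}(K),\A}$ on $\A$ for every compact $K\subset\B$. For $(i)\Rightarrow(ii)$, choose any $b\in\B$, $\delta>0$, set $K:=\B\cap\bar{B}(b,\delta)$ (compact, with $f^{-1}(K)$ compact since $f$ is proper), apply (i) to this $K$, and rewrite $V_{K,\B}\circ f$ as $V_K\circ f$ and $V_{f^{-1}(K),\A}$ as $V_{f^{-1}(K)}$ on $\A$ via Theorem \ref{VKV=VK}; this is exactly (ii).

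The substantive step is $(ii)\Rightarrow(iii)$. Writing $K:=\B\cap\bar{B}(b,\delta)$ and using Theorem \ref{VKV=VK} to pass between the intrinsic and ambient Green functions, the hypothesis of (ii) becomes $V_{K,\B}\circ f=d\,V_{f^{-1}(K),\A}$ on $\A$. Reading this as $d\,V_{f^{-1}(K),\A}\le V_{K,\B}\circ f$ and invoking Proposition \ref{converse_VK_z_dolu} gives $\lfA\ge d$; reading it as $V_{K,\B}\circ f\le d\,V_{f^{-1}(K),\A}$ and invoking Proposition \ref{converse_VK_z_gory} gives $\rfA\le d$. Combined with $\lfA\le\rfA$ this forces $\lfA=\rfA=d$, which is (iii). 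I do not expect a deep obstacle here, since the two propositions already do the work; the care needed is to verify that their hypotheses hold verbatim in the present setting — for Proposition \ref{converse_VK_z_dolu} that $\B=f(\A)$ is an irreducible algebraic set (by the Proper Mapping Theorem and irreducibility of $\A$), that $f$ is a polynomial map, and that $f^{-1}(K)$ is compact; for Proposition \ref{converse_VK_z_gory} that $f$ is a proper holomorphic map — and to keep the bookkeeping with $V_{\cdot,\A}$, $V_{\cdot,\B}$ and the Green functions on $\CC^N$ consistent via Theorem \ref{VKV=VK} throughout.
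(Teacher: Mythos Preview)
Your proposal is correct and follows essentially the same route as the paper: the double inequality is read off Theorems \ref{VK_z_dolu} and \ref{VK_z_gory}, the equivalence $(iii)\Leftrightarrow(iv)$ comes from the definitions via (\ref{wzor_lfV}), and the cycle $(iii)\Rightarrow(i)\Rightarrow(ii)\Rightarrow(iii)$ is closed exactly as you describe, with the last implication relying on Propositions \ref{converse_VK_z_dolu}, \ref{converse_VK_z_gory} and (\ref{lfV<rf}). Your additional bookkeeping --- checking $0<\lfA\le\rfA<\infty$, verifying that $\B$ is an irreducible algebraic set so that Proposition \ref{converse_VK_z_dolu} applies, and using Theorem \ref{VKV=VK} to pass between $V_{K,\B}$ and $V_K$ in condition (ii) --- is accurate and makes explicit what the paper leaves implicit.
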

	 
\begin{proof}
	The first part of the statement is a consequence of Theorems \ref{VK_z_dolu} and \ref{VK_z_gory}. The definition of the growth exponent and (\ref{wzor_lfV}) give the equivalence of $(iii)$ and $(iv)$. In view of (\ref{nier_VK_z_gory_z_dolu}), $(iii)$ implies $(i)$. It is obvious that $(i)\Rightarrow (ii)$. By Propositions \ref{converse_VK_z_dolu}, \ref{converse_VK_z_gory} and (\ref{lfV<rf}), we obtain $(iii)$ from $(ii)$.
\end{proof}	

We can easily observe that 
either of conditions $(i)$ and $(ii)$ implies \ $d=\lfA$. Moreover, if there exists the limit in $(iv)$, then it is positive, finite and equal to $\lfA$.

\vskip 2mm 

Theorem \ref{generalization} given below is very closely related to Theorem \ref{klasyka}. It is formulated in such a way as to show a direct generalization of the latter result to the case of holomorphic mappings on algebraic sets. Due to a recent result obtained by Białożyt, Denkowski and Tworzewski \cite{BDT}, condition (iii) below can be stated analogously as in Theorem \ref{klasyka}. We present the conclusion of their theorem adapted to our situation: 

\begin{thm} \label{bdt}
If $\A\subset\CC^N$ is an irreducible algebraic set of positive dimension, \linebreak  $f=(f_1,...,f_M)\!:\A\rightarrow \CC^M$ \ is a non-constant holomorphic mapping on $\A$ and $f(\A)$ is an analytic set, then 
\[ \lfA=\rfA \]
if and only if $f$ is a proper polynomial mapping on $\A$, 	$\ro\!\!(f_1\!,\mathcal{A})=...=\ro\!\!(f_M\!,\mathcal{A})$, and
\be \label{cone} \bigcap_{j=1}^M f_j^{-1}(0)^* =\{0\},  \ee
where $\mathcal{D}^*$ is the cone at infinity of an algebraic set $\mathcal{D}\subset \CC^N$ of positive dimension, defined by
\[ \mathcal{D}^*:=\{z\in \CC^N \: : \: \exists (a_n)_{n=1}^\infty \subset \mathcal{D}, \ (t_n)_{n=1}^\infty\subset \CC : \|a_n\|\rightarrow\infty, \ t_na_n\rightarrow z\}. \] 
\end{thm}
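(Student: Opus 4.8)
\emph{Proof proposal.} The plan is to obtain Theorem \ref{bdt} by transcribing the theorem of \cite{BDT} into the notation of the present paper, after using the results of Section 6 to reduce both implications of the equivalence to the case of a proper polynomial mapping. For the reduction, recall that $\lfA\le\rfA$ by (\ref{lfV<rf}) and that $\rfA=\max_j\ro(f_j,\A)$ whenever $f$ is a polynomial mapping, by (\ref{osz_rfA}). If $\lfA=\rfA$, this common value is finite: were it $+\infty$, then restricting $f$ to an unbounded algebraic curve $\Gamma\subset\A$ (one exists because $\A$ is irreducible of positive dimension) we would have $\mathcal{L}_\infty(f|\Gamma)\ge\lfA=+\infty$, while on the normalisation of $\Gamma$ the lift of $f$ is, near each point over a point at infinity of $\Gamma$, either meromorphic --- hence of polynomial growth in $\|z\|$, so $\mathcal{L}_\infty(f|\Gamma)<\infty$ --- or has an essential singularity, where Casorati--Weierstrass forces $\|f\|$ to stay bounded along some sequence; both possibilities contradict $\mathcal{L}_\infty(f|\Gamma)=+\infty$. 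So $\lfA=\rfA<\infty$, $f$ is polynomial by Proposition \ref{equiv_polyn_map}, and $\rfA=\max_j\ro(f_j,\A)>0$, since $\ro(f_j,\A)=0$ for all $j$ would --- by the characterisation $\ro(p,\A)=\min\{d\ge0:|p|\le C(1+\|z\|)^d\text{ on }\A\}$ of \cite{Strz} --- make each $f_j$, hence $f$, bounded and therefore constant. Theorem \ref{Ch_Kr} then yields that $f$ is proper. Conversely, a proper polynomial mapping $f$ automatically has $f(\A)$ algebraic, $\rfA=\max_j\ro(f_j,\A)<\infty$, and, by Corollary \ref{rfV=lfV}, $\lfA=1/\,\ro(f^{-1},f(\A))\in(0,\infty)$. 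Thus in both directions one is left to prove, for proper polynomial $f$, that $\lfA=\rfA$ is equivalent to the conjunction of $\ro(f_1,\A)=\dots=\ro(f_M,\A)$ and $\bigcap_{j=1}^M f_j^{-1}(0)^*=\{0\}$.

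This last equivalence is the substance of \cite{BDT} and its proof generalises that of condition (iii) of the classical Theorem \ref{klasyka}. Because $\|f\|^2=\sum_j|f_j|^2$, the lower growth of $\|f\|$ along a sequence tending to infinity is governed by the smallest of the lower growths of the coordinates, so one studies $\log\|f(z)\|/\log\|z\|$ direction by direction on the compact sphere $\A^*\cap\{\|z\|=1\}$ of limiting directions at infinity. For $\Leftarrow$: assuming all $\ro(f_j,\A)=d=\rfA$ and $\bigcap_j f_j^{-1}(0)^*=\{0\}$, a sequence $z_n\in\A$ with $\|z_n\|\to\infty$ and $\log\|f(z_n)\|/\log\|z_n\|\to\lfA<d$ gives, after extracting a subsequence, $z_n/\|z_n\|\to w$ with $\|w\|=1$ and $w\in\A^*$; the estimate $|f_j(z_n)|=o(\|z_n\|^{d-\varepsilon})$, valid for every $j$, is then converted --- by a Rouché or minimum-modulus estimate on the slices $\A\cap\overline{B}(z_n,\varepsilon\|z_n\|)$ --- into the existence of genuine zeros $\zeta_n^{(j)}\in f_j^{-1}(0)$ with $\|\zeta_n^{(j)}\|\to\infty$ and $\zeta_n^{(j)}/\|\zeta_n^{(j)}\|\to w$, so $w\in f_j^{-1}(0)^*$ for every $j$, contradicting $\bigcap_j f_j^{-1}(0)^*=\{0\}$; hence $\lfA\ge d$ and $\lfA=\rfA$. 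For $\Rightarrow$: if $\lfA=\rfA$, a minimising sequence produces a unit direction $w\in\A^*$ that must lie in $f_j^{-1}(0)^*$ for each coordinate whose growth exponent would otherwise be too large, and a bookkeeping of which coordinates are degenerate along $w$ forces both that all $\ro(f_j,\A)$ coincide and that $w\in\bigcap_j f_j^{-1}(0)^*$, so $w=0$, a contradiction; this gives the two conditions.

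The main obstacle, I expect, is the geometric lemma at the heart of $\Leftarrow$: passing from ``$|f_j|$ abnormally small along a sequence in $\A$ going to infinity in direction $w$'' to ``$w$ lies in the cone at infinity of $f_j^{-1}(0)$''. On $\CC^N$ this is elementary, since $\A^*=\CC^N$ and one simply restricts to the line $\CC w$ and uses the leading homogeneous parts $\hat f_j$; on a genuine algebraic set $\A^*$ is not contained in $\A$, so one must instead work in the projective closure $\overline{\A}\subset\PP^N$ with the homogenisations of polynomial extensions $F_j$ of $f_j$ along $\overline{\A}\cap H_\infty$, $H_\infty$ the hyperplane at infinity. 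The subtle point here, absent in $\CC^N$, is that $\ro(f_j,\A)$ may be strictly smaller than the degree of every polynomial extension of $f_j$, as in the examples of Section 6, so the top-degree form of $F_j$ can vanish identically on $\overline{\A}\cap H_\infty$ and one must descend to the first graded piece of $F_j$ that does not, identifying its zero locus on $\overline{\A}\cap H_\infty$ with the cone $f_j^{-1}(0)^*$. Once this correspondence is established, the compactness and continuity arguments on the sphere of directions and the bookkeeping of degenerate coordinates are routine, so in the paper I would present Theorem \ref{bdt} simply as a restatement of \cite{BDT}, checking only the dictionary entries $\lfA\leftrightarrow$ their Łojasiewicz exponent at infinity, $\rfA=\max_j\ro(f_j,\A)$, and $f_j^{-1}(0)^*\leftrightarrow$ the part of $\overline{\A}$ at infinity cut out by $f_j$.
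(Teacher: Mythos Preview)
The paper does not prove this theorem at all. It is introduced with the sentence ``Due to a recent result obtained by Bia\l o\.zyt, Denkowski and Tworzewski \cite{BDT}~\ldots\ We present the conclusion of their theorem adapted to our situation,'' and then stated without argument. So there is no proof in the paper to compare your proposal against; your final recommendation --- to present Theorem~\ref{bdt} as a restatement of \cite{BDT} and record only the dictionary entries --- is precisely what the paper does.

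Your reduction in the first paragraph (showing that $\lfA=\rfA$ forces this common value to lie in $(0,\infty)$, hence $f$ is a proper polynomial map) is sound and is the only preprocessing the paper implicitly relies on when invoking the result inside Theorem~\ref{generalization}. The curve-and-normalisation argument for finiteness is correct, if a little heavier than necessary.

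Your sketches of the two implications of the core equivalence, however, are too impressionistic to stand as a proof, and the $\Rightarrow$ direction is garbled: you phrase it as a contradiction argument that produces a nonzero $w\in\bigcap_j f_j^{-1}(0)^*$, but the hypothesis is $\lfA=\rfA$ and the goal is to \emph{establish} the two conditions, so there is nothing yet to contradict. The honest structure is the contrapositive: assume either that some $\ro(f_j,\A)<\rfA$ or that $\bigcap_j f_j^{-1}(0)^*\neq\{0\}$, and in each case exhibit a sequence witnessing $\lfA<\rfA$. Your ``bookkeeping of degenerate coordinates'' does not make this visible, and in the second case the difficulty you correctly flag --- that the cones $f_j^{-1}(0)^*$ are built from possibly \emph{different} sequences for different $j$, so a common direction $w$ does not automatically give a single sequence in $\A$ along which all $|f_j|$ are small --- is exactly what requires the projective-closure machinery you mention but do not carry out. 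Since the paper outsources all of this to \cite{BDT}, none of it is needed here.
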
 
 
This cone corresponds to points at infinity of the set $\mathcal{D}$, and therefore condition (\ref{cone}) asserts that the sets $f^{-1}_j(0)$ do not have common points at infinity. Some properties of the cone at infinity of algebraic set are given in \cite{LP}. In particular, the authors show that it can be computed by means of Gr\"obner basis.  

In the case of a polynomial $p\in\CC[z_1,..,z_N]$, we have \[ \hat{p}^{-1}(0)=p^{-1}(0)^*. \]
Thus condition (\ref{cone}) is a generalization of (\ref{homogeneous}). As shown in Section \ref{6.1}, the growth exponent $\rfA$ is a generalization of the concept of the degree of a polynomial.

\begin{thm} \label{generalization}
	Let $\A\subset\CC^N$ be an irreducible algebraic set of positive dimension.  
	If \linebreak  $f=(f_1,...,f_M)\!:\A\rightarrow \CC^M$ \ is a non-constant holomorphic mapping on $\A$, $\B:=f(\A)$ is a locally irreducible analytic set and $k,\ell>0$, then the following conditions are equivalent:
	\vskip 2mm
	\begin{itemize}
		\item[(i)] $f$ is a polynomial mapping on $\A$ such that \ $\rfA\le \ell$ \ and \ $\lfA\ge k$, 
		\item[(ii)] $f$ is a proper mapping and, for every compact set $K\subset \B$, 
		\be \label{nier_uogolnienie} 
		k \ V_{f^{-1}(K),\A}\ \le \ V_{K,\B}\circ f \ \le \ \ell \ V_{f^{-1}(K),\A} \ \ \ \ \ on \ \ \A. 
		\ee 
	\end{itemize}	
	If in addition $k=\ell$, then either of two above conditions is equivalent to the following one
	\begin{itemize}
		\item[(iii)]  $f$ is proper, $\ro\!\!(f_1\!,\mathcal{A})=...=\ro\!\!(f_M\!,\mathcal{A})=\ell$, and
		\[ \bigcap_{j=1}^M f_j^{-1}(0)^* =\{0\}.  \] 
	\end{itemize} 	
\end{thm}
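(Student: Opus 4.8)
The plan is to assemble Theorem \ref{generalization} from the machinery already in place, treating each implication separately and reducing everything to the growth and Łojasiewicz exponents.

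\emph{The implication $(i)\Rightarrow(ii)$.} Assume $f$ is a polynomial mapping on $\A$ with $\rfA\le\ell$ and $\lfA\ge k>0$. Since $\lfA>0$, Theorem \ref{Ch_Kr} gives that $f$ is proper on $\A$. As $\A$ is irreducible, so is $\B=f(\A)$ by Theorem \ref{proper-map-thm} and \cite[VII.11.1]{Loj}; by hypothesis $\B$ is moreover locally irreducible. Now Theorem \ref{VK_z_gory_z_dolu} applies and yields, for every compact $K\subset\B$,
\[
\lfA\ V_{f^{-1}(K),\A}\ \le\ V_{K,\B}\circ f\ \le\ \rfA\ V_{f^{-1}(K),\A}\quad\text{on }\A.
\]
Since $V_{f^{-1}(K),\A}\ge0$ everywhere (it dominates the zero function), we may replace $\lfA$ by the smaller constant $k$ on the left and $\rfA$ by the larger constant $\ell$ on the right, obtaining (\ref{nier_uogolnienie}).

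\emph{The implication $(ii)\Rightarrow(i)$.} Suppose $f$ is proper and (\ref{nier_uogolnienie}) holds for all compact $K\subset\B$. From the right-hand inequality applied to $K=\B\cap\bar B(b,\delta)$ for any fixed $b\in\B$, $\delta>0$, Proposition \ref{converse_VK_z_gory} applies (its hypotheses are exactly that $f$ is a proper holomorphic mapping and (\ref{VK_z_gory_d}) holds with $d=\ell$): it gives that $f$ extends to a polynomial mapping on $\CC^N$ and $\rfA\le\ell$. With $f$ now known to be a proper polynomial mapping, the left-hand inequality applied to $K=\B\cap\bar B(b,\delta)$ puts us in the setting of Proposition \ref{converse_VK_z_dolu} with $d=k$, so $\lfA\ge k$. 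This establishes $(i)$.

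\emph{The case $k=\ell$: equivalence with $(iii)$.} Assume now $k=\ell$. If $(i)$ holds with these equal bounds, then $\rfA\le\ell\le\lfA$, while (\ref{lfV<rf}) gives $\lfA\le\rfA$; hence $\lfA=\rfA=\ell$ and $f$ is a proper polynomial mapping (as $\lfA>0$). Conversely, if $\lfA=\rfA$, then $(i)$ holds with $k=\ell$ equal to this common value. Thus, when $k=\ell$, condition $(i)$ is equivalent to ``$f$ is a non-constant holomorphic mapping on $\A$, $f(\A)$ is analytic, and $\lfA=\rfA$'', and Theorem \ref{bdt} identifies this in turn with: $f$ is a proper polynomial mapping on $\A$, $\ro(f_1,\A)=\dots=\ro(f_M,\A)$, and $\bigcap_{j=1}^M f_j^{-1}(0)^*=\{0\}$. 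Finally (\ref{osz_rfA}) forces the common value $\ro(f_j,\A)=\rfA=\ell$, which is exactly $(iii)$. The hypotheses needed to invoke Theorem \ref{bdt} ---that $\A$ is irreducible algebraic of positive dimension, $f$ is non-constant holomorphic, and $f(\A)$ is analytic--- are all present, the last because $\B=f(\A)$ is assumed here to be a (locally irreducible) analytic set.

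\emph{Main obstacle.} The substantive content is entirely imported: the equivalence $\rfA<\infty\Leftrightarrow f$ polynomial (Proposition \ref{equiv_polyn_map}), the properness criterion of Chądzyński--Krasiński (Theorem \ref{Ch_Kr}), the two-sided Green-function estimate (Theorem \ref{VK_z_gory_z_dolu}, itself resting on Lemma \ref{basic_lemma} and Theorem \ref{VKV=VK}), the converse propositions \ref{converse_VK_z_dolu} and \ref{converse_VK_z_gory}, and the Białożyt--Denkowski--Tworzewski description of the equality case (Theorem \ref{bdt}). The only real care required is bookkeeping: making sure the local irreducibility of $\B$ is available wherever Lemma \ref{basic_lemma} (and hence Theorem \ref{VK_z_gory_z_dolu}) is used, and that the ``$V\ge0$'' observation is the legitimate glue allowing one to pass between the sharp constants $\lfA,\rfA$ and the given $k,\ell$. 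No step is genuinely hard once the earlier results are granted.
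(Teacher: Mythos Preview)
Your proof is correct and follows essentially the same route as the paper: Theorem~\ref{Ch_Kr} plus Theorem~\ref{VK_z_gory_z_dolu} for $(i)\Rightarrow(ii)$, Propositions~\ref{converse_VK_z_gory} and~\ref{converse_VK_z_dolu} (in that order, so that $\B$ is known to be algebraic before the second is invoked) for $(ii)\Rightarrow(i)$, and Theorem~\ref{bdt} together with (\ref{osz_rfA}) for the equivalence with $(iii)$ when $k=\ell$. Your added remark that $V_{f^{-1}(K),\A}\ge 0$ justifies the passage from the sharp constants $\lfA,\rfA$ to $k,\ell$ is a point the paper leaves implicit; otherwise the arguments coincide.
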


\begin{proof}
From (i), we know that  $\lfA>0$ for any polynomial mapping $f$, which implies that $f$ is proper. By Theorem \ref{VK_z_gory_z_dolu}, inequality (\ref{nier_VK_z_gory_z_dolu}) is satisfied, and consequently, so is (\ref{nier_uogolnienie}). Assume now (ii). Applying Proposition \ref{converse_VK_z_gory}, we conclude that $f$ is a polynomial mapping on $\A$ and $\rfA\le \ell$. Proposition \ref{converse_VK_z_dolu} gives $\lfA\ge k$.

Condition (ii) and inequality (\ref{lfV<rf}) imply that $k\le \lfA \le \rfA \le \ell$. If $k=\ell$, then $\lfA=\rfA=\ell$, so by Theorem \ref{bdt} and (\ref{osz_rfA}), we obtain (iii). To prove the converse, observe that $\ro\!\!(f_1\!,\mathcal{A})=...=\ro\!\!(f_M\!,\mathcal{A})=\ell$, provided that $\rfA=\ell<\infty$ and $f$ is a polynomial mapping on $\A$. In view of Theorem \ref{bdt}, we get $\lfA=\rfA=\ell=k$, and so condition (i) is satisfied. 
\end{proof}	

\subsection{Examples} The assumption of local irreducibility in Theorem \ref{VK_z_gory_z_dolu} is only used so that Lemma \ref{basic_lemma} gives plurisubharmonicity of the push-forward functions. If weak plurisubharmonicity is sufficient, the theorem still applies as illustrated in the next example.

\begin{example}
Let $\mathcal{A}=\{(w,z)\in\mathbb{C}^2\,:\,w^3=z^2\}$. Clearly $(0,0)$ is the only singular point of $\mathcal{A}$. Condider the mapping 
\[
f:\mathbb{C}\ni\xi\longmapsto(\xi^2,\xi^3)\in\mathcal{A}. 
\]
Obviousy $\varphi$ is proper, $\mathcal{L}_\infty(f|\mathbb{C})=\:\ro\!(f,\mathcal{A})=3$. We can easily check that $f$ is surjective. Indeed, if $(w,z)\in\mathcal{A}$, then $\sqrt{w}=\{\xi,-\xi\}$ for some $\xi\in\mathbb{C}$. Moreover
$(\pm\xi^3)^2=w^3=z^2.$ Since $\sqrt{z^2}=\{z,-z\}$, we can choose the sign so that $\xi^3=z$. Consequently, $\mathcal{A}$ is irreducible as the set $\reg{\mathcal{A}}$ is connected being the image of $\mathbb{C}\setminus\{0\}$ via $f$. Now, let 
\[
E=\{(w,z)\in\mathcal{A}\,:\,|w|\leq 1,\ |z|\leq 1\}. 
\]  
Since $\sing{\A}\subset E$, weak plurisubharmonicity of the push-forwards of the relevant functions is enough, the proof of Theorem \ref{VK_z_gory_z_dolu} works in this case and we get
\[
\log^+|z|= V_{E,\mathcal{A}}(w,z)=\frac 3 2 \log^+|w|,\qquad (w,z)\in\mathcal{A}.
\]
\end{example}

Sometimes the assumption of irreducibility can be dropped as shown in the following example.

\begin{example}
Let $p,q$ be two non-constant polynomials of one complex variable such that $p(0)=q(0)=0$. Consider $\mathcal{A}=\{(w,z)\in\mathbb{C}^2\,:\,wz=0\}$ and the proper polynomial mapping
\[
f:\mathcal{A}\ni(w,z)\longmapsto p(w)+q(z)\in\mathbb{C}.
\]
It is easy to see that
\[
\rfa=\lfA=\min(\deg p,\deg q)\quad\text{and}\quad \rfA=\max(\deg p,\deg q).
\]
In particular, if $d=\deg p=\deg q$ and $E=(p^{-1}(\Delta)\times\{0\})\cup(\{0\}\times q^{-1}(\Delta))$, where $\Delta$ denotes the closed unit disk, 
then
\begin{equation}\label{cross_set_Green}
V_{E,\mathcal{A}}(w,z)=\frac{1}{d}\log^+|p(w)+q(z)|,\qquad (w,z)\in\mathcal{A}.
\end{equation}
Indeed, while $\mathcal{A}$ is not irreducible, it is the union of two complex lines $\mathbb{C}\times\{0\}$ and $\{0\}\times\mathbb{C}$ and Lemma \ref{basic_lemma} applies to the restrictions of $f$ to these lines, i.e. to $p$ and $q$. 
If $u\in\mathcal{L}(\mathcal{A})$, then -- using the push-forward notation -- we have:
\[
\frac 1 d f[u]=\frac 1 d \max(p[u],q[u])\in\mathcal{L}(\mathbb{C}),
\]
which implies (\ref{cross_set_Green}), just like in Theorem \ref{VK_z_gory_z_dolu}.
\end{example}

\vskip 2mm

Our paper concludes with two straightforward examples in which we can use Theorem \ref{VK_z_gory_z_dolu} in order to obtain a Bernstein-Walsh polynomial inequalities on algebraic sets.

\begin{example}
The algebraic set 
\[
\mathcal{A}=\{(x,y,z)\in\mathbb{C}^3\,:\,x^2+y^2+z^2=1\}
\]
is a two-dimensional connected complex submanifold of $\mathbb{C}^3$. Let $f=(f_1,f_2):\mathbb{C}^2\longrightarrow\mathbb{C}^2$
be a non-constant polynomial mapping whose homogenous component of highest degree $\hat{f}$ satisfies the condition
$\hat{f}^{-1}((0,0))=\{(0,0)\}$. Let $\pi(x,y,z)=(x,y)$. Note that
\[
\mathcal{L}_\infty(f\circ\pi|\mathcal{A})=\rho(f\circ\pi,\mathcal{A})=\deg f.
\]
Consider the polynomial polyhedron 
\[
E=\{(x,y,z)\in\mathcal{A}\,:\,|f_1(x,y)|\leq 1,\ |f_2(x,y)|\leq 1\}=(f\circ \pi)^{-1}(\Delta \times \Delta).
\]
According to Theorem \ref{VK_z_gory_z_dolu}
\[
V_{E,\mathcal{A}}(x,y,z)=\frac{1}{\deg f}\max\{
\log^+|f_1(x,y)|,\log^+|f_2(x,y)|
\},\qquad (x,y,z)\in\mathcal{A}.
\]
In this case, the Bernstein-Walsh inequality tells us that for any non-constant polynomial $p$ we have the estimate
\[ 
|p(x,y,z)| \le \max\{|f_1(x,y)|,|f_2(x,y)|\}^{\ro(p,\A)/\deg f} \|p\|_E \ \ \ \ \mbox{for} \ \ \ (x,y,z)\in \A\setminus E.
\]
\end{example}

\begin{example}
Let \[ \A=\{(x,y,z)\in\CC^3\: : \: x^2+y^2+z^2=4, \ x^2+y^2=2x\}=
\{(x,y,z)\in\mathbb{C}^3\,:\,z^2=4-2x,\ y^2=2x-x^2\},\]  
\[E=\{(x,y,z)\in \CC^3 \: : \: x\in [0,2], \ x^2+y^2+z^2=4, \ x^2+y^2=2x\}.\] The set $E$ is usually cold the {\it Viviani's window} and is a real curve. Consider first 
\[ f: \CC \ni t \longmapsto (t, 2t-t^2,4-2t)\in \B:=\{(u,v,w)\in\CC^3\: : \: u^2+v+w=4, \ u^2+v=2u\}\]
and $K=f([0,2])$. Observe that $f$ is proper, \ $f(\CC)=\B$ and $\B$ is an irreducible algebraic set because is given by a polynomial parametrization (see e.g. \cite[Prop.$\:$4, Chap.$\:$4, $\mathsection$ 5]{CLO}). 
Moreover, \ $\ro\!\!(f\!,\CC)=\mathcal{L}_\infty(f|\CC)=4$ \ and \ $f^{-1}(K)=[0,2]$. \ 
Note that  the set 
\[
\mathcal{A}':=\{(x,y)\in\mathbb{C}^2\,:\,y^2=2x-x^2\}=\{(x,y)\in\mathbb{C}^2\,:\,(x-1)^2+y^2=1\}
\]
is irreducible, being a connected complex submanifold of $\mathbb{C}^2$. Therefore, also 
\[
\mathcal{A}=\{(x,y,z)\in\mathbb{C}^3\,:\,(x,y)\in\mathcal{A}',\ z^2=4-2x\}
\]
is irreducible, since $z^2-4+2x$ is irreducible.
Note also that the set 
\[
\mathcal{B}':=\{(u,v)\in\mathbb{C}^2\,:\,v=2u-u^2\}
\]
is locally irreducible in $\mathbb{C}^2$ as the graph of a quadratic function.
Consequently, $\mathcal{B}$ is also locally irreducible, since it is the "lifting"
of $\mathcal{B}'$ to the plane $\{(u,v,w)\in\mathbb{C}^3\,:\,w=4-2u\}$, namely
\[
\mathcal{B}=\{(u,v,w)\in\mathbb{C}^3\,:\,(u,v)\in\mathcal{B}',\ w=4-2u\}.
\]
Now, by Theorem \ref{VK_z_gory_z_dolu}, we have 
\[ V_{K,\B}\circ f = 4 \, V_{f^{-1}(K)} = 4 \, V_{[0,2]}\ \ \ \mbox{on} \ \ \ \CC. \]
On the other hand, the mapping
\[ g: \A \ni (x,y,z) \longmapsto (x,y^2,z^2) \in \B\]
is surjective, proper and $E=g^{-1}(K)$. Additionally, we can easily check that \ $\ro\!\!(g\!,\A)=\mathcal{L}_\infty(g|\A)=2$ \ and \ $g(x,y,z)=f(x)$ \ for all $(x,y,z)\in \A$. Once again we use Theorem \ref{VK_z_gory_z_dolu} and for any $(x,y,z)\in \A$ we get
\[ V_{E}(x,y,z) = V_{g^{-1}(K),\A}(x,y,z) = \tfrac12\, V_{K,\B} (g(x,y,z)) = \tfrac12\, V_{K,\B} (f(x))\] \[= 2\, V_{[0,2]}(x) = 2\, \log\left|x-1+\sqrt{x^2-2x}\right|, \]
the last equality being a consequence of the classical formula for the pluricomplex Green's function for $[-1,1]$. Since $\A$ and $\B$ do not have singular points, the same formulas are true for the functions $U_{g^{-1}(K),\A}$ and $U_{K,\B}$.

It is worth noticing here that every polynomial $p\in \CC[x,y,z]$ restricted to $\A$ can be written in the form \ $p(x,y,z) = p_1(x) + p_2(x)\,y + p_3(x)\, z+ p_4(x)\,yz$ \ where $p_1,\,p_2,\,p_3,\, p_4$ are polynomials of one complex variable $x$. 
Taking into account the Siciak-Zaharjuta result (\ref{Siciak-Za}), for any non-constant polynomial $p$ in the above form we have the following Bernstein-Walsh inequality on $\A$ for the Viviani's window $E$
\[ |p(x,y,z)| \le \left|x-1+\sqrt{x^2-2x}\right| ^{2\ro(p,\A)} \|p\|_E \ \ \ \ \mbox{for} \ \ \ (x,y,z)\in \A\setminus E\]
and the estimate is asymptotically optimal as $\ro\!\!(p,\A)$ \ tends to infinity.
\end{example}

\vskip 2mm

\noindent {\bf Acknowledgement.} We wish to thank Maciej Denkowski for useful discussions and for answering in \cite{BDT} a question posed by the first author during a conference presentation in Łódź, January 2020.

\end{document}